\documentclass[11pt]{amsart}   
\usepackage[english]{babel}
\usepackage{mathtools}
\usepackage{amssymb,amscd, graphics,color,latexsym,cancel}   
\usepackage[linktoc=all, pagebackref, hyperindex]{hyperref}%
\hypersetup{colorlinks,  citecolor=blue,  filecolor=blue,  linkcolor=blue,  urlcolor=black}

\usepackage{tikz-cd} 
\usepackage{extpfeil}
\usepackage{tikz}
\usetikzlibrary{matrix,calc}
\usepackage{mathrsfs}
\usepackage[all]{xy}
\usepackage{amsfonts}
\usepackage[normalem]{ulem}
\usepackage{euscript}
\usepackage{amsmath}
\usepackage{ifpdf}
\usepackage{cleveref}
\LARGE\textwidth=6in
\newcommand{\rar}{\rightarrow}
\newcommand{\lar}{\longrightarrow}

\newtheorem{mthm}{Main Theorem}
\newtheorem{Theorem}{Theorem}[section]
\newtheorem{Lemma}[Theorem]{Lemma}
\newtheorem{Corollary}[Theorem]{Corollary}
\newtheorem{Proposition}[Theorem]{Proposition}

\theoremstyle{definition}
\newtheorem{Remark}[Theorem]{Remark}
\newtheorem{Example}[Theorem]{Example}

\newtheorem{Definition}[Theorem]{Definition}
\newtheorem{Question}[Theorem]{Question}
\def\sqr#1#2{{\vcenter{\hrule height.#2pt
			\hbox{\vrule width.#2pt height#1pt \kern#1pt
				\vrule width.#2pt}
			\hrule height.#2pt}}}
\def\phi{\varphi}

\def\VaVa{{\mathcal V}\kern-5pt {\mathcal V}}
\def\gr#1#2{{\rm gr}\, _{#1}(#2)}
\def\gr{{\rm gr}\,}
\def\hht{{\rm ht}\,}
\def\depth{{\rm depth}\,}
\def\ass{{\rm Ass}\,}
\def\Min{{\rm Min}\,}
\def\codim{{\rm codim}\,}
\def\spec{{\rm Spec}\,}

\def\ker{{\rm ker}\,}
\def\grade{{\rm grade}\,}
\def\rk{\rm rank}

\def\syz{\mbox{\rm Syz}}
\def\cok{\mbox{\rm coker}}

\def\To#1#2#3#4{{\rm Tor}\,^{#1}_{#2}({#3},{#4})}
\def\Ext#1#2#3#4{{\rm Ext}\,^{#1}_{#2}({#3},{#4})}
\def\spec#1{{\rm Spec}\, (#1)}
\def\proj#1{{\rm Proj}\, (#1)}
\def\supp#1{{\rm Supp}\, (#1)}

\def\ini{\mbox{\rm in}}

\def\der#1{\mbox{\rm der}_k(#1)}

\def\cl#1{{\mathcal #1}}

\def\Fitt{\text{Fitt}}

\def\phi{\varphi}

\def\hht{{\rm ht}\,}
\def\grade{{\rm grade}\,}

\def\gg{{\bf g}}

\def\ZZ{{\bf Z}}
\def\ff{{\bf f}}

\def\ff{{\bf f}}

\def\fm{{\mathfrak m}}
\def\fn{{\mathfrak n}}

\def\fp{{\mathfrak p}}
\def\fq{{\mathfrak q}}

\def\fm{{\mathfrak m}}
\def\fn{{\mathfrak n}}

\def\cl#1{{\cal #1}}
\def\rk{\rm rank}

\newcommand{\excise}[1]{}

\def\NZQ{\mathbb}               

\def\ZZ{{\NZQ Z}}

\def\PP{{\NZQ P}}

\def\Jc{{\mathcal J}}

\def\G{{\mathcal G}}

\def\Q{{\mathcal Q}}

\def\N{{\mathcal N}}
\def\opn#1#2{\def#1{\operatorname{#2}}} 
\opn\chara{char} \opn\length{\lambda} \opn\pd{pd} \opn\rk{rk}
\opn\projdim{proj\,dim} \opn\injdim{inj\,dim} \opn\rank{rank}
\opn\depth{depth} \opn\grade{grade} \opn\height{height}
\opn\embdim{emb\,dim} \opn\codim{codim}

\opn\Tr{Tr} \opn\bigrank{big\,rank}
\opn\superheight{superheight}\opn\lcm{lcm}
\opn\trdeg{tr\,deg}
	\opn\reg{reg} \opn\lreg{lreg} \opn\ini{in} \opn\lpd{lpd}
	\opn\size{size} \opn\sdepth{sdepth}
	\opn\link{link}\opn\fdepth{fdepth}\opn\lex{lex}
	\opn\tr{tr}
	\opn\type{type}
	\opn\div{div} \opn\Div{Div} \opn\cl{cl} \opn\Cl{Cl}
	\opn\Spec{Spec} \opn\Supp{Supp} \opn\supp{supp} \opn\Sing{Sing}
	\opn\Ass{Ass} \opn\Min{Min}\opn\Mon{Mon}
	\opn\Ho{H}  \opn\indeg{indeg} \opn\Hs{HS}
	\opn\Ann{Ann} \opn\Rad{Rad} \opn\Soc{Soc}  \opn\der{Der}
	\opn\Bour{Bour} 
	\opn\Im{Im} \opn\Ker{Ker} \opn\Coker{Coker} \opn\Am{Am}
	\opn\Hom{Hom} \opn\Tor{Tor} \opn\Ext{Ext} \opn\End{End}
	\opn\Aut{Aut} \opn\id{id}
	
	\opn\nat{nat}
	\opn\pff{pf}
	\opn\Pf{Pf} \opn\GL{GL} \opn\SL{SL} \opn\mod{mod} \opn\ord{ord}
	\opn\Gin{Gin} \opn\Hilb{HP}\opn\sort{sort}
	\opn\PF{PF}\opn\Ap{Ap}
	\opn\aff{aff} \opn
	\con{conv} \opn\relint{relint} \opn\st{st}
	\opn\lk{lk} \opn\cn{cn} \opn\core{core} \opn\vol{vol}  \opn\inp{inp} \opn\nilpot{nilpot}
	\opn\link{link} \opn\star{star}\opn\lex{lex}\opn\set{set}
	\opn\width{wd}
	\opn\Fr{F}
	\opn\QF{QF}
	\opn\G{G}
	\opn\type{type}\opn\res{res}
	\opn\log{Log}
	\opn\gr{gr}   
	
	\def\pot#1#2{#1[\kern-0.28ex[#2]\kern-0.28ex]}

	\opn\dirlim{\underrightarrow{\lim}}
	\opn\inivlim{\underleftarrow{\lim}}
\begin{document}
		
		\title[The Bourbaki degree of a matrix]{The Bourbaki degree of the  syzygy module of  2 $\times$ 4   matrices} 
		\author{Marcos Jardim}
		\address{Universidade Estadual de Campinas (UNICAMP) \\ Instituto de Matem\'atica, Estat\'{\i}stica e Computação Cient\'{\i}fica (IMECC) \\ Departamento de Matem\'atica \\
			Rua S\'ergio Buarque de Holanda, 651\\ 13083-970 Campinas-SP, Brazil}
		\email{jardim@unicamp.br}
				\author{Felipe  Monteiro}
		\address{Universidade Estadual de Campinas (UNICAMP) \\ Instituto de Matem\'atica, Estat\'{\i}stica e Computação Cient\'{\i}fica (IMECC) \\ Departamento de Matem\'atica \\
			Rua S\'ergio Buarque de Holanda, 651\\ 13083-970 Campinas-SP, Brazil \newline \indent
        Universit\'e Bourgogne Europe (UBE), CNRS, Institut de Mathématiques de Bourgogne UMR 5584, F-21000 Dijon, France}
		\email{ffcmonteiro00@gmail.com}
		\author{Abbas Nasrollah Nejad}
		\address{Department of Mathematics, Institute for Advanced Studies in Basic Sciences (IASBS), Zanjan 45137-66731, Iran}
		\email{abbasnn@iasbs.ac.ir}
		
		\subjclass[2020]{Primary: 13A02, 13D02, 13H15. Secondary: 14B05, 14H20, 14H50}   	
		
		\keywords{Bourbaki ideal, graded free resolution, Buchsbaum--Rim complex, free divisor, codimension one distributions on projective spaces}

		\begin{abstract}
		We introduce and study the Bourbaki degree as a numerical invariant for \(2 \times 4\) matrices $\Theta$ of homogeneous polynomials over a polynomial ring \(R = k[x_1, \dots, x_n]\). This invariant, defined via a Bourbaki sequence for the syzygy module \(\operatorname{Syz}(\Theta)\), generalizes previous constructions for plane curves and Jacobian matrices. Our main result is an explicit formula expressing the Bourbaki degree in terms of the degrees of the rows, the initial degree of a syzygy, and the first two Hilbert coefficients of the cokernel module \(\mathcal{Q} = \operatorname{coker}(\Theta)\). We apply this framework to two important cases. First, matrices with constant first row, which are determined by a three-equigenerated ideal \(J = (f_1, f_2, f_3)\), where we show the Bourbaki degree measures how far \(J\) is from being a perfect ideal, and we completely characterize its smaller and larger values. Second, for a linear matrix, we use the Kronecker--Weierstrass classification to determine all possible Bourbaki degrees and homological types. This classification reveals the existence of a linear matrix with Bourbaki degree equal to 2, a value that does not occur for Jacobian matrices. Finally, in the geometric context of \(\mathbb{P}^3\), we provide a sufficient condition for \(\operatorname{Syz}(\Theta)\) to define a codimension one distribution and obtain bounds on the Bourbaki degree when the initial degree is small.
		\end{abstract}
		\maketitle
    		
    	\begingroup
            \hypersetup{linkcolor=black}
        \endgroup
		
		\section*{Introduction}

The logarithmic tangent sheaves and differentials with logarithmic poles on divisors have been studied since the foundational works by Deligne \cite{Deligne1970} and Saito \cite{Saito}. This has given rise to a wide range of problems and approaches at the intersection of combinatorics, commutative algebra, algebraic geometry, and complex analysis. Over projective spaces, these objects may be defined as the kernel of a gradient vector $\nabla(f)$ for a homogeneous polynomial $f \in k[x_1, \ldots, x_n]$. This construction relates the singularities of the divisor $V(f)$ and the associated module of logarithmic differentials, or equivalently, the module of syzygies for the matrix $\nabla(f)$. 

The homological point of view relates the structure of the singularities of $V(f)$ to properties of the resolution of the Jacobian ideal $J_f = \langle \partial_1 f, \ldots, \partial_n f \rangle$. This perspective appears in many articles
\cite{dimca1992singularities,dimca2020plane,faenzi2022stability,faenzi2014logarithmic,terao1981generalized}. The simplest possible resolution occurs when the syzygy module is free, and divisors with this property are called \emph{free}. This classical framework has been recently adapted to include complete intersections of codimension $>1$ \cite{Faenzi2025} and toric varieties \cite{FJM24}.

In another direction, Jardim, Nejad, and Simis \cite{MAA} introduced the notion of the \textit{Bourbaki degree} for plane curves (case $n=3$), a discrete invariant that vanishes precisely for free curves, and measures how \textit{non-free} a plane curve is. { For recent developments and applications of this invariant, see \cite{RZAS,dimca2025some, Pokora-Dimca-Abe} }This construction is based on classical \textit{Bourbaki sequences} \cite{bourbaki1965diviseurs}, which relates modules to ideals via a choice of a free submodule, and has been used in several contexts (\cite{dimca2020jumping,evans1976bourbaki,herzog2021graded}). It is therefore natural to seek applications of this construction to the context of hypersurfaces (see \cite{DimcaSticlaru24}) and complete intersection curves (done by Monteiro in \cite{monteiro2025} for in $\mathbb{P}^3$).

In this paper, we generalize and unify these approaches by studying an arbitrary $2\times4$ matrix $\Theta$ of homogeneous polynomials over the polynomial ring $R=k[x_1, \ldots, x_n]$ defining a morphism of graded $R$-modules 
$$ \Theta ~:~ R^4 \longrightarrow R(d_1) \oplus R(d_2), $$
without assuming, as in \cite{Faenzi2025,monteiro2025}, that the lines of $\Theta$ are gradients of homogeneous polynomials. Then  $\syz(\Theta) \doteq \Ker(\Theta)$ is the \textit{syzygy module} of the matrix $\Theta$; we assume that $\rk(\syz(\Theta))=2$.

The matrix $\Theta$ is said to be \textit{free} if $\syz(\Theta)=R(-e_1)\oplus R(-e_2)$ for some non-negative integers $e_1$ and $e_2$.

Otherwise, set $e=\indeg(\syz(\Theta))$, and choose a homogeneous syzygy $\nu \in \syz(\Theta)$ of degree $e$; it induces an injective morphism $R(-e)\stackrel{\nu}{\hookrightarrow}\syz(\Theta)$ whose cokernel is a torsion-free module of rank one, hence isomorphic to an ideal $I_\nu$ of $R$, up to grading; that is $\cok(\nu)\simeq I_\nu(s)$. The \textit{Bourbaki degree} of $\Theta$, denoted $\Bour(\Theta)$, is defined to be the degree of $R/I_\nu$; see further details in Section \ref{sec:Bourbaki-degree}.

In addition, let $\Q\doteq\cok(\Theta)$; its Hilbert polynomial can be written as follows:
$$ \mathrm{HP}_\Q(t) = \dfrac{e_0(\Q)}{(n-2)!}t^{n-2} + \dfrac{e_0(\Q)d-2e_1(\Q)}{2(n-3)!}t^{n-3} + \mbox{(lower order terms in } t\mbox{)}. $$
The coefficients $e_0(\Q)$ and $e_1(\Q)$ will play an important role in this paper; note that $e_0(\Q)$ is a non-negative integer, while $e_1(\Q)\in\ZZ$; in addition, $e_0(\Q)=e_1(\Q)=0$ when $\dim(\Q)\le n-3$.

\begin{mthm}
Let $R=k[x_1,\ldots,x_n]$ be a polynomial ring over an infinite field $k$ and $\Theta$ be a $2\times 4$ matrix of rank $2$ in $R$ whose first and second rows consist of homogeneous polynomials of degrees $d_1$ and $d_2$, respectively. Let $\nu \in \syz(\Theta)$ be a minimal homogeneous generator of degree $e\geq 1$. If $\Theta$ is not free, then  
\begin{equation}\label{BourDeg-intro}
\Bour(\Theta)= \left\{ \begin{array}{ll}
(e-d)(e+e_0(\Q))+\fq_{\Theta}+\ell_\Q+e_1(\Q), & \textrm{if } e_0(\Q) \neq 0; \\
e(e-d)+\fq_{\Theta}-e_1(\Q), & \textrm{if } e_0(\Q) = 0.
\end{array}\right.
\end{equation}
where $d=d_1+d_2$ , $\fq_{\Theta}=d_1^2+d_2^2+d_1d_2$,  
$ \ell_\Q= \tfrac{1}{2}\bigl(e_0(\Q)^2+e_0(\Q)\bigr)$. Furthermore, $\Bour(\Theta)=\fq_{\Theta}$ when $\dim(\Q)\le n-3$, and the minimal free graded resolution of $\Q$ is a Buchsbaum--Rim complex.
\end{mthm}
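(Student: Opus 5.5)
The plan is to compare Hilbert polynomials along two short exact sequences and read off $\Bour(\Theta)$ from the coefficient of $t^{n-3}$. Write $\mathrm{HP}_{R}(t)=\binom{t+n-1}{n-1}$. The first sequence is the presentation
$$0\lar \syz(\Theta)\lar R^4\stackrel{\Theta}{\lar} R(d_1)\oplus R(d_2)\lar \Q\lar 0,$$
which gives $\mathrm{HP}_{\syz(\Theta)}(t)=4\mathrm{HP}_R(t)-\mathrm{HP}_R(t+d_1)-\mathrm{HP}_R(t+d_2)+\mathrm{HP}_\Q(t)$. The second is the Bourbaki sequence $0\to R(-e)\stackrel{\nu}{\to}\syz(\Theta)\to I_\nu(s)\to 0$, which gives
$$\mathrm{HP}_{\syz(\Theta)}(t)=\mathrm{HP}_R(t-e)+\mathrm{HP}_R(t+s)-\mathrm{HP}_{R/I_\nu}(t+s).$$
Since $\Theta$ is not free, I will use the facts established in Section \ref{sec:Bourbaki-degree}: $I_\nu$ is a proper ideal of height $2$ (an unmixed height-one part would contradict $e=\indeg$ and the minimality of $\nu$). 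Hence $\mathrm{HP}_{R/I_\nu}(t)=\frac{\Bour(\Theta)}{(n-3)!}t^{n-3}+\cdots$.

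The key steps, in order, are as follows.
\begin{enumerate}
\item Expand $\mathrm{HP}_R(t+a)=\frac{1}{(n-1)!}\bigl(t^{n-1}+(n-1)(a+\tfrac n2)t^{n-2}+\binom{n-1}{2}\bigl(a^2+na+c_n\bigr)t^{n-3}+\cdots\bigr)$, where the constant $c_n$ does not depend on $a$.
\item Equate the two expressions for $\mathrm{HP}_{\syz(\Theta)}$ in degree $t^{n-1}$. This holds trivially, since both sides have rank $2$.
\item Equate in degree $t^{n-2}$. The linear terms give $-e+s=-d+e_0(\Q)$, so $s=e-d+e_0(\Q)$. This is just the first Chern class identity $c_1(\syz(\Theta))=-d+e_0(\Q)$, with $\Q$ contributing the degree of its divisorial support.
\item Equate in degree $t^{n-3}$. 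After multiplying by $(n-3)!$ the equation reads
$$\tfrac12 e^2+\tfrac12 s^2-\Bour(\Theta)+(\text{linear terms})=-\tfrac12(d_1^2+d_2^2)+(\text{linear terms})+\bigl(\text{$t^{n-3}$-coefficient of }\mathrm{HP}_\Q\bigr).$$
The constants $c_n$ cancel because both sides have total multiplicity $2$. The terms linear in $a$ cancel by the $t^{n-2}$ identity, up to a multiple of $e_0(\Q)$. The $t^{n-3}$-coefficient of $\mathrm{HP}_\Q$ is $\frac{e_0(\Q)d-2e_1(\Q)}{2}$.
\end{enumerate}

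For $e_0(\Q)=0$ we have $s=e-d$, and step 4 yields
$$\Bour(\Theta)=\tfrac12e^2+\tfrac12(e-d)^2+\tfrac12(d_1^2+d_2^2)-e_1(\Q)=e(e-d)+\fq_\Theta-e_1(\Q),$$
since $d^2+d_1^2+d_2^2=2\fq_\Theta$. For $e_0(\Q)\neq0$, substitute $s=e-d+e_0(\Q)$ and keep the extra terms. These are $\tfrac12 s^2-\tfrac12(e-d)^2=(e-d)e_0(\Q)+\tfrac12e_0(\Q)^2$, together with the extra linear term $\tfrac n2 e_0(\Q)$, which must combine with the $\tfrac12 e_0(\Q)d$ coming from $\Q$ and with the shift conventions to produce $\ell_\Q=\tfrac12(e_0(\Q)^2+e_0(\Q))$ and the sign $+e_1(\Q)$. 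The main obstacle is this bookkeeping. The sign flip of $e_1(\Q)$ between the two cases is suspicious, and I expect it comes from the normalization of the $t^{n-3}$ coefficient of $\mathrm{HP}_\Q$ when $\dim\Q=n-1$ versus $\dim\Q\le n-2$. So I would redo step 4 carefully, writing $\mathrm{HP}_\Q$ in the binomial basis $\binom{t+n-2}{n-2},\binom{t+n-3}{n-3}$ so that $e_0(\Q)$ and $e_1(\Q)$ appear as the standard Hilbert coefficients. I would also check the result against a diagonal example, such as a matrix with a common factor in one row.

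For the final claim, suppose $\dim\Q\le n-3$. Then $\Q$ is supported in codimension $\ge 3=4-2+1$, i.e. $\grade I_2(\Theta)\ge 3$, which is the generic value. By the Buchsbaum--Eisenbud acyclicity criterion, the Buchsbaum--Rim complex of $\Theta$ is exact, so it is a minimal free resolution of $\Q$. Its second map shows that $\syz(\Theta)$ is generated by the four minors-type syzygies coming from $\wedge^3R^4$, all of degree $d_1+d_2$. Hence $e=d$, and the case $e_0(\Q)=e_1(\Q)=0$ of the formula gives $\Bour(\Theta)=\fq_\Theta$.
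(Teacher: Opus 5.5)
Your strategy is the paper's own. The proof of Theorem~\ref{Bourideal} applies additivity of Hilbert polynomials to the bottom row $0\to I_\nu(s)\to R^4/R(-e)\to \N\to 0$ of the Bourbaki diagram, which is equivalent to equating your two expressions for $\mathrm{HP}_{\syz(\Theta)}$. It reads $s=e-d+e_0(\Q)$ off the $t^{n-2}$ coefficient and $\Bour(\Theta)$ off the $t^{n-3}$ coefficient, and your last paragraph is Proposition~\ref{BourM-q}.

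\textbf{The gap: the main case is never derived.} You do not actually obtain the formula when $e_0(\Q)\neq 0$. The obstacle you anticipate comes from a misread input, not from genuine bookkeeping. The $t^{n-3}$ coefficient of $\mathrm{HP}_\Q$, multiplied by $(n-3)!$, is $\tfrac12\bigl((n-1)e_0(\Q)-2e_1(\Q)\bigr)$. It comes from $e_0(\Q)\binom{t+n-2}{n-2}-e_1(\Q)\binom{t+n-3}{n-3}$, so the factor multiplying $e_0(\Q)$ is the dimension $n-1$. In your step 4 you wrote $d$ there, which everywhere else in your proposal means $d_1+d_2$. With $d_1+d_2$ the terms linear in $e_0(\Q)$ do not collapse, and $\ell_\Q$ cannot appear. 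With the correct coefficient, your step 4 becomes
\[
2\Bour(\Theta)=e^2+s^2+d_1^2+d_2^2+n\,e_0(\Q)-(n-1)e_0(\Q)+2e_1(\Q)=e^2+s^2+d_1^2+d_2^2+e_0(\Q)+2e_1(\Q).
\]
Now substitute $s^2=(e-d)^2+2(e-d)e_0(\Q)+e_0(\Q)^2$ and $e^2+(e-d)^2+d_1^2+d_2^2=2e(e-d)+2\fq_\Theta$. This gives exactly $(e-d)(e+e_0(\Q))+\fq_\Theta+\ell_\Q+e_1(\Q)$, which is precisely the paper's computation.

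\textbf{The sign of $e_1(\Q)$.} The sign ``flip'' is not a normalization phenomenon; there is a single formula. With the coefficient you yourself wrote, setting $e_0(\Q)=0$ makes the $t^{n-3}$ coefficient of $\mathrm{HP}_\Q$ equal to $-e_1(\Q)$. Your equation then yields $\Bour(\Theta)=e(e-d)+\fq_\Theta+e_1(\Q)$, not $e(e-d)+\fq_\Theta-e_1(\Q)$, so your $e_0(\Q)=0$ line silently changed convention. The $-e_1(\Q)$ in the statement reflects the paper's convention, stated after \eqref{BourDeg}. When $e_0(\Q)=0$, the symbol $e_1(\Q)$ denotes the positive leading coefficient $\sum_{\fp}\length(\Q_\fp)\deg(R/\fp)$ over codimension-two primes, i.e.\ minus the standard second Hilbert coefficient. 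You must make this convention explicit to obtain the second line of the statement from the first.

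\textbf{Minor point.} In your last paragraph, the acyclicity criterion gives exactness of the Buchsbaum--Rim complex. Minimality additionally requires all entries to lie in $\fm$, which holds for $d_1\ge 1$. For $\Bour(\Theta)=\fq_\Theta$ you only need $e=d$, which you correctly get from the degree-$d$ generators.
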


We also observe in Remark \ref{Bourbounds} that  
\[
\fq_\Theta + \ell_{\Q} + e_1(\Q) - \frac{(d + e_0(\Q))^2}{4} \le
\Bour(\Theta) \leq \fq_\Theta + \ell_{\Q} + e_1(\Q),
\]
where the lower bound is attained when $e = \frac{d - e_0(\Q)}{2}$, while the upper bound is attained when $e=d$. However, fixed $d_1$ and $d_2$, not all intermediate values in the above interval are attained by some matrix $\Theta$; in fact, when $n=3$, there is no Jacobian matrix $\Theta$ with $d_1=d_2=1$ such that $\Bour(\Theta)=2$. The existence of gaps for the value of $\Bour(\Theta)$ is reminiscent of the gaps for the Bourbaki degree for plane algebraic curves of degrees $3$ and $4$ found in \cite[Section 5]{Futata2023}.


{
We then turn to two important special cases. The first is the case where the first row of $\Theta$ has degree $0$ and the second row has degree $d>0$, which leads to the study of three--equigenerated ideals and to Main Theorem~\ref{BourJ}. In this setting, the Bourbaki degree introduced in~\cite{MAA} for the Jacobian ideals of reduced plane curves appears as a special case.
}
{ \begin{mthm}\label{BourJ}
Let $J=(f_1,f_2,f_3)\subset R=k[x_1,\ldots,x_n]$ be a three-equigenerated ideal with $\deg(f_i)=d$ and $\gcd(f_1,f_2,f_3)=1$, and let
$e=\indeg(\syz(J))$. If $J$ is an almost complete intersection, then
\[
\deg(R/J)+\Bour(J)=d^2+e^2-ed.
\]
Moreover, the following hold:
\begin{enumerate}
\item $\Bour(J)=0$ if and only if $J$ is perfect.
\item $\Bour(J)=1$ if and only if the Bourbaki ideal is  a complete intersection generated by two independent linear forms.
\item $\Bour(J)=2$ if and only if the Bourbaki ideal is either the intersection of two codimension-two linear primes, or a codimension-two linear primary ideal of multiplicity two, or a complete intersection of type $(1,2)$.
\item $\Bour(J)=d^2-1$ if and only if $e=d$ and $\deg(R/J)=1$.
\item $\Bour(J)=d^2$ if and only if $J$ is a complete intersection.
\end{enumerate}
Assume furthermore that $\syz(J)$ is locally free on the punctured spectrum. Then
\[
\Bour(J)\le e^2 \quad \mbox{and }\quad  \ d(d-e)\le \deg(R/J)\le d^2+e^2-ed.
\]
In the quadratic case, if $J$ has height two and $\deg(f_i)=2$, then
\begin{enumerate}
\item[{\rm (a)}] if $n=3$, then $\Bour(J)\neq 2$;
\item[{\rm (b)}] if $n\geq 4$, $\Theta$ is locally free, and $J$ is saturated, then $\Bour(J)\neq 2$.
\end{enumerate}
\end{mthm}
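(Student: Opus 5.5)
The plan is to specialize Main Theorem~1 to the matrix $\Theta$ with constant first row and second row $(f_1,f_2,f_3,\cdot)$. Alternatively, and more transparently, I would work directly with the Bourbaki sequence
$$0\to R(-e)\xrightarrow{\nu}\syz(J)\to I_\nu(e-2d)\to 0,$$
which follows from $\rk\syz(J)=2$ and $\det$-type considerations: $c_1(\syz J)=-2d$ for three generators of degree $d$.

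\emph{The displayed formula.} Combine the resolution $0\to\syz(J)\to R(-d)^3\to R\to R/J\to 0$ with the Bourbaki sequence and compare the Hilbert series. For the almost complete intersection, $J$ has height two, so $R/J$ has dimension $n-2$. I compare the coefficients of $t^{n-2}$ in $\Hs$, i.e.\ the second-order terms of the numerators at $t=1$ (equivalently, Chern-class style bookkeeping of $c_2$):
\begin{itemize}
\item $R(-d)^3-R$ contributes $3d^2/2$;
\item $R(-e)$ contributes $e^2/2$;
\item $I_\nu(e-2d)$ contributes $(2d-e)^2/2$ minus $\deg R/I_\nu$.
\end{itemize}
Subtracting gives $\deg R/J+\Bour(J)=3d^2/2\cdot\ldots$, which simplifies to $d^2+e^2-ed$. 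This is the main computation, and I would check it on a complete intersection $(f_1,f_2,0)$: there $e=d$ and $\Bour=0$ fails. The correct check is the complete intersection ideal with a redundant generator.

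\emph{Items (1)--(5).}
\begin{itemize}
\item Item (1): $\Bour=0$ iff $I_\nu=R$ iff $\syz(J)$ is free iff $\pd R/J=2$ iff $J$ is perfect, by Hilbert--Burch.
\item Items (2) and (3): classify the height-two unmixed ideals, since $I_\nu$ has codimension $\ge 2$ and, being the cokernel of a reflexive-ish module, its top component has degree $1$ or $2$. The standard list is: a linear CI; a union of two codimension-two linear spaces; a double structure (linear primary of multiplicity $2$); or a $(1,2)$ CI.
\item Items (4) and (5) use the formula together with $e\le d$, which holds because the Koszul syzygies have degree $d$. The expression $d^2+e^2-ed-\deg$ is at most $d^2-\deg$, with equality iff $e=d$. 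Hence $\Bour=d^2$ forces $\deg R/J=0$, which is impossible at height two unless we read it as $e=d$ with $J$ of lower multiplicity. More carefully, I would redo this using that $\Bour=d^2$ iff $\deg R/J$ equals $e^2-ed$, combined with $\deg R/J\ge 1$. I expect the precise bookkeeping of these extremes to be delicate.
\end{itemize}

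\emph{Locally free case and quadrics.} If $\syz(J)$ is locally free on the punctured spectrum, then $I_\nu$ is locally CI, and $\deg R/I_\nu\le e^2$ follows by viewing $I_\nu$ as the degeneracy locus of a section of twists, giving $\Bour\le e\cdot e$. The degree bounds then follow from the formula. For $d=2$ we have $e\in\{1,2\}$, and $\Bour=2$ forces $\deg R/J=2+e^2-2e-2$. I would run through the list in (3) and derive a contradiction from the explicit shape of a $2\times 3$ Hilbert--Burch-type presentation. This case analysis, the main obstacle, is done by hand for $n=3$ and with saturation in general.
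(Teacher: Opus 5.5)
Your first route, specializing Main Theorem~1 with $d_1=0$, $d_2=d$, $\Q=R/J$, $e_0(\Q)=0$, $e_1(\Q)=\deg(R/J)$ and $\fq_\Theta=d^2$, is exactly the paper's, and it gives the displayed formula at once. Your items (1)--(3) also follow the paper: Hilbert--Burch, and the associativity formula for the unmixed ideal $I_\nu$.

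The ``more transparent'' computation you actually carry out is wrong, however. $c_1(\syz(J))$ is $-3d$ if $\syz(J)=\ker(R(-d)^3\to R)$, and $-d$ in the paper's normalization $\syz(J)\subset R^3$; it is never $-2d$. In the first normalization a syzygy with entries of degree $e$ spans $R(-e-d)$, not $R(-e)$. With your inputs the $t^{n-2}$ terms do not match, and the second-order comparison gives $e^2-2de+d^2/2$. The correct computation is $\tfrac12\big((e+d)^2+(e-2d)^2-3d^2\big)=d^2-de+e^2$.

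There are smaller gaps in (4)--(5):
\begin{itemize}
\item $e(e-d)=0$ means $e\in\{0,d\}$, not just $e=d$. You must exclude $e=0$: an almost complete intersection has three $k$-linearly independent generators, so there is no constant syzygy.
\item (5) is an equivalence, and its converse concerns height-three complete intersections, where the almost complete intersection formula does not apply. You need the Buchsbaum--Rim case ($e=d$, $e_0(\Q)=e_1(\Q)=0$, hence $\Bour(J)=\fq_\Theta=d^2$ by Proposition~\ref{BourM-q}). This also disposes of the complete intersection case in (4).
\end{itemize}

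The genuine gaps are in the last two parts.

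\textbf{The bound $\Bour(J)\le e^2$.} ``Degeneracy locus of a section'' yields no inequality by itself. The zero scheme of $\nu$ as a section of the rank-two bundle $\widetilde{\syz(J)}(e)$ has degree $c_2=d^2-de+e^2-\deg(R/J)$, which is the formula again, so the argument is circular. The missing idea is to compare $I_\nu$ with the ideal $H=(a_1,a_2,a_3)$ generated by the entries of $\nu$, which have no common factor by minimality of $e$. The paper dualizes the Bourbaki diagram, using $\operatorname{Ext}^1(R^3/\nu R(-e),R)\cong(R/H)(e)$. Local freeness kills $\operatorname{Ext}^2(J,R)_\fp$ and identifies $\operatorname{Ext}^1(I_\nu,R)_\fp\cong(R/I_\nu)_\fp$. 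This gives surjections $(R/H)_\fp\twoheadrightarrow(R/I_\nu)_\fp$ at every minimal prime $\fp$ of $I_\nu$, whence $\deg(R/I_\nu)\le\deg(R/H)\le e^2$ by associativity. Geometrically, $V(I_\nu)$ lies inside the zero locus of $\nu$ viewed as a section of $\mathcal O(e)^{3}$, which is cut out by three forms of degree $e$.

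\textbf{The quadratic case.} Your arithmetic is off: $\deg(R/J)=2+e^2-2e$, not $e^2-2e$. You must also use $\Bour\le e^2$ to exclude $e=1$, which leaves $e=2$ and $\deg(R/J)=2$.

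For (b), a case analysis of the list in (3) applied to $I_\nu$ cannot succeed. For $n\ge 4$, a locally free $\Theta$ with $I_\nu=\fp_1\cap\fp_2$ really occurs (Example~\ref{ex:quadrics-Bour2}, where $J$ is not saturated). The paper applies the degree-two classification to $J$ itself. Saturation and local freeness make $J$ unmixed of height two (Lemma~\ref{lem:loc-free-associated-primes}). Then $J$ would be one of the following, none of which is minimally generated by three quadrics:
\begin{itemize}
\item an intersection of two codimension-two linear primes;
\item a $(1,2)$ complete intersection;
\item a locally Cohen--Macaulay double structure on a codimension-two linear space (Nollet's normal form).
\end{itemize}

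For (a), an argument through $I_\nu$ can be made to work, but you have not supplied it. In $\mathbb P^2$ a length-two scheme is a $(1,2)$ complete intersection. Theorem~\ref{Thm:Bourbaki-resolutions}(b) with $s=e-d=0$ would then force a minimal generator of $\syz(J)$ in degree $1<e$. The paper instead uses the possible shapes of the minimal resolution of $R/J$.
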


The second is the linear case $d_1=d_2=1$, treated in \Cref{sec:linear-matrices}, where Main Theorem~\ref{mainBourlinear} describes the possible Bourbaki degrees and homological types via the Kronecker--Weierstrass classification. }

{
\begin{mthm}\label{mainBourlinear}
    Let $\Theta$ be a $2\times 4$ matrix of linear forms in $R=k[x_1,\ldots,x_n]$. Then the
Kronecker--Weierstrass normal form determines completely the Bourbaki degree $\Bour(\Theta)$,
the initial degree $\indeg(\syz(\Theta))$, and the homological type of $\Q=\cok(\Theta)$.
In particular, each normal form is either free, or nearly free with $\Bour(\Theta)=1$, or of
Buchsbaum--Rim type with $\Bour(\Theta)=3$, except for one exceptional type, namely
\[
\Theta=D_2\mid B_1=
\begin{bmatrix}
x_1 & x_2 & 0 & x_3\\
0   & x_1 & x_2 & x_4
\end{bmatrix},
\]
for which $\Bour(\Theta)=2$ and $\Q$ has a minimal free resolution which is not of
Buchsbaum--Rim type. 
\end{mthm}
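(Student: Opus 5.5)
The plan is to reduce to the finitely many Kronecker--Weierstrass normal forms of a $2\times 4$ linear pencil and treat each one by hand. View $\Theta$ as a pencil $xA+yB$ after a linear change of coordinates; more precisely, write $\Theta=\sum_i x_i M_i$ with $M_i$ constant $2\times 4$ matrices. Row and column operations over $k$, together with linear changes of the variables, do not alter $\syz(\Theta)$ up to isomorphism, nor $\Q$ up to isomorphism of graded modules. Hence $\Bour(\Theta)$, $\indeg(\syz(\Theta))$ and the homological type of $\Q$ depend only on the normal form.

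The first step is to list the normal forms that have rank $2$ (the standing hypothesis) and size exactly $2\times 4$. They are direct sums of the following blocks:
\begin{itemize}
\item Kronecker blocks $L_\epsilon$ of size $\epsilon\times(\epsilon+1)$, such as $L_1=[x_1\ x_2]$ and $L_2$;
\item their transposes;
\item Jordan-type blocks $D_m$ ($m\times m$ pencils, including nilpotent ones such as $D_2$);
\item zero columns $B_1$, each contributing a column of an independent variable or of $0$.
\end{itemize}
Because more than two variables may occur, I will allow "generic" blocks in which distinct entries are independent variables. The resulting list is short: for example $L_1\oplus L_1\oplus$ stuff, $L_2\mid$ column, $L_3$, $D_2\mid B_1$, a scalar Jordan block plus columns, and so on.

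For each form I will compute $\syz(\Theta)$ directly, using the structure of the blocks.
\begin{itemize}
\item A block $L_\epsilon$ has kernel generated in degree $\epsilon$, namely the Koszul-type vector $(x_2^\epsilon,-x_1x_2^{\epsilon-1},\dots)$.
\item Block sums with shared rows are handled by solving the linear system row by row.
\end{itemize}
From this computation I read off $e$, and decide whether $\Theta$ is free, via the Hilbert--Burch/Saito criterion: freeness holds if the two generators' $2\times 2$ minors recover the maximal minors of $\Theta$ up to a unit. In the nonfree cases I will compute $\dim\Q$, $e_0(\Q)$ and $e_1(\Q)$. When the $2\times 2$ minors of $\Theta$ generate an ideal of codimension $3$, the Main Theorem gives $\dim\Q\le n-3$, so $\Bour(\Theta)=\fq_\Theta=3$ and the resolution is Buchsbaum--Rim. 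When the ideal of minors has codimension $2$, I apply the Main Theorem formula with $d=2$, $\fq_\Theta=3$. The nearly free cases ($e=1$ with a second generator in degree $2$ and one relation) should yield $\Bour=1$, since then $I_\nu$ is generated by two independent linear forms.

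The main obstacle is the exceptional form $D_2\mid B_1$, where I must show $\Bour=2$ and that the resolution is not Buchsbaum--Rim. Its minors are
\[
x_1^2,\quad x_1x_2,\quad x_2^2,\quad x_1x_4-x_2x_3,\quad x_2x_4,
\]
which have codimension $2$, with $\Q$ supported along $x_1=x_2=0$. The linear syzygy is $\nu=(x_2^2,-x_1x_2,x_1^2,0)$? This has degree $2$, so it cannot be a linear syzygy. I will therefore search for the minimal syzygy degree directly, expecting $e=2$. I will then compute $e_0(\Q)$ and $e_1(\Q)$ from the multiplicity of $\Q$ along $(x_1,x_2)$ (likely $e_0=1$) and plug the values into the formula to get $2$.

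As a check, I will compute $I_\nu$ directly, expecting a codimension-two ideal of degree $2$ such as $(x_1,x_2^2)$ or $(x_1,x_2)\cap(\ldots)$. Finally, I will exhibit the minimal resolution of $\Q$ by computing its Betti numbers, for instance with a mapping-cone argument. Comparing these with the Betti numbers $4,2;\,6;\,\dots$ of the Buchsbaum--Rim complex will show the discrepancy.
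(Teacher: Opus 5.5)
Your overall plan is the paper's route: reduce to the Kronecker--Weierstrass normal forms, handle the forms with $\hht I_2(\Theta)=3$ by Proposition~\ref{BourM-q}, and otherwise compute $e$, $e_0(\Q)$, $e_1(\Q)$ and apply the Bourbaki degree formula. As written, however, it has genuine gaps.

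\textbf{The reduction is set up incorrectly.} A $2\times 4$ matrix of linear forms is in general not a pencil $xA+yB$ in two variables; for instance $B_1\mid B_1\mid B_1\mid B_1$ involves eight independent variables. Nor does the Kronecker--Weierstrass theorem classify $n$-tuples $(M_1,\dots,M_n)$ of constant $2\times 4$ matrices. The classification that applies is that of the pencil spanned by the two \emph{rows}, each row being a constant $4\times n$ coefficient matrix. Consequently the blocks occupy disjoint sets of columns and of variables, with shapes $D_m$ ($2\times(m+1)$ in $m$ variables), $J_m(\lambda)$ ($2\times m$ in $m$ variables) and $B_m$ ($2\times m$ in $m+1$ variables). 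In particular $B_1$ is not a zero column and $D_2$ is not square. Since you never produce the resulting list, the claim that every normal form lands in one of the four classes is not established.

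The list must also track coincidences of eigenvalues, not just block shapes. $J_3(\lambda)\mid J_1(\mu)$ is Buchsbaum--Rim for $\lambda\neq\mu$ but nearly free for $\lambda=\mu$. Likewise $J_1(\lambda)\mid J_1(\lambda)\mid J_1(\mu)\mid J_1(\mu)$ is free, while $J_1(\lambda)\mid J_1(\lambda)\mid J_1(\lambda)\mid J_1(\mu)$ is nearly free with $e_0(\Q)=1$, $e_1(\Q)=-1$. Finally, a nearly free $\syz(\Theta)$ has three generators and one relation: degrees $1,2,2$ when $e_0(\Q)=0$, and $1,1,1$ when $e_0(\Q)=1$. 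It is not ``a second generator in degree $2$''.

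\textbf{The exceptional case would not yield $2$ as planned.} The minors of $D_2\mid B_1$ are $x_1^2,\ x_1x_2,\ x_1x_4,\ x_2^2,\ x_2x_4-x_1x_3,\ -x_2x_3$, not the five you list. More seriously, $e_0(\Q)=1$ is impossible. You yourself note that $\Supp\Q=V(x_1,x_2)$ has codimension $2$, and $e_0(\Q)\neq 0$ exactly when $\codim\Q=1$. With $e_0(\Q)=1$ and $e=2$ the formula reads $\Bour(\Theta)=4+e_1(\Q)$. The correct inputs are $e_0(\Q)=0$ and $e_1(\Q)=\length(\Q_\fp)\deg(R/\fp)=1$ for $\fp=(x_1,x_2)$. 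Indeed, $x_3$ is a unit in $R_\fp$, and after scaling the fourth column and one row operation the cokernel is $R_\fp/\fp R_\fp$. This gives $\Bour(\Theta)=e(e-d)+\fq_\Theta-e_1(\Q)=0+3-1=2$.

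You also still owe $\syz(\Theta)_1=0$. Write a linear element of the kernel of the first row as $\ell e_3+\alpha s_2+\beta s_3+\gamma s_4$, with $s_2,s_3,s_4$ the Koszul syzygies of $(x_1,x_2,x_3)$ (cf.\ Example~\ref{D2B1}). Applying the second row gives $\ell x_2-\alpha x_1^2-\beta x_1x_4+\gamma(x_1x_3-x_2x_4)$, which vanishes only trivially. Your quadric $(x_2^2,-x_1x_2,x_1^2,0)$ then gives $e=2$.

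For the resolution, note that the Buchsbaum--Rim complex of a $2\times 4$ matrix has ranks $2,4,4,2$. Your candidate $I_\nu=(x_1,x_2^2)$ cannot occur: here $s=e-d+e_0(\Q)=0$, so the surjection $\syz(\Theta)\to I_\nu(s)$ would force a nonzero linear syzygy. The paper instead uses $\mathrm{HP}_{\Q}(t)=t+3$ (for $n=4$) to see that $R/I_\nu$ is a degree-$2$ curve of arithmetic genus $-1$, resolved by $0\to R(-4)\to R(-3)^4\to R(-2)^4\to I_\nu\to 0$. Theorem~\ref{Thm:Bourbaki-resolutions} lifts this to a resolution of $\syz(\Theta)$, giving $0\to R(-4)\to R(-3)^4\to R(-2)^5\to R^4\to R(1)^2$ for $\Q$. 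That resolution has length $4$, so it is not of Buchsbaum--Rim type, and Lemma~\ref{Lem:ext-of-var} extends the result to $n\geq 5$.
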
}

It is worth noting that the exceptional case in the previous statement yields an example of a linear matrix with Bourbaki degree $2$, a value which does not occur when $\Theta$ is the Jacobian of a pencil of quadrics, according to the classification done in \cite[Section 6]{Faenzi2025}. This classification is studied from our point of view in \Cref{thm:pencils-quadrics-FJV}.

{ A natural direction for further investigation is the case $d_1=1$ and $d_2\geq 2$. This is the first genuinely mixed-degree setting, and it is natural to ask which Bourbaki degrees occur and how they encode the algebraic and geometric properties of the ideal $I_2(\Theta)$. This case is of particular geometric interest, since it includes the ideals associated with complete intersection curves in $\PP^3$ defined by a quadric and a form of degree $d+1$. Thus, its study may provide a bridge between the numerical theory of Bourbaki degrees and the geometry of $(2,d+1)$--complete intersection space curves.}

Finally, we shift our attention to a more geometric context, relating, in the case $n=3$, the syzygy module $\syz(\Theta)$ to the theory of distributions on the three-dimensional projective space. To be more precise, let
$$ \varepsilon \doteq \begin{bmatrix}
x_1 \\ x_2 \\ x_3 \\ x_4
\end{bmatrix}: R(-1) \longrightarrow R^4 $$
denote the \emph{Euler vector}, and consider the composition $\Theta \circ \varepsilon$; it can be written in terms of two homogeneous polynomials as follows
$$ \Theta \circ \varepsilon =
\begin{bmatrix}
h_1\\h_2
\end{bmatrix}: R(-1) \longrightarrow R(d_1)\oplus R(d_2). $$

\begin{mthm}
If $(h_1, h_2)$ is a regular sequence, then $\syz(\Theta)(1)$ is the graded module associated to the tangent sheaf of a codimension one distribution on $\mathbb{P}^3$.
\end{mthm}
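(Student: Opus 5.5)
The plan is to use the standard definition of a codimension one distribution on $\mathbb{P}^3$. Such a distribution is given by an exact sequence
$$0\to T_{\mathcal F}\to T_{\mathbb{P}^3}\xrightarrow{\ \omega\ } I_Z(c+2)\to 0,$$
where $T_{\mathcal F}$ is reflexive of rank two. Equivalently, $T_{\mathcal F}$ is a saturated subsheaf of $T_{\mathbb{P}^3}$ whose quotient is torsion free. On the module level, $T_{\mathbb{P}^3}$ corresponds to $R(1)^4/\varepsilon R$, through the Euler sequence $0\to R\xrightarrow{\varepsilon}R(1)^4\to T\to0$.

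\textbf{Step 1: $\syz(\Theta)(1)$ sits inside $T$.} I first show that $\syz(\Theta)(1)$ embeds in the module $T=\cok(\varepsilon)$. Compose the inclusion $\syz(\Theta)(1)\subset R(1)^4$ with the projection $R(1)^4\to T$. The kernel of this composition is $\syz(\Theta)(1)\cap \varepsilon R$. A multiple $g\varepsilon$ lies in $\syz(\Theta)$ exactly when $g h_1=g h_2=0$. Since $(h_1,h_2)$ is a regular sequence, neither $h_i$ is zero, so $g=0$ and the map is injective.

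\textbf{Step 2: the quotient is torsion free.} Next I identify the cokernel of $\syz(\Theta)(1)\hookrightarrow T$. Consider $\Theta(1)\colon R(1)^4\to R(d_1+1)\oplus R(d_2+1)$. Its kernel is $\syz(\Theta)(1)$, and it sends $\varepsilon$ to $(h_1,h_2)$. Hence $T/\syz(\Theta)(1)$ is isomorphic to $\Im(\Theta)/R\cdot(h_1,h_2)$, which is a submodule of $\big(R(d_1+1)\oplus R(d_2+1)\big)/R(h_1,h_2)$.

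The key input is that for a regular sequence $(h_1,h_2)$, the module $R^2/R(h_1,h_2)$ is isomorphic to the ideal $(h_1,h_2)$, up to a shift. This follows from the Koszul complex: the map $(a,b)\mapsto ah_2-bh_1$ is surjective onto $(h_1,h_2)$, and its kernel is $R(h_1,h_2)$ by regularity. Thus $R^2/R(h_1,h_2)$ is torsion free. Its submodule $\Im(\Theta)/R(h_1,h_2)$ is therefore also torsion free, of rank one, and is isomorphic to an ideal $I\cdot(\text{shift})$.

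Concretely, the distribution's $1$-form is $\omega=h_2\Theta_1-h_1\Theta_2$. It is a twisted $1$-form, and $\omega(\varepsilon)=h_2h_1-h_1h_2=0$, so $\omega$ descends to $T$. Its kernel on $T$ is exactly $\syz(\Theta)(1)$ modulo $\varepsilon$. This last fact again uses regularity: if $h_2\Theta_1(v)=h_1\Theta_2(v)$, then $\Theta(v)=g(h_1,h_2)$ for some $g$, so $v-g\varepsilon\in\syz(\Theta)$.

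\textbf{Step 3: sheafify and check rank.} Sheafifying gives $0\to\widetilde{\syz(\Theta)(1)}\to T_{\mathbb{P}^3}\to \mathcal I_Z(m)\to0$ with a torsion-free rank one quotient. Since $\rk\syz(\Theta)=2$, the rank of the kernel is $3-1=2$. A saturated subsheaf of a reflexive sheaf with torsion-free quotient is reflexive, so the kernel is the tangent sheaf of a codimension one distribution. The integrability condition is not needed for a distribution, so I will not check it.

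The main obstacle is the module-level bookkeeping. One must show that $\syz(\Theta)(1)$ itself, and not merely its sheafification, is the graded module $H^0_*$ of the tangent sheaf. This amounts to saturation, i.e.\ depth at least $2$. I expect it to follow because $\syz(\Theta)$ is a second syzygy module, which is reflexive, so $H^0_*$ of its sheaf recovers it.
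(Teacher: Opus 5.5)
Your proposal is correct and follows the paper's own argument. The paper likewise uses the Euler sequence to place $\syz(\Theta)$ inside $T(-1)$, and shows that the cokernel $F$ embeds in $\cok(\Theta\circ\varepsilon)$, which is torsion-free because $(h_1,h_2)$ is a regular sequence; your Koszul-complex proof of that torsion-freeness, and your remark that reflexivity of $\syz(\Theta)$ makes it equal to $H^0_*$ of its sheaf, spell out two points the paper leaves terse.
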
 

In particular, the hypothesis always holds when $\Theta$ is the Jacobian matrix of a regular sequence of homogeneous polynomials $(f,g)$, and we show it holds whenever the ideal of minors of $\Theta$ has codimension at least two (see \Cref{rmk:distributions-normal-matrices}). Using the geometry of foliations, we obtain more refined bounds for the Bourbaki degree when the initial degree of the syzygy module is low (Propositions \ref{prop:e-0-foliations} and \ref{prop:initial-degree-1-n-4}). We also show that nearly free matrices (i.e. $\Bour(\Theta)=1$) cannot induce locally free syzygy modules on the punctured spectrum, extending a result known for Jacobian matrices.

\bigskip 

The paper is organized as follows.
\Cref{sec:hom-struct} collects the homological and numerical ingredients that make possible the definition
of the Bourbaki degree. We begin by studying the module $\Q=\cok(\Theta)$ and its relation with the determinantal ideal $I_2(\Theta)$. When $I_2(\Theta)$ has maximal possible grade, the \textit{Buchsbaum--Rim complex}
provides an explicit graded free resolution of $\Q$, whose maps are determined by $\Theta$ and its $2$-minors. This gives the fundamental homological model for the matrices
considered here. We then introduce the notions of free and locally free matrices, and study
how these properties are reflected in the support and associated primes of $\Q$. A second
topic  is the initial degree $e=\indeg(\syz(\Theta))$, which is shown to satisfy
$0\leq e\leq d_1+d_2$ and is further constrained by the interaction between the two row-wise
syzygy modules. Finally, we study the Hilbert polynomial of $\Q$ and its first Hilbert
coefficients $e_0(\Q)$ and $e_1(\Q)$, which are the numerical invariants entering the main
formula of the paper. In particular, we prove the bound $0\leq e_0(\Q)\leq d_1+d_2$, and
show that the extremal case forces $\syz(\Theta)\simeq R^2$ up to grading (see~\Cref{Maximale0}).

\medskip 

\Cref{sec:Bourbaki-degree} contains the algebraic core of the paper. There, we define the Bourbaki degree of $\Theta$ and establish the formula
$$
\Bour(\Theta)=(e-d)(e+e_0(\Q))+\fq_\Theta+\ell_\Q+e_1(\Q),
$$
where $d=d_1+d_2$, $\fq_\Theta=d_1^2+d_2^2+d_1d_2$, and $\ell_\Q=\frac{1}{2}(e_0(\Q)^2+e_0(\Q))$.
This identity generalizes the Bourbaki degree formula previously obtained in the Jacobian
setting in dimension three~\cite[Theorem 2.1]{MAA}, and shows that $\Bour(\Theta)$ is governed simultaneously by
the degree of a minimal syzygy and by the Hilbert-theoretic behavior of the cokernel. In the case $\dim \Q\leq n-3$, the formula simplifies to $\Bour(\Theta)=\fq_\Theta$, recovering the Buchsbaum--Rim situation. We also prove that graded free resolutions of $\syz(\Theta)$ and
of the Bourbaki ideal determine one another (see Theorem~\ref{Thm:Bourbaki-resolutions}), which allows us to relate the homological
structure of $\syz(\Theta)$ to the geometry of the associated codimension two scheme. We show that local freeness of $\Theta$ forces the Bourbaki scheme to be locally Cohen--Macaulay on the punctured spectrum. As
applications, we characterize nearly free matrices and $3$-syzygy matrices in terms of the associated Bourbaki ideal. 

\medskip 

{ In \Cref{sec:Bour-of-ideals},  we consider the case where the first row of $\Theta$ has degree $0$ and the second row has degree $d>0$. This allows us to extend the notion of Bourbaki degree introduced in~\cite{MAA} for Jacobian ideals of reduced plane curves to the broader setting of three--equigenerated ideals. Indeed, in this case $\Theta$ is graded equivalent to a matrix of the form
\[
\begin{bmatrix}
0&0&0&1\\
f_1&f_2&f_3&0
\end{bmatrix},
\qquad f_1,f_2,f_3\in R_d.
\]
Motivated by this normal form, given an ideal
\[
J=(f_1,f_2,f_3)\subset R=k[x_1,\ldots,x_n]
\]
generated by three homogeneous forms of the same degree $d$, with $\gcd(f_1,f_2,f_3)=1$, we associate to $J$ the matrix
\[
\Theta=
\begin{bmatrix}
0&0&0&1\\
f_1&f_2&f_3&0
\end{bmatrix}.
\]}
Then $I_2(\Theta)=J$, and the matrix construction developed in \Cref{sec:hom-struct} induces a Bourbaki degree for $J$, denoted by $\Bour(J):=\Bour(\Theta)$. In this way, we extend to arbitrary three-equigenerated ideals the construction introduced in \cite{MAA} for the Jacobian
ideals of reduced plane curves, following a strategy analogous to \cite[Section 2.2]{monteiro2025}. If $J$ is an almost complete intersection and $e=\indeg(\syz(J))$, then
$$
\deg(R/J)+\Bour(J)=d^2+e^2-ed.
$$
Thus $\Bour(J)$ measures the defect of $\deg(R/J)$ from the value $d^2+e^2-ed$ determined by the numerical data $d$ and $e$. The main structural result of the section is Theorem~\ref{Bourequi}, which gives a precise interpretation of the extremal and small values of the Bourbaki degree. In particular, it shows that $\Bour(J)=0$ if and only if $J$ is perfect ideal, that $\Bour(J)=d^2$ if and only if $J$ is a complete intersection, and that the cases $\Bour(J)=1$ and $\Bour(J)=2$ are governed by the geometry of the associated Bourbaki ideal. In this sense, $\Bour(J)$ emerges as a numerical invariant that measures how far $J$ is from the perfect case, and especially from being a complete intersection.

Moreover, assuming that $\syz(J)$ is locally free on the punctured spectrum, Theorem~\ref{thm:equigenerated-Bour-bound} gives the bound $\Bour(J)\le e^2$, and hence
$$ d(d-e)\le \deg(R/J)\le d^2+e^2-ed. $$
A particularly significant application appears in Theorem~\ref{prop:loc-free-d-2-Bour2-eq-ideal}, which treats the quadratic case. It shows that if $J$ has height two, is generated by quadrics, and $\syz(J)$ is locally free on the punctured spectrum, then $\Bour(J)=2$ if and only if $n\geq 4$ and $J$ is unsaturated. Hence, the value $\Bour(J)=2$ is completely characterized in this setting: it can occur only in higher dimensions, and precisely when the ideal is not saturated. In particular, this rules out $n=2$; equivalently, in the Jacobian situation, there are no plane cubics with Bourbaki degree $2$. Therefore, even such a small nonzero value of the Bourbaki degree already reflects a subtle geometric defect and reveals that the phenomenon is inherently higher-dimensional.

\medskip

\Cref{sec:linear-matrices} is devoted to the case of linear matrices. Here, the Kronecker--Weierstrass normal
form provides a complete classification of $2\times 4$ matrices of linear forms up to equivalence. We analyze each possible normal form and determine the corresponding
Bourbaki degree, the initial degree of syzygies, and the homological behavior of the cokernel. This produces a rather complete picture: a large family of normal forms falls into the Buchsbaum--Rim case and has $\Bour(\Theta)=3$; several other families are free; some
are nearly free with $\Bour(\Theta)=1$; and one exceptional type, namely the matrix $D_2|B_1$, has $\Bour(\Theta)=2$ and a minimal free resolution which is not of
Buchsbaum--Rim type. For $n=4$, Jacobian matrices were classified in \cite[Theorem 6.1]{Faenzi2025}. In that context, the pairs $\sigma=(f,g)$ are interpreted as \emph{pencils of quadrics}, and the classification shows that the only possible Bourbaki degrees are ${0,1,3}$. In contrast, our classification (see \Cref{thm:KWform-Bour2}) produces an example with Bourbaki degree $2$, and hence one that cannot be Jacobian. This example is particularly noteworthy because $\syz(\Theta)$ is locally free on the punctured spectrum even though $\Bour(\Theta)\neq 0$, a phenomenon excluded in the Jacobian setting by the additional Jacobian hypothesis.

\medskip

In \Cref{sec:geometrical-point-of-view}, we turn to the geometric point of view, focusing on $n=4$ where we have the analogy with Jacobian matrices and logarithmic sheaves. Removing the Jacobian hypothesis, we study when the sheaf associated to $\syz(\Theta)$ defines a codimension one distribution on $\mathbb{P}^3$, and reproduce the strategy to consider the sub-foliation by curves induced by the syzygy of minimal degree (from \cite[Section 3]{monteiro2025}) to obtain bounds for the Bourbaki degree for sufficiently low values of the initial degree of the module $\syz(\Theta)$. At the end, we show that nearly free matrices cannot induce locally free modules at the punctured spectrum, extending a known result for Jacobian matrices (\cite[Proposition 19]{monteiro2025}).

The present work lays the groundwork for a broader application of the Bourbaki degree as a numerical invariant for matrices and ideals. We hope that its interplay with homological algebra, commutative algebra, and algebraic geometry will open up new avenues for research, including the study of logarithmic sheaves, distributions and foliations, and the classification of singularities in higher dimensions.

\subsection*{Acknowledgments}
FM is supported by the São Paulo Research Foundation (FAPESP) by the PhD grant number \#2021/10550-4, under the cotutelle supervision of Marcos Jardim and Daniele Faenzi, with partial funding by the Bridges "Brazil-France interplays in Gauge Theory, extremal structures and stability" projects ANR-21-CE40-0017 and ANR-17-EURE-0002.
MJ is partially supported by the CNPQ grant number 305601/2022-9, the Brazilian Centre for Geometry (FAPESP-CEPID Project number 2024/00923-6), and the PAPESP-ANR project number 2021/04065-6.
We thank Daniele Faenzi for insightful discussions. ANN was partially supported by FAPESP Grant No. 2022/09853-5 and sincerely thanks IMECC–UNICAMP for its generous hospitality and for providing an excellent academic environment during his visit. This work was carried out during that period and benefited greatly from the stimulating research atmosphere at IMECC.


\section{Homological Structure}\label{sec:hom-struct}

Throughout, let $R = k[x_1,\ldots,x_n]$ be a polynomial ring with $n\geq 3$ over an algebraically closed field $k$ of characteristic zero. Let 
		$$\Theta=\begin{bmatrix}
			f_1&f_2&f_3&f_4\\
			g_1&g_2&g_3&g_4
		\end{bmatrix}$$ 
		be a $2\times 4$ matrix of rank $2$ with entries in $R$, where $\deg(f_i)=d_1$ and $\deg(g_i)=d_2$ for all $i$  and $0\le d_1\le d_2$. Additionally, assume that the greatest common divisor (gcd) of the entries in the first row and the second row is $1$. 	
	 The matrix $\Theta$ determines a graded $R$-linear map of free modules
	\begin{equation}\label{Theta}
	   R^{4} \xrightarrow{\;\Theta\;} R(d_{1}) \oplus R(d_{2}), 
	\end{equation}
which sends the basis element $e_i \in R^{4}$ to the pair $(f_i,\,g_i)$ for each $i=1,\ldots,4$.

We denote by $\N:=\mathrm{Im}(\Theta)$  and $\Q:=\cok(\Theta)$ the image  and the cokernel of $\Theta$, respectively. The kernel of the graded $R$-linear map~\eqref{Theta} is the second syzygy
module of $\Q$, and we denote it by
\[
\syz(\Theta) := \ker(\Theta) \subseteq R^{4}.
\]
We then have the following short exact sequences:
\begin{equation}\label{FundSequ}
0\rar \syz(\Theta)\rar R^4\stackrel{\Theta}{\lar} \N\rar 0, \,\, \ \ \ \  \quad 0\rar \N\rar R(d_1)\oplus R(d_2)\rar \Q\rar 0. 
\end{equation}
 Passing to associated sheaves on $\mathbb{P}^{n-1} = \proj{R}$, 
 the graded module $\syz(\Theta)$ determines a coherent subsheaf
$
\widetilde{\syz(\Theta)} \subset \mathcal{O}_{\mathbb{P}^{n-1}}^{\oplus 4}$. 
By analogy with \textit{logarithmic tangent sheaf} or \textit{tangential idealizer} associated to a hypersurface, the graded $R$-module $\syz(\Theta)$ and its associated sheaf $\widetilde{\syz(\Theta)}$ may be regarded as the \textit{tangential module} and logarithmic tangent sheaf associated to $\Theta$, respectively. Moreover,  $\syz(\Theta)$ is the second syzygy of the $R$-module $\Q$ and hence $\syz(\Theta)$ is a reflexive graded $R$-module of rank $2$; equivalently, $\widetilde{\syz(\Theta)}$ is a rank two reflexive sheaf on $\mathbb{P}^{n-1}$.

\subsection{Graded free resolution}\label{subsec:graded-free-res}
Since $\mathrm{Fitt}_0(\Q) =I_2(\Theta)\subseteq \Ann(\Q)$ (\cite[Proposition 20.7]{EisenbudBook}), and moreover they have the same support $V(\Ann(\Q))=\supp(\Q)=\supp(R/I_2(\Theta))$. By the determinant formula for the height of a determinantal ideal~\cite[Theorem 2.1]{Bruns-Vetter}, one has $\hht(I_2(\Theta))\leq 3$. Then 
		\[
		1\leq \dim \Q=\dim R/\Ann(\Q)=\dim R/I_2(\Theta)\leq n-1. 
		\]
		  
		Assume that $\grade\big(I_2(\Theta)\big)\geq3$ (e.g,. $\dim \Q=1$). 
		By the \emph{Buchsbaum--Rim complex}~\cite[A2.6]{EisenbudBook}, which resolves the cokernel of an $n\times m$ matrix with $n\ge m$ when the ideal of maximal minors has the maximal possible grade, the module $\mathcal{Q}$ admits a graded free resolution of the form
        
\begin{equation}\label{BRCmain}
\begin{split}
0 \to
R\big(-2(d_1+d_2)\big)\oplus R\big(-(d_1+3d_2)\big)
\xrightarrow{\phi}
R^{4}\big(-(d_1+2d_2)\big) \\
\xrightarrow{\psi}
R^{4}(-d_2)
\xrightarrow{\Theta}
R\oplus R(d_1-d_2)
\to \mathcal{Q} \to 0.
\end{split}
\end{equation}
Here, the left-most map is (up to sign) the near transpose of $\Theta$, while the entries of the middle map are the $2$-minors of $\Theta$.  More precisely, 
		\[\psi=\begin{bmatrix}
		0 & \Delta_{12} & \Delta_{13} & \Delta_{14} \\
		-\Delta_{12} & 0 & \Delta_{23} & \Delta_{24} \\
		-\Delta_{13} & -\Delta_{23} & 0 & \Delta_{34} \\
		-\Delta_{14} & -\Delta_{24} & -\Delta_{34} & 0
	\end{bmatrix},\quad 
		\phi=
		\begin{bmatrix}
			 f_1 & -g_1\\[2pt]
			- f_2 & g_2\\[2pt]
			f_3 & -g_3\\[2pt]
			-f_4 & g_4
		\end{bmatrix},
		\]
where $\Delta_{ij}=f_ig_j-f_jg_i$ are the $2$-minors of the matrix $\Theta$. 
Under the above grade assumption, the ideal $I_2(\Theta)$ is perfect (hence Cohen--Macaulay), and its resolution has the same length as that of $\mathcal{Q}$ in \eqref{BRCmain}.

The second syzygy module of $\Q$, namely $\syz(\Theta)=\ker(\Theta)$, has rank $2$.
It is free if and only if $\Q$ has projective dimension $2$; equivalently, by the
Auslander--Buchsbaum formula, $\depth(\Q)=n-2$. In particular, if $\syz(\Theta)$ is free,
then $\hht\big(I_2(\Theta)\big)\leq 2$.
The converse fails in general. Indeed, outside the maximal-grade case, the
Buchsbaum--Rim complex may degenerate and produce other minimal free resolutions of $\Q$.
For example, one may have resolutions of the form
\[
0\longrightarrow R \longrightarrow R^{3} \longrightarrow R^{4}
\xrightarrow{\ \Theta\ } R^{2} \longrightarrow \Q \longrightarrow 0,
\]
or
\begin{equation}\label{depth_zero}
0\longrightarrow R \longrightarrow R^{4} \longrightarrow R^{5} \longrightarrow
R^{4} \xrightarrow{\ \Theta\ } R^{2} \longrightarrow \Q \longrightarrow 0,
\end{equation}
corresponding to $\depth(\Q)=n-3$ and $\depth(\Q)=n-4$, respectively.

Moreover, even when $\hht\big(I_2(\Theta)\big)=2$, the module $\Q$ may admit a minimal free resolution of the same homological length as in the Buchsbaum--Rim case, without
being the Buchsbaum--Rim resolution itself. This phenomenon does not occur for linear matrices, as will be seen in \Cref{sec:linear-matrices}. For instance, let
\[
\Theta=
\begin{pmatrix}
-x_2 & x_3-x_1 & x_2-x_4 & -x_3\\
x_2^2 & 2x_1x_2 & 3x_3^2+2x_3x_4 & x_3^2
\end{pmatrix},
\]
a Jacobian matrix from \cite[Example 62]{monteiro2025}. In this case, $\Q$ has minimal graded free resolution
\[
0 \to R(-4) \to R(-3)^3 \oplus R(-2) \to R^4
\xrightarrow{\Theta} R(1)\oplus R(2) \to \Q \to 0,
\]
so $\depth(\Q)=2$. Thus, although the resolution has the same homological length as in the Buchsbaum--Rim case, the graded shifts show that it is not the Buchsbaum--Rim resolution.
\begin{Example}
Let $\Theta$ be the matrix 
\[
\Theta = D_2 \mid B_1 = \begin{bmatrix}
x_1 & x_2 & 0   & x_3\\
0   & x_1 & x_2 & x_4
\end{bmatrix}.
\]
We will show at \Cref{thm:KWform-Bour2} that $\Q$ has a resolution of shape~\eqref{depth_zero}.
\end{Example} 
We next define the notions of freeness and local freeness for the matrix $\Theta$.
\begin{Definition}\label[Definition]{free-locallyfree}
The matrix $\Theta$ is said to be \emph{free} if $\syz(\Theta)$ is a free $R$-module. Equivalently, since $\syz(\Theta)$ is the second syzygy module of $\Q$, the Auslander--Buchsbaum formula yields $\depth(\Q)=n-2.$
We say that $\Theta$ is \emph{locally free} if $\syz(\Theta)$ is locally free on the punctured spectrum of $R$.   
\end{Definition}
The following Lemma is an algebraic version of~\cite[Lemma 2.2]{DMJ}. 
\begin{Lemma}\label[Lemma]{lem:loc-free-associated-primes}\hbox{}
\begin{enumerate}
	\item[{\rm (a)}] $\Theta$ is locally free if and only if every associated prime of $\Q$ different from $\mathfrak{m}$ has codimension at most $2$.
	\item[{\rm (b)}] If $\Theta$ is locally free, then every minimal prime of  $I_2(\Theta)$ has codimension at most $2$. 
\end{enumerate}
\end{Lemma}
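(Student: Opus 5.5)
The plan is to localize and apply the Auslander--Buchsbaum formula prime by prime, using that $\syz(\Theta)$ is the second syzygy module of $\Q$. Fix a prime $\fp\neq\fm$ and localize the exact sequence
\[
0\rar \syz(\Theta)_\fp \rar R_\fp^4 \rar R_\fp^2 \rar \Q_\fp \rar 0 .
\]
Then $\syz(\Theta)_\fp$ is free if and only if $\pd_{R_\fp}\Q_\fp\le 2$. Since $R_\fp$ is regular, this holds if and only if $\depth \Q_\fp \ge \hht\fp-2$, by the Auslander--Buchsbaum formula; the condition is vacuous when $\Q_\fp=0$. So local freeness of $\Theta$ is equivalent to the inequality $\depth \Q_\fp\ge \hht\fp -2$ for every non-maximal prime $\fp\in\supp(\Q)$.

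For (a), first suppose $\Theta$ is locally free and let $\fp\neq\fm$ be an associated prime of $\Q$. Then $\depth\Q_\fp=0$, so the inequality gives $\hht\fp\le 2$. Conversely, suppose every associated prime of $\Q$ other than $\fm$ has codimension at most $2$, and suppose $\Theta$ is not locally free. Among the non-maximal primes where $\syz(\Theta)_\fp$ fails to be free, choose $\fp$ minimal. For every prime $\fq\subsetneq\fp$ the localization $\syz(\Theta)_\fq$ is free, so $\Q_\fp$ has finite length or small projective dimension on the punctured spectrum of $R_\fp$. The key step is to show that $\depth \Q_\fp<\hht\fp-2$ forces some associated prime of $\Q$ contained in $\fp$, other than $\fm$, to have height at least $3$.

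For this I will use the standard criterion for syzygies: over the regular local ring $S=R_\fp$ of dimension $h$, the module $\syz(\Theta)_\fp$ is reflexive. Minimality of $\fp$ means it is free on the punctured spectrum of $S$. A reflexive module that is free on the punctured spectrum and has depth $h$ is free, so non-freeness gives $\depth \syz(\Theta)_\fp < h$. Then $\depth \Q_\fp=\depth\syz(\Theta)_\fp-2$, using projective dimensions. Since $\syz(\Theta)_\fp$ is free on the punctured spectrum of $S$, every prime $\fq\subsetneq\fp$ in $\supp\Q$ satisfies $\pd\Q_\fq\le 2$. This leaves the case in which $\Q_\fp$ has no associated prime of height $\ge3$ other than possibly $\fp S$. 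Here I want to show that $\fp S\in\Ass\Q_\fp$, using $\hht\fp\ge3$. Note that $\hht\fp\ge3$ holds automatically, since a reflexive module over a regular local ring of dimension $\le 2$ is free.

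This step is the main obstacle. To handle it, take $\Ext$ or local cohomology: freeness on the punctured spectrum makes $\Ext^i_S(\Q_\fp,S)$ of finite length for $i\ge3$, and non-freeness gives some nonzero one with $i\ge 3$. If instead $\depth\Q_\fp\ge1$, I would use an alternative tactic: choose a nonzerodivisor on $\Q_\fp$ avoiding the associated primes, and induct on $\hht\fp$ by passing to a smaller prime. The simplest variant of this is to pick $\fq\subsetneq\fp$ of height $\hht\fp-1$ containing a prime in $\Ass\Q$, and observe that the depth inequality propagates. I expect this part to require care. The fallback is to cite the sheaf-theoretic statement \cite[Lemma 2.2]{DMJ} and translate it into algebraic language.

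For (b), note that the minimal primes of $I_2(\Theta)$ coincide with the minimal primes of $\supp\Q$, and these are associated primes of $\Q$. Since $n\ge3$ and $\hht I_2(\Theta)\le3$, a minimal prime equal to $\fm$ would force $n\le 3$ and $\dim\Q=0$, contradicting $\dim\Q\ge1$. Hence every minimal prime is an associated prime different from $\fm$, and (a) gives codimension at most $2$.
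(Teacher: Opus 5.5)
\textbf{There is a genuine gap in the converse of (a).} Your reduction is correct up to the step you flag yourself. Take $\fp\neq\fm$ minimal such that $E:=\syz(\Theta)_\fp$ is not free. You correctly obtain that $E$ is free on the punctured spectrum of $S=R_\fp$, that $h=\hht\fp\ge3$, that $2\le\depth E<h$, and that $\depth\Q_\fp=\depth E-2$. What remains is exactly the claim $\depth E=2$ (equivalently $\fp S\in\Ass\Q_\fp$), and the proposal does not prove it.

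None of the tactics you list can prove it, because none of them uses that $E$ has rank $2$, and without rank $2$ the claim is false. In $R=k[x_1,\dots,x_5]$ take $I=(x_1,x_2)\cap(x_3,x_4)$ and $\fp=(x_1,\dots,x_4)$. Then:
\begin{itemize}
\item $R/I=\cok(R^4\to R)$ has only height-two associated primes;
\item the second syzygy $\ker(R^4\to R)$ is free at every prime strictly contained in $\fp$;
\item yet $(R/I)_\fp$ has depth $1$ (two planes meeting in a point) and projective dimension $3$, so this rank-$3$ second syzygy is not free at $\fp$.
\end{itemize}
So depth bounds do not propagate upward from smaller primes, and cutting by a nonzerodivisor loses control of the associated primes. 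The finite-length $\operatorname{Ext}$-modules only give $\operatorname{pd}\Q_\fp\ge3$, not $\operatorname{pd}\Q_\fp=h$. Falling back on a citation leaves this essential point unproved.

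\textbf{The missing ingredient is the Evans--Griffith syzygy theorem.} Suppose $\depth E\ge3$. Since $E$ is free on the punctured spectrum, it satisfies Serre's condition $(S_3)$. Hence it is a third syzygy over the regular local ring $S$, which contains the field $k$. A non-free third syzygy has rank at least $3$, so $E$ would be free. Therefore $\depth E=2$, so $\depth\Q_\fp=0$. Then $\fp$ is an associated prime of $\Q$ of height at least $3$ and different from $\fm$, which contradicts the hypothesis.

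For small heights there are easier arguments:
\begin{itemize}
\item For $h=3$ this is just Auslander--Buchsbaum.
\item For $h=4$ it also follows from $E\cong E^*$ and local duality, which give $H^2_{\fp S}(E)\cong H^3_{\fp S}(E)^\vee$.
\item For $h\ge5$, duality alone no longer rules out depth $3$.
\end{itemize}

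\textbf{The paper's own argument does not fill this gap either.} It passes to $\Q'=\Q_\fp/\underline{x}\Q_\fp$ for a maximal $\Q_\fp$-regular sequence $\underline{x}$ of length $t$. It then asserts $\Ass\Q'\subseteq\Ass\Q_\fp$, which fails in general; here it would already contradict $\fp R_\fp\in\Ass\Q'$. Next it infers $\hht\fp-t\le2$ from $\dim\Q'\le2$ and $\dim\Q'\le\hht\fp-t$, which does not follow. Like your tactics, it never uses rank $2$.

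\textbf{Forward direction and part (b).} These follow the paper, with one caveat. The input $\dim\Q\ge1$ that you take from the paper fails for $n=3$. For example, for $D_3$ in $k[x_1,x_2,x_3]$, the ideal $I_2(\Theta)$ is $\fm$-primary and $\Theta$ is trivially locally free. So (b) should be stated for minimal primes other than $\fm$, as in (a).
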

\begin{proof}
For any prime $\fp\subseteq R$, localize the exact sequence
\[
0 \lar \syz(\Theta) \lar R^{4} \xrightarrow{\Theta} R^{2} \lar  \Q \lar  0
\]
at $\fp$ to obtain
\[
0 \lar \syz(\Theta)_\fp \lar R^{4}_\fp \xrightarrow{\Theta_\fp} R^{2}_\fp \lar  \Q_\fp \lar  0.
\]
Thus $\syz(\Theta)_\fp$ is free if and only if $\Q_\fp$ has projective dimension at most~2 over $R_\fp$. 
Since $R_\fp$ is regular, the Auslander--Buchsbaum formula gives
\[
\projdim_{R_\fp}(\Q_\fp) = \hht(\fp) - \depth_{R_\fp}{(\Q_p)}.
\]
Hence $\syz(\Theta)_\fp$ is free if and only if $\depth_{R_\fp}{(\Q_{\fp})}\geq \hht(\fp)-2$.

Assume $\syz(\Theta)_\fp$ is locally free on the punctured spectrum.
Let $\fq$ be an associated prime of $\Q$ with $\fq\neq \fm$. 
Then $\depth_{R_\fq}{(\Q_\fq)}=0$. By above  applied at $\fq$, one has $0=\depth_{R_\fq}{(\Q_\fq)}\geq \hht(\fq)-2$, so that $\hht(\fq)\leq 2$.

Conversely, assume every associated prime of $\Q$ has height at most~2.
Let $\fp\neq\fm$. We must show $\depth_{R_\fp}{(\Q_\fp)}\geq \hht(\fp)-2$.

If $\Q_\fp=0$, the inequality holds trivially. Assume $\Q_\fp\neq 0$.
If $\hht(\fp)\leq 2$, then $\hht(\fp)-2\leq0$ and $\depth_{R_\fp}{(\Q_\fp)}\geq 0$, so the inequality holds.

Now suppose $\hht(\fp)\geq 3$.
Since every associated prime of $\Q$ has height $\le 2$, $\fp$ itself cannot be an associated prime of $\Q$. Hence $\depth_{R_\fp}{(\Q_\fp)}\geq 1$.
Set $t=\depth_{R_\fp}{(\Q_\fp)}$. 
Choose a maximal regular sequence $\underline{x} = x_1,\dots,x_t$ in $\fp R_\fp$ on $\Q_\fp$, 
and let $\Q'=\Q_\fp / (\underline{x})\Q_\fp$. 
Then $\depth_{R_\fp}{(\Q'_\fp)}= 0$, so the maximal ideal $\fp R_\fp$ is an associated prime of $\Q'$ (since a module over a local ring has depth zero if and only if its maximal ideal is associated). 
The associated primes of $\Q'$ are among those of $\Q_\fp$, which in turn are contained in the set of associated primes of $\Q$ contained in $\fp$.
By hypothesis, these primes have height \(\le 2\). Therefore, $ \dim(\Q') \le 2$. 
On the other hand, because $\underline{x}$ is a regular sequence, $\dim(\Q')=\dim(\Q_\fp)-t\leq \hht(\fp)-t$
Combining these dimension gives $\hht(\fp)-t\leq 2$, i.e. $t\geq \hht(\fp)-2$. 
Thus $\depth_{R_\fp}{\Q_\fp}\geq \hht(\fp)-2$ which implies that $\syz(\Theta)_\fp$ is free on the punctured spectrum.

The part (b) follows from (a) and the fact that the minimal primes of $\Q$ coincide with the minimal primes of $R/I_2(\Theta)$. 
\end{proof}
\begin{Example}
Let $R=k[x_1,\ldots,x_n]$ with $n\geq 4$. 
Consider the matrix 
\[
\Theta=\begin{bmatrix}
	f_1&f_2&0&0\\
	0&0&g_1&g_2
\end{bmatrix}
\]
with $f_1,f_2$ and $g_1,g_2$ in disjoint sets of variables. One has
 $I_2(\Theta)=(f_1,f_2)\cap (g_1,g_2)$ and $\hht(I_2(\Theta))=2$.  Then $\depth(\Q)=\depth (R/(f_1,f_2)\oplus R/(g_1,g_2))=n-2$ and  the graded minimal free resolution of $\Q$ is of the form 
\[0\rar R(-(d_1+d_2))\oplus R(-2d_2)\rar R^4(-d_2)\rar R\oplus R(d_1-d_2)\rar \Q\rar 0.\]
Therefore, $\Theta$ is free. 
\end{Example}
\begin{Proposition}\label[Proposition]{dimQ-n-3-BR-resolution}
If $\dim(\Q)\leq n-3$, then $\Theta$ is not free. 
\end{Proposition}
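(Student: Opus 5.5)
The plan is to compare depth with dimension. By \Cref{free-locallyfree}, $\Theta$ is free exactly when $\depth(\Q)=n-2$. On the other hand, for any nonzero finitely generated graded module over $R$ one has $\depth(\Q)\le\dim(\Q)$. So if $\dim(\Q)\le n-3$, then $\depth(\Q)\le n-3<n-2$, and $\Theta$ cannot be free. It therefore suffices to check that $\Q\neq 0$ and that the characterization of freeness via depth is valid in this situation.

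First, $\Q\neq0$. Indeed $\Q=\cok(\Theta)$ with $\Theta\colon R^4\to R(d_1)\oplus R(d_2)$, and $\supp(\Q)=V(I_2(\Theta))$ with $I_2(\Theta)\subseteq\fm$ whenever $d_1+d_2>0$. The degenerate case $d_1=d_2=0$ must be ruled out separately: if both rows were constant, the gcd-$1$ hypothesis together with rank $2$ would make $\Theta$ surjective. Then $\Q=0$, so $\dim \Q$ is not $\le n-3$ in the meaningful sense. Alternatively one can just invoke the standing bound $1\le\dim\Q$ recorded at the start of \S\ref{subsec:graded-free-res}.

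Second, the depth characterization. Suppose $\syz(\Theta)$ is free. Then $0\to\syz(\Theta)\to R^4\to R(d_1)\oplus R(d_2)\to\Q\to0$ is a graded free resolution of length at most $2$, so $\pd(\Q)\le2$. By Auslander--Buchsbaum, $\depth\Q=n-\pd\Q\ge n-2$. Combined with $\depth\Q\le\dim\Q\le n-3$, this is a contradiction.

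Note that this direction uses only the implication ``free $\Rightarrow \pd\le 2$'', so the proof needs no minimality of the resolution. The only step that needs care is the inequality $\depth\le\dim$ for nonzero modules, which is standard (every associated prime $\fp$ satisfies $\depth \Q\le\dim R/\fp$). I expect no real obstacle beyond making sure $\Q\ne0$.
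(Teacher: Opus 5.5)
Your proof is correct, but it follows a different route from the paper's.

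The paper uses $\dim\Q\le n-3$ to get $\grade(I_2(\Theta))\ge 3$. It then invokes the Buchsbaum--Rim complex \eqref{BRCmain} as a resolution of $\Q$ and reads off $\depth(\Q)=n-3\neq n-2$.

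You never need the Buchsbaum--Rim complex. Freeness of $\syz(\Theta)$ gives $\pd\Q\le 2$, hence $\depth\Q\ge n-2$ by Auslander--Buchsbaum. This contradicts $\depth\Q\le\dim\Q\le n-3$ once $\Q\neq 0$. Your argument is more elementary and slightly more general: it is exactly a proof of the remark in \S\ref{subsec:graded-free-res} that freeness forces $\hht(I_2(\Theta))\le 2$. It also does not depend on any resolution being minimal.

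On that point, a length-three resolution by itself only gives $\depth\Q\ge n-3$. The equality the paper states also needs a lower bound on $\pd\Q$. That bound comes either from minimality of \eqref{BRCmain}, which fails when $d_1=0$ (e.g.\ for the complete intersections of \S\ref{sec:Bour-of-ideals}), or from $\pd\Q\ge\grade\Q$, which is your inequality $\depth\le\dim$ in another form. What the paper's route buys in return is the explicit resolution, which it reuses in \Cref{BourM-q} to obtain $e=d_1+d_2$ and $e_0(\Q)=e_1(\Q)=0$.

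Your handling of $\Q\neq 0$ is also right. In the constant case $d_1=d_2=0$ one has $\Q=0$ and $\syz(\Theta)\simeq R^2$. So the statement genuinely relies on the standing convention $1\le\dim\Q$, and the paper's proof relies on it implicitly as well.
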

\begin{proof}
If $\dim \Q\leq n-3$, then $\grade(I_2(\Theta))\geq 3$. Thus the module $\Q$ has a Buchsbaum--Rim  resolution~\eqref{BRCmain} and hence $\depth(\Q)=n-3$. 
\end{proof}

\subsection{The initial degree}
Recall that, for a graded module $E=\bigoplus_{i\geq 0} E_i$ over an 
$\mathbb{N}$-graded Noetherian ring, its \emph{initial degree} is defined by
\[
\indeg(E) := \min\{\, i \mid E_i \neq 0\,\}.
\]
We denote by
\[
e := \indeg(\syz(\Theta))
\]
the initial degree of the graded $R$-module $\syz(\Theta)$.
For $1\le i<j\le 4$, let $\Delta_{ij}=f_ig_j-f_jg_i$ denote the $2$-minors of $\Theta$ 
determined by columns $i$ and $j$. The columns of the skew-symmetric matrix
\begin{equation}\label{BRsyzygy}
	\begin{bmatrix}
		0 & \Delta_{12} & \Delta_{13} & \Delta_{14} \\
		-\Delta_{12} & 0 & \Delta_{23} & \Delta_{24} \\
		-\Delta_{13} & -\Delta_{23} & 0 & \Delta_{34} \\
		-\Delta_{14} & -\Delta_{24} & -\Delta_{34} & 0
	\end{bmatrix}
\end{equation}
give homogeneous elements of $\syz(\Theta)$. Since each $\Delta_{ij}$ has degree
$d_1+d_2$, this produces elements of $\syz(\Theta)$ in degree $d_1+d_2$, and
therefore
\begin{equation}\label{UPindeg}
	0 \le e \le d_1+d_2.
\end{equation}

The map $\Theta$ is the direct sum of the maps $\Theta_\mathbf{f}: R^4\rar R(d_1)$ and $\Theta_\mathbf{g}: R^4\rar R(d_2)$ defined by the individual rows $\mathbf{f}=\begin{bmatrix}
f_1&f_2&f_3&f_4
\end{bmatrix}\ \mbox{and} \  \mathbf{g}=\begin{bmatrix}
g_1&g_2&g_3&g_4
\end{bmatrix} $ of the matrix $\Theta$. We obtain the following exact sequences of graded $R$-modules
\begin{equation}\label{Mf}
0\rar \syz(\Theta_\ff)\rar R^4\stackrel{\Theta_\ff}{\lar} R(d_1)\rar R/\Jc_\ff\rar 0,
\end{equation}
 and 
 \begin{equation}\label{Mg}
 0\rar \syz(\Theta_\gg)\rar R^4\stackrel{\Theta_\gg}{\lar} R(d_2)\rar R/\Jc_\gg\rar  0,
 \end{equation}
 where $\Jc_\ff$ and $\Jc_\gg$ are the ideals generated by the first and second rows of $\Theta$, respectively. Note that $\syz(\Theta_\ff)$ and $\syz(\Theta_\gg)$ are reflexive modules of rank $3$. 
One has 
\[ \syz(\Theta)=\{\nu \in R^4\ |\  \Theta_\mathbf{f}(\nu)=0\ \mbox{and}\ \Theta_\mathbf{g}(\nu)=0 \}=\syz(\Theta_\mathbf{f})\cap \syz(\Theta_\mathbf{g}).\]
Let  $e_{\mathbf{f}}$ and $e_{\mathbf{g}}$ stand for the initial degrees of graded modules $\syz(\Theta_{\mathbf{f}})$  and $\syz(\Theta_{\mathbf{f}})$, respectively. From the equality $\syz(\Theta)=\syz(\Theta_\mathbf{f})\cap \syz(\Theta_\mathbf{g})$, we conclude that 
\[t:=\max\{{e_{\mathbf{f}}},\, e_{\mathbf{g}}\} \leq e\leq d_1+d_2.\]

\begin{Lemma}\label[Lemma]{e.max}
 If $\To{R}{1}{R/\Jc_\ff}{R/\Jc_\gg}_{t+d_1+d_2}\neq 0$, then $e=t$. 
\end{Lemma}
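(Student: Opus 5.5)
The plan is to show directly that $\syz(\Theta)_t\neq 0$. Since $t\le e$ has already been established, this gives $e=t$. The link between $\syz(\Theta)=\syz(\Theta_\ff)\cap\syz(\Theta_\gg)$ and the Tor module will come from comparing the two row-syzygy modules inside $R^4$. Consider the exact sequence of graded modules
\[
0\lar \syz(\Theta)\lar \syz(\Theta_\ff)\oplus\syz(\Theta_\gg)\xrightarrow{\ (u,v)\mapsto u-v\ } R^4\lar R^4/(\syz(\Theta_\ff)+\syz(\Theta_\gg))\lar 0 .
\]
A nonzero element of $\syz(\Theta)$ in degree $t$ is the same thing as a nontrivial relation in degree $t$ between elements of the two row-syzygy modules. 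So I will try to produce such a relation from a nonzero Tor class.

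Step one is to compute $\To{R}{1}{R/\Jc_\ff}{R/\Jc_\gg}$ by tensoring the presentation from \eqref{Mf} with $R/\Jc_\gg$. Write $R/\Jc_\ff$ as the cokernel of $\Theta_\ff$; then its first syzygy is $\Jc_\ff(d_1)\simeq R^4/\syz(\Theta_\ff)$. This gives
\[
\To{R}{1}{R/\Jc_\ff}{R/\Jc_\gg}\simeq \ker\Big(R^4/(\syz(\Theta_\ff)+\Jc_\gg R^4)\xrightarrow{\ \Theta_\ff\ } (R/\Jc_\gg)(d_1)\Big),
\]
with the appropriate twist $d_2$. Equivalently, it is $(\Jc_\ff\cap\Jc_\gg)/\Jc_\ff\Jc_\gg$ suitably shifted. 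A nonzero class in the stated degree is represented by a homogeneous vector $u\in R^4$ such that:
\begin{itemize}
\item $\Theta_\ff(u)=\sum_i b_i g_i$, i.e. $\Theta_\ff(u)$ lies in $\Jc_\gg$;
\item $u\notin \syz(\Theta_\ff)+\Jc_\gg R^4$.
\end{itemize}

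Step two is to convert this into a syzygy of $\Theta$. The idea is to use the element $h=\Theta_\ff(u)=\sum b_i g_i$, which lies in $\Jc_\ff\cap\Jc_\gg$ and is not in $\Jc_\ff\Jc_\gg$. Its two expressions $u$ and $b=(b_1,\dots,b_4)$ should be modified by elements of $\syz(\Theta_\ff)$ and $\syz(\Theta_\gg)$, chosen in the minimal degrees $e_\ff,e_\gg\le t$, to obtain a common vector killed by both rows. Concretely, I will run a Hilbert function count in degree $t$ along the four-term sequence above:
\[
\dim_k\syz(\Theta)_t=\dim_k\syz(\Theta_\ff)_t+\dim_k\syz(\Theta_\gg)_t-\dim_k R^4_t+\dim_k\big(R^4/(\syz(\Theta_\ff)+\syz(\Theta_\gg))\big)_t .
\]
Then I will identify the last term with the homogeneous component of the Tor module in degree $t+d_1+d_2$, corrected by known terms coming from $\Jc_\ff$ and $\Jc_\gg$ in degree $t$. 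The goal is to show that all the correction terms cancel, so that the right-hand side is at least $\dim_k \To{R}{1}{R/\Jc_\ff}{R/\Jc_\gg}_{t+d_1+d_2}>0$.

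The main obstacle is this identification. I must check that $R^4/(\syz(\Theta_\ff)+\syz(\Theta_\gg))$, or the kernel of the induced map to $\Jc_\ff(d_1)\oplus\Jc_\gg(d_2)$, matches the Tor module exactly in the relevant degree. This requires careful bookkeeping of the twists $d_1,d_2$ in \eqref{Mf} and \eqref{Mg}; that bookkeeping is where the index $t+d_1+d_2$ comes from. If the dimension count does not close cleanly, my fallback is a direct element-wise construction. Starting from a representative $u$ as above, I will take the vector $\Theta_\gg(u)$-twisted combination $g$-weighted against $b$, i.e. a Koszul-type correction, and verify by hand that it is a nonzero syzygy of $\Theta$ of degree $t$. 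Nonvanishing would follow from $u\notin\syz(\Theta_\ff)+\Jc_\gg R^4$.
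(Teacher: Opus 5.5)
Your Step one is fine: $\Tor_1^R(R/\Jc_\ff,R/\Jc_\gg)$ is indeed computed by tensoring the presentation of $R/\Jc_\ff$ with $R/\Jc_\gg$. Step two fails at exactly the point you flag as the main obstacle, and it cannot be repaired. Your four-term sequence only gives the tautology
\[
\dim_k\syz(\Theta)_t=\dim_k\syz(\Theta_\ff)_t+\dim_k\syz(\Theta_\gg)_t-\dim_k\big(\syz(\Theta_\ff)+\syz(\Theta_\gg)\big)_t ,
\]
and this involves only vectors of degree $t$. A nonzero class of $\Tor_1^R(R/\Jc_\ff,R/\Jc_\gg)\cong(\Jc_\ff\cap\Jc_\gg)/\Jc_\ff\Jc_\gg$ in degree $t+d_1+d_2$ is a different kind of object. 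It is a pair $(a,b)\in R^4_{t+d_2}\times R^4_{t+d_1}$ with $\Theta_\ff(a)=\Theta_\gg(b)$, that is, a relation among the eight entries of the concatenated row $(\ff\mid\gg)$. A syzygy of $\Theta$, by contrast, is a single vector annihilated by both rows. No bookkeeping of twists identifies the degree-$t$ part of $R^4/(\syz(\Theta_\ff)+\syz(\Theta_\gg))$ with this Tor component. Your fallback ``Koszul-type correction'' of $u$ against $b$ also has no mechanism for producing a nonzero element of $\syz(\Theta)_t$ from vectors of degrees $t+d_2$ and $t+d_1$.

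In fact the inequality you are aiming for is false, and so is the statement. Take
\[
\Theta=D_1\mid D_1=\begin{bmatrix}x_1&0&x_2&0\\0&x_1&0&x_2\end{bmatrix}
\]
over $R=k[x_1,\dots,x_n]$ with $n\ge 3$. It has rank $2$, $d_1=d_2=1$, and each row has $\gcd$ equal to $1$. Here $e_2,e_4\in\syz(\Theta_\ff)$ and $e_1,e_3\in\syz(\Theta_\gg)$, so $t=0$. On the other hand, $\syz(\Theta)\simeq R(-1)^2$ is generated by $(x_2,0,-x_1,0)$ and $(0,x_2,0,-x_1)$, so $e=1$. Since $\Jc_\ff=\Jc_\gg=\fp=(x_1,x_2)$, we get $\Tor_1^R(R/\fp,R/\fp)\cong\fp/\fp^2$. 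Its component of degree $2=t+d_1+d_2$ has dimension $2(n-2)>0$; for instance, the class of $x_1x_3$ is nonzero. Your count in degree $0$, however, gives $2+2-4+0=0$.

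The same example breaks the Claim in the paper's own proof. There $h=x_1x_3=f_1\cdot x_3$, and no representation $h=\sum f_ia_i$ can have all $a_i\in\Jc_\gg$, since that would put $h$ in $\Jc_\ff\Jc_\gg$. Yet $\syz(\Theta)_0=0$, so the ``comparison of the degree-$t$ coefficients'' is not a valid step.

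A second counterexample is $D_2\mid B_1$ over $k[x_1,\dots,x_5]$. There $t=0$ and $e=2$, while $x_1x_5$ is a nonzero class of $\Tor_1$ in degree $2$. So the lemma cannot be proved as stated; it needs a different hypothesis, not a better choice of twist.
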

\begin{proof}
 we already have $t\le e$. Thus, it is enough to prove $e\leq t$. From the exact sequence
\[
0\to \Jc_f\to R\to R/\Jc_f\to 0,
\]
tensoring with $R/\Jc_g$ gives the standard graded isomorphism
\[
\Tor_1^R(R/\Jc_f,R/\Jc_g)\cong \frac{\Jc_f\cap \Jc_g}{\Jc_f\Jc_g}.
\]
Hence there exists
$
h\in (\Jc_f\cap \Jc_g)_{t+d_1+d_2}\setminus (\Jc_f\Jc_g)_{t+d_1+d_2}.
$
Write
\[
h=\sum_{i=1}^4 f_i a_i=\sum_{i=1}^4 g_i b_i,
\qquad
a_i\in R_{t+d_2},\ b_i\in R_{t+d_1}.
\]
Reducing modulo $\Jc_g$, we get
$\sum_{i=1}^4 \overline{f_i}\,\overline{a_i}=0
$ in $R/\Jc_g$.

\medskip
\noindent
\textit{Claim.} If $\syz(\Theta)_t=0$, then $a_i\in \Jc_g$ for all $i$.

\smallskip
\noindent
\textit{Proof of the claim.}
If not, then $\overline a\neq 0$ in $(R/\Jc_g)^4$. Writing
\[
a_i=\sum_{j=1}^4 g_j c_{ij}+r_i,
\qquad c_{ij}\in R_t,
\]
with some $r_i\notin \Jc_g$, and substituting into
$ \sum_{i=1}^4 f_i a_i=\sum_{i=1}^4 g_i b_i$
we obtain $\sum_{i=1}^4 f_i r_i\in \Jc_g$. 
Since $\deg r_i=t+d_2$ and $\deg g_j=d_2$, comparison of the degree-$t$ coefficients yields a nonzero vector $\nu\in R_t^4$ satisfying $\sum_{i=1}^4 f_i\nu_i=0$ and $\sum_{i=1}^4 g_i\nu_i=0$ that is, $\nu\in \syz(\Theta)_t$, a contradiction. Thus $a_i\in \Jc_g$ for all $i$.

If $\syz(\Theta)_t=0$, the claim gives $h\in \Jc_f\Jc_g$, contradicting the choice of $h$. Hence $\syz(\Theta)_t\neq 0$, so $e\le t$. 
\end{proof}
\begin{Example}\label[Example]{D2B1}
 Let $R=k[x_1,x_2,x_3,x_4]$ and
\[
\Theta=
\begin{bmatrix}
x_1 & x_2 & 0 & x_3\\
0 & x_1 & x_2 & x_4
\end{bmatrix}.
\]
Since the third entry of the first row and the first entry of the second row are zero, it follows that the canonical vectors $e_3$ and $e_1$ are syzygies of $\Theta_\ff$ and $\Theta_\gg$, respectively. Then $e_\ff=e_\gg=0$. One check that $(\Jc_\ff\cap \Jc_\gg)_2\subseteq \Jc_\ff\Jc_\gg$, hence the condition of Lemma~\ref{e.max} fails, and consequently $e\neq 0$. Now we show that $e=2$. 

Recall that $\syz(\Theta)=\syz(\Theta_\ff)\cap \syz(\Theta_\gg)$
where $\ff$ and $\gg$ denote the first and second rows of $\Theta$, respectively. We first describe these syzygy modules.

For the first row, the syzygy module $\syz(\Theta_\ff)$ consists of all $(a,b,c,d)\in R^4$ such that
$x_1a+x_2b+x_3d=0.$ Since the third entry of $\ff$ is zero, the element $e_3=(0,0,1,0)$
belongs to $\syz(\Theta_\ff)$. Moreover, the ideal $(x_1,x_2,x_3)$ is a complete
intersection, so its first syzygy module is generated by the Koszul syzygies.
Lifting these to $R^4$ gives the following homogeneous generators:
\[
s_1=e_3,\,\ \   s_2=(x_2,-x_1,0,0),\,\ \   s_3=(x_3,0,0,-x_1),\,\ \  s_4=(0,x_3,0,-x_2).
\]
Thus $\syz(\Theta_\ff)$ is generated in degrees $0$ and $1$, with initial degree $e_\ff=0$. 

For the second row the syzygy module $\syz(\Theta_\gg)$ consists of all $(a,b,c,d)\in R^4$ such that
$x_1b+x_2c+x_4d=0$. Here the first entry of $g$ is zero, so $e_1=(1,0,0,0)\in \syz(\Theta_\gg)$.
Again, $(x_1,x_2,x_4)$ is a complete intersection, yielding generators
\[
t_1=e_1,\, \ \ t_2=(0,x_2,-x_1,0),\, \ \ t_3=(0,x_4,0,-x_1),\, \ \ t_4=(0,0,x_4,-x_2).
\]
Hence $\syz(\Theta_\gg)$ is also generated in degrees $0$ and $1$, with initial degree $e_\gg=0$

A constant vector $(a,b,c,d)\in k^4$ lies in $\syz(\Theta_\ff)$ if and only if  $a=b=d=0$,
and it lies in $\syz(\Theta_\gg)$ if and only if  $b=c=d=0$. Together this forces
$a=b=c=d=0$, hence $\syz(\Theta)_0=0$. 

The space $\syz(\Theta_\ff)_1$ is spanned by $s_2,s_3,s_4$. We compute
\[
\Theta_\gg(s_2)=-x_1^2,\quad
\Theta_\gg(s_3)=-x_1x_4,\quad 
\Theta_\gg(s_4)=x_1x_3-x_2x_4.
\]
A linear combination $\alpha s_2+\beta s_3+\gamma s_4$ with $\alpha,\beta,\gamma\in k$
lies in $\syz(\Theta_\gg)$ if and only if 
\[
\Theta_\gg(\alpha s_2+\beta s_3+\gamma s_4)
=
-\alpha x_1^2-\beta x_1x_4+\gamma x_1x_3-\gamma x_2x_4
=0.
\]
Since the monomials $x_1^2,x_1x_4,x_1x_3,x_2x_4$ are linearly independent in $R_2$,
we must have $\alpha=\beta=\gamma=0$. Therefore
$\syz(\Theta)_1=0$ which implies that $e=2$.
\end{Example}

We finish this section with the following result, reminiscent of the notion of \emph{compressibility} for sequences (\cite[Section 2.4]{Faenzi2025}), showing $\Theta$ is free when the initial degree is zero.

\begin{Proposition}\label[Proposition]{prop:compressibility}
Let $\Theta$ be a $2 \times 4$ matrix of rank two with entries in $R$. Then $e = \indeg(\syz(\Theta)) = 0$ if and only if there is a change of coordinates such that $\Theta$ can be expressed as a matrix with a zero column:
$$
\Theta = \begin{pmatrix}
0 & f_2' & f_3' & f_4'\\
0 & g_2' & g_3' & g_4'
\end{pmatrix}.
$$
In particular, one obtains $\syz(\Theta) \simeq R \oplus R(s)$ for some $s \geq 0$.
\end{Proposition}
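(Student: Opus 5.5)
The equivalence is essentially linear algebra in degree zero. The second claim then comes from splitting off the free summand spanned by the constant syzygy.

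\emph{($\Leftarrow$).} Suppose that after a change of coordinates $\Theta$ has a zero first column. By ``change of coordinates'' I mean an invertible $k$-linear change of basis $P\in\GL_4(k)$ of $R^4$, which respects the grading because the source is $R^4$ with all generators in degree $0$. Then the first basis vector lies in $\syz(\Theta\circ P)_0$. Since $\syz(\Theta)=P\cdot\syz(\Theta\circ P)$ and $P$ is a graded isomorphism, we get $\syz(\Theta)_0\neq 0$, hence $e=0$.

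\emph{($\Rightarrow$).} If $e=0$, pick a nonzero $v\in\syz(\Theta)_0\subseteq k^4$. Thus $v$ is a constant vector with $\sum f_iv_i=0$ and $\sum g_iv_i=0$. Complete $v$ to a basis of $k^4$ and let $P\in\GL_4(k)$ be the matrix with first column $v$. The first column of $\Theta P$ is $\Theta v=0$, and the remaining columns give the $f'_j,g'_j$. Each $f'_j$ is a $k$-linear combination of the $f_i$, so it is homogeneous of degree $d_1$; likewise each $g'_j$ has degree $d_2$. The rank of the matrix is unchanged.

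\emph{The splitting.} Replace $\Theta$ by $\Theta'=\Theta P$. Then $\syz(\Theta')=R\cdot e_1\oplus\syz(\Theta'')$, where $\Theta''$ is the $2\times 3$ matrix formed by columns $2,3,4$. Indeed, a vector $(a,b,c,d)$ is a syzygy of $\Theta'$ if and only if $(b,c,d)$ is a syzygy of $\Theta''$, with $a$ arbitrary. Since $\Theta$ has rank $2$, so does $\Theta''$, and hence $\syz(\Theta'')$ is a reflexive rank one module. By the same argument as for $\syz(\Theta)$, it is the second syzygy of $\cok\Theta''$. A reflexive rank one graded module over the UFD $R$ is free, so $\syz(\Theta'')\simeq R(-s')$.

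To identify the shift, I would argue via Cramer's rule. The vector of signed $2$-minors $(\Delta'_{34},-\Delta'_{24},\Delta'_{23})$ of $\Theta''$ spans the kernel over the fraction field. Therefore $\syz(\Theta'')$ is generated by this vector divided by the gcd of its entries, which has degree $s'\ge 0$ (at most $d_1+d_2$). This gives $\syz(\Theta)\simeq R\oplus R(-s')$.

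The statement writes $R(s)$ with $s\ge 0$. I will interpret this as $R\oplus R(-s)$, i.e., up to the sign convention for the shift. This sign bookkeeping is the only point I expect to need care, together with checking that the rank-one reflexive module is indeed free.
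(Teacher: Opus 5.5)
Your proof is correct and follows the paper's route. You take a constant syzygy, move it to $e_1$ by a change of basis of $k^4$, and split off $R\cdot e_1$, using that the remaining rank-one kernel is reflexive and hence free over the UFD $R$. Your Cramer's-rule description of the generator is a harmless extra: it pins down $0\le s'\le d_1+d_2$ and confirms that the correct form is $R\oplus R(-s)$ with $s\ge 0$, as you suspected (the paper's own proof writes $R(-s)$ in its first line, despite the statement's $R(s)$).
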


\begin{proof}
If $\syz(\Theta) \simeq R \oplus R(-s)$ for some $s \geq 0$, then it follows that $e = \indeg(\syz(\Theta)) = 0$. Therefore, there is a constant non-trivial syzygy, given by $\nu = (a,b,c,d)^T$ such that $\Theta \cdot \nu = 0$. Now, consider a change of coordinates of $k^4$ which takes $\nu$ to the vector $e_1 = (1,0,0,0)$. The matrix $\Theta$ in this new coordinates satisfies $\Theta \cdot e_1 = 0$, and we obtain the form of the matrix of the claim. So one has a block matrix
$$
\Theta = \begin{pmatrix}
0 & \Theta'
\end{pmatrix}
$$
where $\Theta'$ is a $2 \times 3$ matrix with homogeneous entries. Forming the following commutative diagram with exact rows, where $M_\nu \doteq \cok(\nu)$,
\begin{center}
\begin{tikzcd}
            & R \arrow[d, "\nu"] \arrow[r, Rightarrow, no head] & R \arrow[d, "e_1"]                           &                                                             &   \\
0 \arrow[r] & \syz(\Theta) \arrow[r] \arrow[d, two heads]       & R^4 \arrow[r, "\Theta"] \arrow[d, two heads] & \mathcal{N}_\Theta \arrow[r] \arrow[d, Rightarrow, no head] & 0 \\
0 \arrow[r] & M_\nu \arrow[r]                                   & R^3 \arrow[r, "\Theta'"]                     & \mathcal{N}_{\Theta} \arrow[r]                              & 0
\end{tikzcd}
\end{center}
one concludes $\rank(M_\nu) = 1$ and $M_\nu$ is reflexive, as $\mathcal{N}_{\Theta}$ is torsion-free. Hence $M_\nu \simeq R(s)$ for some $s \in \mathbb{Z}$, the sequence splits so $\syz(\Theta) \simeq R \oplus R(s)$ and the condition about the initial degree gives $s \geq 0$.
\end{proof}

\subsection{Hilbert polynomials and multiplicities}

A quick deflection to Hilbert multiplicities (see, for example, \cite[Section 4.1]{bruns1998cohen}). Let $M=\oplus_{m\geq 0}M_m$ be a graded $R=k[x_1,\ldots,x_n]$-module of dimension $d>0$. The Hilbert function of $M$ is defined by $\mathrm{HF}_M(m)=\dim_kM_m$. For sufficiently large $m$, this function agrees with a polynomial of degree $d-1$ given by
\begin{align*}
\mathrm{HP}_M(t) &= e_0(M) \binom{t + d-1}{d-1} - e_1(M) \binom{t+d-2}{d-2} + \ldots + (-1)^{d-1} e_{d-1}(M)\\
&= \dfrac{e_0(M)}{(d-1)!}t^{d-1} + \dfrac{e_0(M)d-2e_1(M)}{2(d-2)!}t^{d-2} + \mbox{(lower order terms in $t$)}
\end{align*}
with $e_0 \in \mathbb{Z}$ positive integer called the \emph{multiplicity} or \emph{degree} of $M$, commonly denoted by $\deg(M)$ and $e_1,\ldots,e_{d-1}\in \mathbb{Z}$ are the \emph{Hilbert coefficients} of $M$.

The Hilbert series of a finitely generated graded $R$-module $M$ of dimension $d$ can be written as
\[
H_M(t)
= \sum_{i \in \mathbb{Z}} \dim_{k}(M_i)\, t^i
= \frac{h(t)}{(1 - t)^d},
\]
where $h(t) \in \mathbb{Z}[t,t^{-1}]$ and $h(1) \neq 0$.
The Hilbert coefficients satisfy (see \cite[Proposition 4.1.9]{bruns1998cohen})
\[
e_0(M) = h(1)
\qquad \text{and} \qquad
e_1(M) = h'(1).
\]
These coincide with the usual Hilbert coefficients obtained from the Hilbert polynomial of $M$. We will be mostly interested in the case $M = \Q$ of the cokernel of a $(2\times 4)$-matrix of rank two. In general, the dimension of $\Q$ is at most $d = n-1$, as we impose the condition of maximal rank. Therefore, we fix $d = n-1$, and obtain three possible behaviours:
\begin{itemize}
    \item[(a)] $\codim(\Q) = 1$ if and only if $e_0(\Q) \neq 0$;
    \item[(b)] $\codim(\Q) = 2$ if and only if $e_0(\Q) = 0$ and $e_1(\Q) \neq 0$.
    \item[(c)] $\codim(\Q) \geq 3$ if and only if $e_0(\Q) = e_1(\Q) = 0$, where the Buchsbaum--Rim resolution is minimal, as we explored before.
\end{itemize}

A useful formula to compute Hilbert coefficients concretely is the \emph{associativity formula} (see, for example, \cite[Theorem 14.7]{Matsumura1987}). For $d = \dim(\Q)$, let
$$
\Min_d(\Q) \doteq \{\fp \in \Ann(\Q) : \dim(R/\fp) = d\},
$$
so the formula, when $\dim(\Q) = n-1$, can be written as:
$$
e_0(\Q) = \sum_{\fp \in \Min_{n-1}(\Q)} \length_{R_\fp}(\Q_\fp) \cdot \deg(R/\fp).
$$
Analogously, when $\Q$ has dimension $n-2$, then $e_0(\Q) = 0$ and
$$
e_1(\Q) = \sum_{\fp \in \Min_{n-2}(\Q)} \length_{R_\fp}(\Q_\fp) \cdot \deg(R/\fp).
$$

The first two Hilbert coefficients $e_0(\Q)$ and $e_1(\Q)$ will be the essential discrete invariants for our formula for the Bourbaki degree, obtained in the next section. The following basic result ensures that these stay constant after changing to a polynomial ring with more variables, and we add a proof for the sake of completeness. 

\begin{Lemma}\label[Lemma]{Lem:ext-of-var}
Assume that $\Theta$ is a matrix with entries in the polynomial ring $R = k[x_1, \ldots, x_m]$, and denote $M = \Ker(\Theta)$ as an $R$-module. Consider the same matrix with entries in a ring $S = k[x_1, \ldots, x_n]$ with $m \leq n$, so that $\widetilde{M} = \Ker(\Theta \otimes_k S)$. If $d = \dim(M)$, then $e_0(M) = e_0(\widetilde{M})$ and $e_1(M) = e_1(\widetilde{M})$.

Moreover, if $
K_{\bullet} \to M$ is a minimal free resolution for $M$ over $R$, then $K_\bullet \otimes S \to \widetilde{M}$ is a minimal free resolution for $\widetilde{M}$. 
\end{Lemma}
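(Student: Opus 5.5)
The whole statement rests on one fact: $S=k[x_1,\ldots,x_n]$ is a free, hence faithfully flat, graded $R$-module. So I will first prove the claim about resolutions and then deduce the equality of Hilbert coefficients from it.

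\emph{Step 1: kernels and resolutions base change.} Write $\Theta$ as a graded map $F_0=R^4\to F_{-1}=R(d_1)\oplus R(d_2)$. Since $-\otimes_R S$ is exact, tensoring $0\to M\to F_0\to F_{-1}$ gives an exact sequence $0\to M\otimes_R S\to S^4\to S(d_1)\oplus S(d_2)$. Hence $\widetilde{M}=\Ker(\Theta\otimes S)\cong M\otimes_R S$ as graded $S$-modules.

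Now let $K_\bullet\to M$ be a minimal graded free resolution over $R$. Exactness of $-\otimes_R S$ shows that $K_\bullet\otimes_R S$ is a graded free resolution of $M\otimes_R S=\widetilde{M}$. For minimality, the differentials of $K_\bullet$ have entries in $\fm_R=(x_1,\ldots,x_m)$. The differentials of $K_\bullet\otimes S$ are given by the same matrices, whose entries lie in $\fm_R S\subseteq \fm_S$. So the base-changed resolution is again minimal, with the same graded Betti numbers.

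\emph{Step 2: Hilbert series.} Let $T=k[x_{m+1},\ldots,x_n]$. As graded $k$-vector spaces, $M\otimes_R S\cong M\otimes_k T$, so
\[
H_{\widetilde M}(t)=H_M(t)\cdot\frac{1}{(1-t)^{n-m}}.
\]
If $H_M(t)=h(t)/(1-t)^{d}$ with $h(1)\neq 0$, then
\[
H_{\widetilde M}(t)=\frac{h(t)}{(1-t)^{d+n-m}}.
\]
Thus $\widetilde M$ has the same numerator polynomial $h(t)$, and its dimension increases by $n-m$.

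By the formulas $e_0=h(1)$ and $e_1=h'(1)$ from \cite[Proposition 4.1.9]{bruns1998cohen} quoted above, the coefficients computed from the reduced Hilbert series are unchanged:
\[
e_0(\widetilde M)=e_0(M), \qquad e_1(\widetilde M)=e_1(M).
\]
For completeness I also note the alternative route. The Hilbert series is determined by the graded Betti numbers via $H(t)=\sum_i(-1)^i\sum_j\beta_{ij}t^j/(1-t)^{\#\mathrm{vars}}$. Step 1 shows the Betti numbers are equal, so the numerator is literally the same polynomial; only the denominator exponent changes.

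\emph{Step 3: the obstacle, which is bookkeeping.} The real issue is a normalization convention. The paper fixes a reference dimension $d=n-1$ for $\Q$, and the statement says ``$d=\dim(M)$''. So the claim must be read as: the coefficients are taken with respect to the actual dimension of each module. With that reading, $h(t)$ is the same and $e_0,e_1$ coincide.

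If instead one insists on a fixed reference exponent, one has to check that the vanishing pattern is preserved. This holds because the codimension of $\widetilde M$ in $S$ equals the codimension of $M$ in $R$: dimension and number of variables both increase by $n-m$. Hence the trichotomy (a)--(c) of the preceding discussion is preserved, and $e_0,e_1$ agree in each case. The same argument applied to the exact sequences \eqref{FundSequ} gives the analogous statement for $\Q$, which is how the lemma will be used.
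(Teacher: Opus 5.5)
Your proof is correct and is essentially the paper's argument. Flatness of $S$ over $R$ gives $\widetilde{M}\cong M\otimes_R S$, and the Hilbert series is multiplied by $(1-t)^{-(n-m)}$, so the numerator $h(t)$, and hence $e_0=h(1)$ and $e_1=h'(1)$, is unchanged; the base-changed resolution stays minimal since its matrices have entries in $\mathfrak{m}_R S\subseteq \mathfrak{m}_S$. Your extra remarks are accurate refinements rather than a different route: you read off $\dim\widetilde{M}=d+n-m$ from the pole order instead of from annihilators, and you check that the fixed-exponent convention and the application to $\mathcal{Q}=\operatorname{coker}(\Theta)$ behave the same way.
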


\begin{proof}
First, since $S$ is flat over $R$, it follows that $\widetilde{M} \simeq M \otimes_{k} S$. Moreover, from this identity, the Hilbert series is the product:
$$
\Hs_{\widetilde{M}}(t) = \Hs_{M}(t) \cdot \Hs_{S}(t)=\dfrac{h(t)}{(1-t)^d}\cdot\dfrac{1}{(1-t)^{n-m}}=\dfrac{h(t)}{(1-t)^{d+n-m}}.
$$
If $\fp=\Ann_R{M}$, then $\Ann_S{M}=\fp S$ and $\dim S/\fp S=\dim R/\fp+n-m=d+(n-m)$. Thus $\dim_S \widetilde{M}=d+(n-m)$ and the dominator is exactly $\dim(\widetilde{M})$. 
Therefore, by the same definition, 
\[
e_0(\widetilde{M})=h(1)=e_0(M), \quad e_1(\widetilde{M})=h'(1)=e_1(M). 
\]

For the second statement, let
\[
K_\bullet : 0 \lar F_r \lar F_{r-1} \lar \cdots \lar F_1 \lar F_0 \lar M \lar 0
\]
be a minimal graded free resolution of $M$ over $R$.
Minimality means that for each $i$ the differential
$d_i: F_i \lar F_{i-1}$ is represented by a matrix whose entries lie in the homogeneous maximal ideal $\fm=(x_1,\dots,x_m)$ of $R$.

Since $S$ is flat over $R$, tensoring $K_\bullet$ with $S$ yields an exact complex
\[
K_\bullet \otimes_R S : 0 \lar F_r \otimes_R S \lar \cdots
\lar F_0 \otimes_R S \lar M \otimes_R S
\lar 0.
\]
For each $i$, we have $F_i \otimes_R S \cong S^{\beta_i},$
with the same rank $\beta_i$ as $F_i$.
The differentials are given by the same matrices, now viewed over $S$; their entries lie in $\fm \subset \fn$
where $\fn_S=(x_1,\dots,x_n)$
is the homogeneous maximal ideal of $S$.
Hence, the complex remains minimal over $S$. Finally,
$ M \otimes_R S \cong \widetilde{M}$,
so $K_\bullet \otimes_R S \to \widetilde{M}$ is a minimal graded free resolution of $\widetilde{M}$ over $S$.
\end{proof}

Finally, we obtain a bound for $e_0(\Q)$ in terms of the degrees $d_1$ and $d_2$, and characterize when this bound is attained. We give an alternative proof for this fact at \Cref{subsec:semistability-bounds}, using $\mu$-semistability of torsion-free sheaves in $\mathbb{P}^{n-1}$.

\begin{Theorem}\label[Theorem]{Maximale0}
Let $\Theta$ be a $2\times 4$ matrix of rank $2$ in $R$ whose first and second rows consist of homogeneous polynomials of degrees $d_1$ and $d_2$, respectively. Then $0 \leq e_0(\Q)\leq d \doteq d_1 + d_2$. Moreover, if  $e_0(\Q) = d$, then there is a graded $R$-module isomorphism of degree zero $\syz(\Theta) \simeq R^2$.
\end{Theorem}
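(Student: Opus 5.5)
The plan is to identify $e_0(\Q)$ with the degree of the greatest common divisor of the $2$-minors of $\Theta$, and then to read off both claims from this description.

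\emph{Step 1: localize at height-one primes.} Since $\Theta$ has rank $2$, some minor $\Delta_{ij}$ is nonzero, and it is homogeneous of degree $d=d_1+d_2$. Let $h=\gcd(\Delta_{ij} : 1\le i<j\le 4)$. If $e_0(\Q)=0$ there is nothing to prove for the inequality, so assume $e_0(\Q)\neq 0$, i.e. $\dim\Q=n-1$.

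By the associativity formula, $e_0(\Q)=\sum_{\fp}\length_{R_\fp}(\Q_\fp)\deg(R/\fp)$, where $\fp$ runs over the height-one primes in $\supp\Q=V(I_2(\Theta))$. Each such $\fp=(p)$ is principal, with $p$ an irreducible form, and $R_\fp$ is a DVR. Over a DVR, the Smith normal form of $\Theta_\fp$ shows that $\length(\Q_\fp)$ equals the valuation of $\Fitt_0(\Q_\fp)=I_2(\Theta)_\fp$. This is $\min_{i<j}\ord_p(\Delta_{ij})=\ord_p(h)$. Hence
\[
e_0(\Q)=\sum_p \ord_p(h)\deg p=\deg h\le \deg\Delta_{ij}=d .
\]

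\emph{Step 2: the case $e_0(\Q)=d$.} Then $\deg h=d$, so every minor is a constant multiple of $h$: $\Delta_{ij}=c_{ij}h$ with $c_{ij}\in k$, not all zero. The columns of the skew-symmetric matrix \eqref{BRsyzygy} are syzygies of degree $d$, and each equals $h\cdot c_j$ for a constant vector $c_j$ formed from the $c_{ij}$.

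Since $\syz(\Theta)\subset R^4$ has torsion-free quotient $\N\subset R(d_1)\oplus R(d_2)$, the relation $\Theta(hc_j)=0$ gives $h\,\Theta(c_j)=0$, hence $\Theta(c_j)=0$. So each $c_j\in\syz(\Theta)_0$.

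The $c_{ij}$ satisfy the Plücker relations, because the $\Delta_{ij}$ do and $h\neq0$. Thus $(c_{ij})$ is the Plücker vector of a nonzero decomposable $2$-vector, and the constant skew matrix $C=(c_{ij})$ has rank exactly $2$. Its columns therefore span a $2$-dimensional space of constant syzygies.

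\emph{Step 3: conclude $\syz(\Theta)\simeq R^2$.} Change coordinates on $k^4$, as in Proposition~\ref{prop:compressibility}, so that two independent constant syzygies become $e_1,e_2$. Then $\Theta=\begin{pmatrix}0&0&\Theta''\end{pmatrix}$ with $\Theta''$ a $2\times2$ matrix. Since $\rk\Theta=2$, we have $\det\Theta''\neq0$. So $(a,b,c,d)\in\syz(\Theta)$ if and only if $\Theta''(c,d)^T=0$, which forces $c=d=0$. Hence $\syz(\Theta)=Re_1\oplus Re_2\simeq R^2$ with generators in degree $0$.

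The main point needing care is Step 1: justifying that the local length over the DVR equals the minimal valuation of the maximal minors. This is the standard fact that for a module over a DVR, the length is the valuation of $\Fitt_0$, which follows from the elementary divisor form.
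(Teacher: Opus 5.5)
Your proof is correct. Step~1 is the paper's argument: over the DVR $R_{(p)}$ the length equals the $p$-adic valuation of $\Fitt_0$, so $e_0(\Q)$ is the degree of the gcd of the $2$-minors. For the equality case, however, you take a genuinely different route. The paper shows that $\syz(\Theta)$ is locally free and then appeals to Quillen--Suslin. It gets split surjectivity where some minor is a unit, and uses the DVR property of $R_{\fp_i}$ at primes containing $H$. You instead extract two independent constant syzygies from the Pl\"ucker relation and reduce to $\Theta=\begin{bmatrix}0&0&\Theta''\end{bmatrix}$. Your route is more elementary and gives more:
\begin{enumerate}
\item it produces the degree-zero graded isomorphism that the statement actually claims, whereas the paper's conclusion is only ``up to grading shifts'';
\item it shows that $\Theta$ is, up to $\GL_4(k)$, a $2\times 2$ matrix padded with two zero columns;
\item it sidesteps a faulty step in the paper. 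At a prime $\fq\supseteq(H)$ of height at least two, the paper infers freeness of $\syz(\Theta)_\fq$ from freeness at a height-one prime $\fp_i\subseteq\fq$. But $R_{\fp_i}$ is a localization of $R_\fq$, not conversely, so nothing passes from $\fp_i$ up to $\fq$. Indeed, every finitely generated torsion-free module is free at height-one primes.
\end{enumerate}

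One thing to fix. The matrix \eqref{BRsyzygy}, as displayed with $\Delta_{ij}$ in position $(i,j)$, does not have syzygies as its columns. For $\Theta=\begin{bmatrix}1&0&0&0\\0&1&0&0\end{bmatrix}$ its second column is $e_1$, and $\Theta e_1\neq 0$. The degree-$d$ syzygies are the Cramer vectors $\Delta_{jk}e_i-\Delta_{ik}e_j+\Delta_{ij}e_k$ for $i<j<k$. Up to signs of columns, their coefficient matrix is the skew-symmetric matrix of complementary minors, i.e.\ $h$ times a constant skew matrix with entries $\pm c_{kl}$. That constant matrix has Pfaffian $\pm(c_{12}c_{34}-c_{13}c_{24}+c_{14}c_{23})=0$, so it has rank exactly two. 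Hence your torsion-freeness argument, the rank-two conclusion, and Step~3 go through verbatim with this corrected matrix.
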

\begin{proof}
	If $\dim \Q\le n-2$, then $e_0(\Q)=0$ and the first assertion is clear. Hence assume $\dim \Q=n-1$. Recall that $\mathrm{Fitt}_0(\Q)=I_2(\Theta)$, and each $2\times2$ minor of $\Theta$ has degree $d=d_1+d_2$.
		
		Let $\{ \fp_i \}$ be the height--one minimal primes of $I_2(\Theta)$. Since 		$R=k[x_1,\ldots,x_n]$ is a UFD, each $\fp_i$ is principal, say $\fp_i=(h_i)$, where
		$h_i$ is an irreducible homogeneous polynomial. Set
		\[
		m_i:=\length_{R_{\fp_i}}\!\big(\Q_{\fp_i}\big).
		\]
		These lengths are positive since $\fp_i\subset \supp(\Q)$.		
		Localize at $\fp_i$. Then $R_{\fp_i}$ is a DVR with uniformizer $h_i$. The module $\Q_{\fp_i}$ is finitely generated and supported only at the maximal ideal $\fp_iR_{\fp_i}$, hence $\dim \Q_{\fp_i}=0$ and therefore $\Q_{\fp_i}$ is a torsion
		$R_{\fp_i}$--module and hence . For a finitely generated torsion module $M$ over a DVR with uniformizer $h$, one has
		\[
		\mathrm{Fitt}_0(M)=\big(h^{\length(M)}\big).
		\]
		Applying this to $M=\Q_{\fp_i}$ yields
		\[
	\mathrm{Fitt}_0(\Q_{\fp_i})=\big(h_i^{m_i}\big).
		\]
		Since Fitting ideals commute with localization, we obtain
		\[
		I_2(\Theta)R_{\fp_i}
		=\mathrm{Fitt}_0(\Q)R_{\fp_i}
		=\mathrm{Fitt}_0(\Q_{\fp_i})
		=\big(h_i^{m_i}\big).
		\]
		Therefore every $2\times2$ minor $\Delta$ of $\Theta$ lies in $\big(h_i^{m_i}\big)$ in $R_{\fp_i}$, hence $h_i^{m_i}$ divides $\Delta$ in $R_{\fp_i}$. Consequently, for each $i$, 		$h_i^{m_i}$ divides $\Delta$, so $H:=\prod_i h_i^{m_i}$ divides $ \Delta$. Thus 
		\begin{equation}\label{degH}
       	\deg(H)=\sum_i m_i\,\deg(h_i)\ \le\ \deg(\Delta)=d.
		\end{equation}
				
		Since $\dim \Q=n-1$, the associativity formula yields
		\[
		e_0(\Q)=\sum_{\substack{\hht(\fp)=1, \ \fp\in \ass(\Q)}}
		\length_{R_\fp}(\Q_\fp)\,e_0(R/\fp).
		\]
		Here the relevant primes are exactly the height--one minimal primes $\fp_i=(h_i)$ of $I_2(\Theta)$, and for such a principal prime one has $e_0(R/(h_i))=\deg(h_i)$. Hence
		\begin{equation}\label{e0}
			e_0(\Q)=\sum_i m_i\,\deg(h_i).	
		\end{equation}
	
		Combining~\eqref{degH} and~\eqref{e0} gives $e_0(\Q)\le d$.
	\medskip
		Assume now that $e_0(Q)=d$. Then formulas~\eqref{degH} and~\eqref{e0}  force $\deg(H)=\deg(\Delta)=d$, and therefore each minor
		$\Delta$ satisfies $\Delta=c_\Delta H$ for some scalar $c_\Delta\in k$. Now fix any prime $\fq \subset R$. We show that $\syz(\Theta)_\fq$ is free of rank $2$.
		
		If $\fq \not\supseteq (H)$, then $H$ is a unit in $R_\fq$, hence some minor $\Delta$ is a unit in $R_\fq$. Thus 
		$
		\Theta_\fq : R_\fq^{4} \longrightarrow R_\fq^{2}
		$
		is surjective. Over a local ring, any surjection between free modules splits, so the exact sequence
		\[
		0 \lar \syz(\Theta)_\fq
		\lar R_\fq^{4}
		\lar R_\fq^{2}
		\lar 0
		\]
		splits and therefore $\syz(\Theta)_\fq \simeq R_\fq^{2}$.
		
		If $\fq \supseteq (H)$, then $\fq$ contains some height--one prime
		$\fp_i = (h_i)$. Localize at $\fp_i$. The ring $R_{\fp_i}$ is a DVR. Over a PID (hence over a DVR), every submodule of a free module is free. Since $\syz(\Theta)_{\fp_i} \subset R_{\fp_i}^{4}$,
		we conclude that $\syz(\Theta)_{\fp_i}$ is free. Its rank is
		$4-2=2$, hence $\syz(\Theta)_{\fp_i} \simeq R_{\fp_i}^{2}$.
		Localizing further from $R_{\fp_i}$ to $R_\fq$ preserves freeness, so $\syz(\Theta)_\fq \simeq R_\fq^{2}$. 		
		Thus $\syz(\Theta)$ is locally free of rank $2$, i.e.\ a finitely generated projective $R$-module of rank $2$. By the Quillen--Suslin theorem~\cite[Theorem 4.59]{Rotman}, every finitely generated projective module over a polynomial ring over a field is free. Therefore
	     $\syz(\Theta) \simeq R^{2}$ (up to grading shifts in the graded category). This completes the proof.
\end{proof}



\section{The Bourbaki degree}\label{sec:Bourbaki-degree}
In this section, we associate to a $2\times4$ matrix $\Theta$ of homogeneous polynomials a numerical invariant, called the \emph{Bourbaki degree}, arising
from the Bourbaki ideal of the first syzygy module of $\Theta$.

Let $\nu \in \syz(\Theta)$ be a homogeneous syzygy of degree $e$.  
It induces an injective map of graded modules
\[
R(-e)\hookrightarrow \syz(\Theta),
\]
yielding the commutative diagram
\begin{equation}\label{diag1}
	\begin{split}
		\xymatrix@R-2ex{
			& 0 \ar[d] & 0 \ar[d] & \\
			& R(-e)\ar[d]^{\nu}\ar@{=}[r] & R(-e)\ar[d]^-{\widetilde{\nu}} \\
			0 \ar[r] & \syz(\Theta)\ar[r] \ar[d]^u & R^{4} \ar[r]^-{\Theta} \ar[d] & N \ar@{=}[d] \ar[r] & 0 \\
			0 \ar[r] & M_{\nu} \ar[r] \ar[d] & R^{4}/R(-e) \ar[r] \ar[d] & N \ar[r] & 0 \\
			& 0 & 0 &
		}
	\end{split}
\end{equation}
The module $M_{\nu}$ is torsion-free of rank one, hence isomorphic to an ideal of $R$; this ideal is called the \emph{Bourbaki ideal} of $\syz(\Theta)$. Note that if $M_\nu$ is a proper ideal, then  $M_{\nu}$ has codimension two and there exists an ideal $I_{\nu}\subseteq R$ such that $M_\nu\simeq I_\nu(s)$ for some integer $s$. The projective scheme $Y:=\proj{R/I_\nu}$ has dimension $n-3$ and the Hilbert polynomial of $R/I_\nu$ is of the form 
\begin{equation}\label{HPBour}
	\mathrm{HP}_{R/I_\nu}(t)=\dfrac{\deg(R/I_\nu)}{(n-3)!}t^{n-3}+ \dfrac{\deg(R/I_\nu)(n-2) - 2e_1(R/I_\nu)}{2(n-4)!}t^{n-4}+\ldots+e_{n-3}(R/I_\nu).
\end{equation}
Also, since the dimension of the cokernel of $\Theta$ is at most $n-1$, it follows that 
\begin{equation}\label{HPQ}
\mathrm{HP}_{\Q}(t)=\dfrac{e_0(\Q)}{(n-2)!}t^{n-2} +\dfrac{e_0(\Q)(n-1) - 2e_1(\Q)}{2(n-3)!}t^{n-3}+\cdots+e_{n-2}(\Q),
\end{equation}
with $e_0(\Q)\geq 0$ and $e_0(\Q)>0$ if and only if $\dim \Q=n-1$. Note that if $\dim \Q=r<n-1$,  then $e_0(\Q)=e_1(\Q)=\cdots=e_{n-r-2}(\Q)=0$ and $e_{n-r-1}(\Q)>0$. 
  
\begin{Theorem}\label[Theorem]{Bourideal}
Let $R=k[x_1,\ldots,x_n]$ be a polynomial ring over an infinite field $k$ and $\Theta$ be a $2\times 4$ matrix of rank $2$ in $R$ whose first and second rows consist of homogeneous polynomials of degrees $d_1$ and $d_2$, respectively. Let $\nu \in \syz(\Theta)$ be a minimal homogeneous generator of degree $e\geq 1$. Then 
\begin{itemize}
	\item[{\rm (a)}]
$M_{\nu}$ is free if and only if $\Theta$ is free. In this case, $\dim(\Q)=n-2$, $\Q$ is maximal Cohen-Macaulay and  $\Q$ admits the graded minimal free resolution
\[0\rar R(-(e+d_2))\oplus R(-(d_1+2d_2-e))\rar R^4(-d_2)\rar R\oplus R(-(d_2-d_1))\rar \Q \rar 0. \] 
\item[{\rm (b)}] If $\Theta$ is not free, then $M_\nu$ is isomorphic to a proper homogeneous ideal $I_\nu\subseteq R$  of codimension two, such that the induced isomorphism $M_\nu\simeq I_\nu(s)$ is homogeneous of degree zero, where $s=e-d+e_0(\Q)$. Furthermore, 
\begin{equation}\label{BourDeg}
	\deg(R/I_\nu)=(e-d)(e+e_0(\Q))+\fq_{\Theta}+\ell_\Q+e_1(\Q),
\end{equation}
where $d=d_1+d_2$ , $\fq_{\Theta}=d_1^2+d_2^2+d_1d_2$,  
$ \ell_\Q= \tfrac{1}{2}\bigl(e_0(\Q)^2+e_0(\Q)\bigr)$. If $e_0(\Q) = 0$, then
$$
\deg(R/I_\nu)=e(e-d)+\fq_{\Theta}-e_1(\Q).
$$
In this case, we remark that our convention changes the sign of $e_1(\Q)$ to be a positive leading coefficient for the Hilbert polynomial of $\Q$.
\end{itemize}
\end{Theorem}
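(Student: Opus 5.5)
The plan is to handle (a) by a splitting argument and (b) by comparing Hilbert series along the two short exact sequences that come out of diagram \eqref{diag1}. These are
\[
0\to R(-e)\xrightarrow{\nu}\syz(\Theta)\to M_\nu\to 0,
\qquad
0\to \syz(\Theta)\to R^4\xrightarrow{\Theta} R(d_1)\oplus R(d_2)\to \Q\to 0 .
\]

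\textbf{Part (a).} I will argue the equivalence in both directions.
\begin{itemize}
\item If $M_\nu$ is free, then $M_\nu\simeq R(-e')$. The first sequence splits, so $\syz(\Theta)\simeq R(-e)\oplus R(-e')$ is free.
\item Conversely, suppose $\syz(\Theta)$ is free. By reflexivity and rank it equals $R(-a)\oplus R(-b)$ with $a\le b$, and $e=\indeg(\syz(\Theta))=a$. A minimal homogeneous generator $\nu$ of degree $e$ can then be completed to a homogeneous basis: if $a<b$, $\nu$ is a unit multiple of the first basis vector, and if $a=b$ it is a nonzero element of $k^2$. Hence $M_\nu\simeq R(-b)$ is free.
\end{itemize}
In the free case $\Q$ has projective dimension $2$. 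The resolution then reads
\[
0\to R(-e)\oplus R(-b)\to R^4\to R(d_1)\oplus R(d_2),
\]
and I will twist by $-d_2$ to match the displayed form. To pin down $b$, I will write the numerator of the Hilbert series of $\Q$ over $(1-t)^n$:
\[
P(t)=t^{-d_1}+t^{-d_2}-4+t^{e}+t^{b}.
\]
Here $P(1)=0$, and $P'(1)=e+b-d$ is, up to sign, $e_0(\Q)$. So the remaining point is to show $e_0(\Q)=0$ when $\Theta$ is free. I will argue this via the Buchsbaum--Eisenbud structure theorem for the length-$2$ resolution. It gives $I_2(\Theta)=a\cdot J$ with $J$ of grade $\geq 2$, and $a$ of degree $e_0(\Q)$. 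A common factor $a$ of all $2$-minors together with a minimal syzygy of degree $e\ge 1$ should be excluded. Failing that, I would use \Cref{Maximale0} together with the assumption $e\ge1$. This gives $b=d-e$ in the untwisted grading, so $\dim\Q=n-2$ and $\Q$ is maximal Cohen--Macaulay on its support, with the stated shifts. I expect this codimension-one exclusion to be the delicate point of (a).

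\textbf{Part (b), the ideal.} If $\Theta$ is not free, then by (a) $M_\nu$ is a torsion-free rank-one module that is not free. I will embed it in its double dual $R(-s')$ and take $I_\nu$ as the image. $M_\nu$ is a quotient of a reflexive module by a free one, so it has depth $\ge n-1$ at primes. Moreover $M_\nu$ is $S_1$, and in fact reflexive in codimension $1$ because $\syz(\Theta)$ is. Together these give $\hht I_\nu\ge 2$. Properness follows because $M_\nu$ is not free, and the codimension equals $2$ because a rank-one torsion-free module over a UFD is an ideal of height $\ge 2$, and $I_\nu$ is not unit (height $\le 2$ follows from the Hilbert computation below). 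The twist $s$ will fall out of the next step.

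\textbf{Part (b), the degree.} Write Hilbert series with numerators over $(1-t)^n$ and set $h_\Q(t)$ for the numerator of $\Q$ over $(1-t)^{n-1}$. The two sequences together with $M_\nu\simeq I_\nu(s)$ give
\[
N_{R/I_\nu}(t)=1-t^{s}\Bigl(4-t^{-d_1}-t^{-d_2}+(1-t)h_\Q(t)-t^{e}\Bigr),
\]
with the sign and shift conventions adjusted to those of \eqref{HPQ}. Since $R/I_\nu$ has codimension two, $N(1)=N'(1)=0$ and $\deg(R/I_\nu)=\tfrac12N''(1)$.
\begin{itemize}
\item The condition $N(1)=0$ is automatic.
\item The condition $N'(1)=0$ is linear in $s$. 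Using $h_\Q(1)=e_0(\Q)$, it should force $s=e-d+e_0(\Q)$.
\item Computing $\tfrac12N''(1)$ involves the second derivatives of $t^{s-d_i}$ and $t^{s+e}$, plus the term $-2h_\Q'(1)+\dots$, where $h_\Q'(1)=e_1(\Q)$.
\end{itemize}
Expanding yields the quadratic $(e-d)(e+e_0)+d_1^2+d_2^2+d_1d_2+\tfrac12(e_0^2+e_0)+e_1$.

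When $e_0(\Q)=0$, I will redo the computation with $\Q$ of dimension $n-2$. Then $(1-t)h_\Q$ is replaced by $(1-t)^2h_\Q$, whose second-derivative contribution is $2h_\Q(1)$, and under the stated convention this produces the sign change on $e_1(\Q)$.

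The main computational obstacle is bookkeeping the grading signs, namely $R(d_i)$ versus $t^{-d_i}$ and the shift $s$, consistently with \eqref{HPQ}. I will check the result against the Buchsbaum--Rim case, where $e_0=e_1=0$ and $e=d$ should return $\fq_\Theta$.
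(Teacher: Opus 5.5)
You have correctly located the weak point of (a), but the step cannot be completed. For free $\Theta$ with $e\ge 1$ one does \emph{not} have $e_0(\mathcal{Q})=0$ in general. The paper's own example in Section~5 (the matrix that fails to induce a distribution) is
\[
\Theta=\begin{bmatrix} x_3 & x_4 & 0 & 0\\ x_1x_3 & x_1x_4 & -x_1x_2 & -x_2^2\end{bmatrix},
\]
with $d_1=1$, $d_2=2$ and rows of gcd $1$. Its syzygies are freely generated by $(x_4,-x_3,0,0)^T$ and $(0,0,x_2,-x_1)^T$, so $\operatorname{Syz}(\Theta)\simeq R(-1)^2$ and $e=1$. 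However, $I_2(\Theta)=x_2\,(x_1,x_2)(x_3,x_4)$, so $e_0(\mathcal{Q})=1$ and $\dim\mathcal{Q}=n-1$. The displayed resolution, which would force $\operatorname{Syz}(\Theta)\simeq R(-1)\oplus R(-2)$, therefore fails.

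In the Buchsbaum--Eisenbud factorization $I_2(\Theta)=a\cdot J$ nothing forces $a$ to be a unit; here $a=x_2$. Theorem~\ref{Maximale0} only excludes $e_0(\mathcal{Q})=d$. What your computation $P'(1)=e+b-d=-e_0(\mathcal{Q})$ actually proves is $\operatorname{Syz}(\Theta)\simeq R(-e)\oplus R(e-d+e_0(\mathcal{Q}))$, which matches the $s$ of (b). The claims $\dim\mathcal{Q}=n-2$, Cohen--Macaulayness, and the stated shifts hold precisely under the extra hypothesis $e_0(\mathcal{Q})=0$, i.e.\ $\operatorname{ht}I_2(\Theta)\ge 2$, and under that hypothesis your argument goes through. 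The paper's proof has the same hole: it asserts $\operatorname{ht}I_2(\Theta)=2$ directly from the length-two resolution of $\mathcal{Q}$, which does not follow. Your proof of the equivalence ``$M_\nu$ free $\iff$ $\Theta$ free'' is fine.

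For (b) you follow the paper's route: additivity along the Bourbaki sequence and the presentation of $\mathcal{Q}$. You phrase it through Hilbert-series numerators and derivatives at $t=1$ rather than the $t^{n-2}$ and $t^{n-3}$ coefficients of Hilbert polynomials. The bookkeeping is right. $N'(1)=0$ gives $s=e-d+e_0(\mathcal{Q})$. Your $\tfrac12N''(1)=\tfrac12\bigl(e^2+s^2+d_1^2+d_2^2+e_0(\mathcal{Q})\bigr)+h_{\mathcal{Q}}'(1)$ is exactly the paper's intermediate identity. The sign flip when $e_0(\mathcal{Q})=0$ comes out as you describe.

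One correction: $\operatorname{ht}I_\nu\le 2$ does not follow from the Hilbert computation. Argue instead as follows. If $\operatorname{ht}I_\nu\ge 3$, then $\operatorname{Ext}^1_R(I_\nu,R)\cong\operatorname{Ext}^2_R(R/I_\nu,R)=0$, so the Bourbaki sequence splits. Then $I_\nu$ is a rank-one reflexive summand of $\operatorname{Syz}(\Theta)$, hence principal, hence equal to $R$. This contradicts the non-freeness of $M_\nu$.
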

\begin{proof}
(a)  The first statement follows from the definition and the left vertical exact sequence in~\eqref{diag1}. Assume that $M_\nu$ is free. Then $\syz(\Theta)\simeq R(-e)\oplus R(s)$ for some integer $s$. Then the resolution $\Q$ is of the form 
\[0\rar  R^2\stackrel{\phi}{\rar} R^4 \stackrel{\Theta}{\rar} R^2\rar \Q\rar 0. \]
Thus $\hht(I_2(\Theta))= 2$. 
By Auslander--Buschbaum formula, the projective dimension of $\Q$ is $\projdim_R(\Q)=2$. Thus $\dim (\Q)=n-2$. The same argument as in part(b) will show that $s=e-(d_1+d_2)$. Then $\Q$ has a graded minimal free resolution of the form 
\[0\rar R(-(e+d_2))\oplus R(-(d_2-s))\rar R^4(-d_2)\rar R\oplus R(-(d_2-d_1))\rar \Q \rar 0. \] 

(b) Applying additivity of Hilbert polynomials to the last exact sequence in diagram~\eqref{diag1}, and using~\eqref{HPQ} and~\eqref{HPBour}, we obtain
{\small
\[
\begin{aligned}
	0
	&=
	\mathrm{HP}_{R^{4}}(t)
	-\mathrm{HP}_{R(-e)}(t)
	-\mathrm{HP}_{I_{\nu}(s)}(t)
	-\mathrm{HP}_{N}(t)
	\\[6pt]
	&=
	4\binom{t+n-1}{n-1}
	-\binom{t-e+n-1}{n-1}
	-\binom{t+s+n-1}{n-1}
	-\binom{t+d_1+n-1}{n-1}
	-\binom{t+d_2+n-1}{n-1}
	\\
	&\quad
	+\mathrm{HP}_{R/I_{\nu}}(t)
	+\mathrm{HP}_\Q(t)
	\\[6pt]
	&=
	4\binom{t+n-1}{n-1}
	-\binom{t-e+n-1}{n-1}
	-\binom{t+s+n-1}{n-1}
	-\binom{t+d_1+n-1}{n-1}
	-\binom{t+d_2+n-1}{n-1}
	\\
	&\quad
	+\dfrac{e_0(\Q)}{(n-2)!}t^{n-2}+\left(\dfrac{\deg(R/I_v)}{(n-3)!}+\dfrac {e_0(\Q)(n-1) - 2e_1(\Q)}{2(n-3)!}\right)t^{n-3}+\ \mbox{(lower order terms in $t$)}
		\\[6pt]
	&=\left(\frac{\,e - s - d_1 - d_2 + e_0(\Q)\,}{(n-2)!}\right)t^{n-2}+Bt^{n-3}+\ \mbox{(lower order terms in $t$)},
\end{aligned}
\]}
where 
\[
B=
\frac{1}{2 (n-3)!}
\Bigl[
n(e - s - d_1-d_2)
- \bigl(e^2 + s^2 + d_1^2 + d_2^2\bigr)
+ 2\deg(R/I_v)
+ (n-1)e_0(\Q)
- 2e_1(\Q)
\Bigr].
\]

Set $d=d_1+d_2$. The vanishing of the coefficient of $t^{n-2}$ is equivalent to $e-s-d+e_0(\Q)=0$, hence $s=e-d+e_0(\Q)$. Substituting this value of $s$, we compute
$e-s-d=-e_0(\Q)$
and
\[
s^2=(e-d+e_0(\Q))^2
=e^2+d^2+e_0(\Q)^2-2ed+2e\,e_0(\Q)-2d\,e_0(\Q).
\]
Therefore,
\[
e^2+s^2+d_1^2+d_2^2
=
2e^2+d^2+d_1^2+d_2^2
-2ed
+2e\,e_0(\Q)
-2d\,e_0(\Q)
+e_0(\Q)^2.
\]

Since the Hilbert polynomial vanishes identically, the coefficient \(B\) must be zero, and we obtain
\[
\begin{aligned}
0
&=
n(e-s-d)
-\bigl(e^2+s^2+d_1^2+d_2^2\bigr)
+2\deg(R/I_v)
+(n-1)e_0(\Q)
-2e_1(\Q) \\
&=
-e_0(\Q)
-\bigl(e^2+s^2+d_1^2+d_2^2\bigr)
+2\deg(R/I_v)
-2e_1(\Q).
\end{aligned}
\]
Thus,
\[
2\deg(R/I_v)
=
e_0(\Q)
+\bigl(e^2+s^2+d_1^2+d_2^2\bigr)
+2e_1(\Q).
\]
Substituting the previous expansion and using \(d=d_1+d_2\), we obtain
\[
\deg(R/I_v)
=
e^2-ed
+\frac{d^2+d_1^2+d_2^2}{2}
+(e-d)e_0(\Q)
+\frac{e_0(\Q)^2+e_0(\Q)}{2}
+e_1(\Q).
\]
Finally, since $\frac12(d^2+d_1^2+d_2^2)=d_1^2+d_2^2+d_1d_2$, 
we conclude that
\[
\deg(R/I_v)
=
(e-d)(e+e_0(\Q))
+d_1^2+d_2^2+d_1d_2
+\frac{e_0(\Q)^2+e_0(\Q)}{2}
+e_1(\Q).
\]
\end{proof}

\begin{Definition}
Let $R=k[x_1,\ldots,x_n]$ be a polynomial ring over an infinite field $k$ and $\Theta$ be a $2\times 4$ matrix of rank $2$ in $R$ whose first and second rows consist of homogeneous polynomials of degrees $d_1$ and $d_2$, respectively. The\emph{ Bourbaki degree} of $\Theta$ is the degree of the Bourbaki ring $R/I_\nu$ of $\syz(\Theta)$ for a syzygy $\nu$ of degree $e=\mathrm{indeg}(\syz(\Theta))$. 
\end{Definition}
The Bourbaki degree does not depend on the choice of a minimal generator with initial degree. Because of this independence, we denote it by $\Bour(\Theta)$. Theorem~\ref{Bourideal} gives
\begin{equation}\label{MainBourDegFormual}
	\Bour(\Theta)=(e-d)(e+e_0(\Q))+\fq_{\Theta}+\ell_\Q+e_1(\Q).
\end{equation} 

The formula above provides a generalization of the formula for the Bourbaki degree considered previously for $2 \times 4$ Jacobian matrices for $n = 4$ in \cite[Proposition 3]{monteiro2025}, which is obtained from ours by assuming $e_0(\Q) = 0$, a hypothesis called \emph{normality} in this previous work. In \Cref{sec:Bour-of-ideals}, we explore how this notion also generalizes the one introduced for projective plane curves in \cite{MAA}.

The following result determines the Bourbaki degree provided that $\dim\Q\leq n-3$. 
\begin{Proposition}\label[Proposition]{BourM-q}
Keeping the notation and assumptions of Theorem~\ref{Bourideal}, assume that $\dim \Q\leq n-3$. Then 
$
\Bour(\Theta)=\fq_{\Theta}.
$
\end{Proposition}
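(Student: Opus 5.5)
The plan is to combine formula \eqref{MainBourDegFormual} from Theorem~\ref{Bourideal} with the identification of the initial degree forced by the Buchsbaum--Rim resolution. First, since $\dim\Q\le n-3$, the discussion in the Hilbert coefficients subsection (case (c)) gives $e_0(\Q)=e_1(\Q)=0$, hence $\ell_\Q=0$. By Proposition~\ref{dimQ-n-3-BR-resolution}, $\Theta$ is not free, so part (b) of Theorem~\ref{Bourideal} applies and yields
\[
\Bour(\Theta)=e(e-d)+\fq_\Theta .
\]
It therefore suffices to show that $e=\indeg(\syz(\Theta))=d=d_1+d_2$, so that the term $e(e-d)$ vanishes.

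To determine $e$, I would use that $\grade(I_2(\Theta))\ge 3$ when $\dim\Q\le n-3$. This is the observation made in the proof of Proposition~\ref{dimQ-n-3-BR-resolution}. In this situation the Buchsbaum--Rim complex \eqref{BRCmain} is a minimal graded free resolution of $\Q$. Consequently $\syz(\Theta)=\ker\Theta=\Im\psi$ is minimally generated by the four columns of the skew-symmetric matrix $\psi$ in \eqref{BRsyzygy}. Each column has entries the minors $\Delta_{ij}$ of degree $d$, so every minimal generator sits in degree $d$ relative to $R^4$. Hence $\syz(\Theta)_i=0$ for $i<d$, and $e=d$. This is consistent with the upper bound \eqref{UPindeg}.

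The main point needing care is the minimality of \eqref{BRCmain}, i.e.\ that no minimal generator of $\syz(\Theta)$ lives in degree below $d$. One could worry that the four columns of $\psi$ generate $\syz(\Theta)$ but are not minimal. That would only matter if some column were a combination of the others with unit coefficients. Even then all columns have degree exactly $d$, so the initial degree is still $d$, provided $\psi\neq 0$. If $\psi$ were zero, all minors would vanish, contradicting $\rk\Theta=2$. So the only input needed is exactness of the Buchsbaum--Rim complex under $\grade I_2(\Theta)\ge3$, which gives $\ker\Theta=\Im\psi$ (\cite[A2.6]{EisenbudBook}).

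Substituting $e=d$ and $e_0(\Q)=e_1(\Q)=0$ into \eqref{MainBourDegFormual} then gives $\Bour(\Theta)=\fq_\Theta$.
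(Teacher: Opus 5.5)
Your proposal is correct and follows the paper's own argument: $\dim\Q\le n-3$ forces $\grade I_2(\Theta)\ge 3$, the exact Buchsbaum--Rim complex then shows that $\syz(\Theta)$ is generated in degree $d$, so $e=d$, and with $e_0(\Q)=e_1(\Q)=0$ the formula \eqref{MainBourDegFormual} reduces to $\fq_\Theta$. Your remark that only exactness of \eqref{BRCmain}, not its minimality, is needed is a correct small sharpening. Note also that the syzygies coming from $\wedge^3R^4\to R^4$ actually have the complementary minors $\pm\Delta_{kl}$ as entries, not the arrangement displayed in \eqref{BRsyzygy}; this does not affect the degree count.
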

\begin{proof}
Since $\dim(\Q)=\dim R/I_2(\Theta)$, the ideal $I_2(\Theta)$ has expected height $3$. Hence, we get that $\mathrm{grade}(I_2(\Theta))=3$, and $\mathcal{Q}$ admits a Buchsbaum--Rim graded free resolution of the form~\eqref{BRCmain}. It follows that the initial degree  $e=\indeg(\syz(\Theta))=d_1+d_2$ and $e_0(\Q)=e_1(\Q)=0$. Therefore, the assertion follows from the Bourbaki degree formula~\eqref{MainBourDegFormual}.
\end{proof}
The following theorem describes how the graded free resolutions of $\syz(\Theta)$ and of the Bourbaki ideal $I_\nu$ determine each other. There are analogous results for projective plane curves (see \cite{MAA}) and for $n=4$ and Jacobian matrices (see \cite{monteiro2025}).

\begin{Theorem}\label[Theorem]{Thm:Bourbaki-resolutions}
Keep notations and assumptions of Theorem~\ref{Bourideal}, and assume moreover that the matrix $\Theta$ is not free. Then, the following holds:
\begin{itemize}
\item[{\rm (a)}] Any graded free resolution of $I_\nu$,
\[
F_\bullet \longrightarrow F_0 \xrightarrow{\omega} I_\nu \longrightarrow 0,
\]
lifts to a graded free resolution of $\syz(\Theta)$ of the form
\[
F_\bullet(s)
\longrightarrow
F_0(s) \oplus R(-e)
\xrightarrow{(\omega(s),\,\nu)}
\syz(\Theta)
\longrightarrow 0.
\]
\item[{\rm (b)}]
Choose a complete set of minimal homogeneous
generators of $\syz(\Theta)$ containing $\nu$. Let
\[
F_\bullet
\longrightarrow
F_0 \oplus R(-e)
\xrightarrow{(\lambda,\nu)}
\syz(\Theta)
\longrightarrow 0
\]
be the resulting minimal graded free resolution. Then a graded minimal free
resolution of $I_\nu$ is given by
\[
F_\bullet(-s)
\longrightarrow
F_0(-s)
\xrightarrow{\lambda(-s)}
I_\nu
\longrightarrow 0.
\]
\end{itemize}
\end{Theorem}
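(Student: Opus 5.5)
The plan is to work entirely with the left vertical column of diagram~\eqref{diag1}, namely the short exact sequence
\[
0\longrightarrow R(-e)\xrightarrow{\ \nu\ }\syz(\Theta)\xrightarrow{\ u\ } M_\nu\longrightarrow 0,
\]
together with the degree-zero isomorphism $M_\nu\simeq I_\nu(s)$, $s=e-d+e_0(\Q)$, from Theorem~\ref{Bourideal}(b). Both parts are then instances of the horseshoe lemma, specialised to a sequence whose left term is free.

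\textbf{Part (a).} Start from a graded free resolution $F_\bullet\to F_0\xrightarrow{\omega}I_\nu\to 0$. Twisting by $s$ gives a resolution $F_\bullet(s)\to F_0(s)\to M_\nu\to 0$.

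Since $F_0(s)$ is free and $u$ is surjective, $\omega(s)$ lifts to a graded map $\tilde\omega\colon F_0(s)\to\syz(\Theta)$ with $u\circ\tilde\omega=\omega(s)$. The map $(\tilde\omega,\nu)\colon F_0(s)\oplus R(-e)\to\syz(\Theta)$ is surjective: its image contains $\nu R(-e)=\ker u$ and maps onto $M_\nu$.

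The horseshoe lemma, applied with the trivial resolution $0\to R(-e)\to R(-e)$ of the left term, produces a resolution of $\syz(\Theta)$. Its $i$-th term is $F_i(s)\oplus 0$ for $i\ge1$ and $F_0(s)\oplus R(-e)$ in degree $0$.

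To identify the higher differentials, compute the kernel of $(\tilde\omega,\nu)$. A pair $(a,b)$ with $\tilde\omega(a)+\nu b=0$ satisfies $\omega(s)(a)=0$, so $a\in\ker\omega(s)=\operatorname{Im}(F_1(s))$. Conversely, for $a$ in this kernel, $\tilde\omega(a)\in\ker u=\nu R$, so $b$ is uniquely determined by $a$ because $\nu$ is injective. Hence the projection $(a,b)\mapsto a$ is an isomorphism from $\ker(\tilde\omega,\nu)$ onto $\ker\omega(s)$. The differential $F_1(s)\to F_0(s)\oplus R(-e)$ is therefore $(\partial_1,\beta)$, where $\beta$ is the correction term forced by $\nu$, and the maps from $F_2(s)$ onward are unchanged.

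This gives the claimed resolution. The paper's notation $(\omega(s),\nu)$ should be read with $\omega(s)$ meaning the lift $\tilde\omega$.

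\textbf{Part (b).} Reverse the construction. Let $F_\bullet\to F_0\oplus R(-e)\xrightarrow{(\lambda,\nu)}\syz(\Theta)\to0$ be minimal, with $\nu$ part of a minimal generating set. Composing with $u$ kills $\nu$, so $u\circ\lambda\colon F_0\to M_\nu$ is surjective.

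Next compute the kernel of $u\circ\lambda$. If $u\lambda(a)=0$, then $\lambda(a)=-\nu b$ for a unique $b$, so $(a,b)\in\ker(\lambda,\nu)$. The projection $\ker(\lambda,\nu)\to\ker(u\lambda)$ is then an isomorphism, since $(0,b)$ lies in the kernel only when $b=0$, again because $\nu$ is injective. Hence $F_\bullet$, with its first differential composed with the projection onto $F_0$, resolves $M_\nu$. Twisting by $-s$ yields the stated resolution of $I_\nu$.

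\textbf{Minimality in (b).} This is the step I expect to be the main obstacle.
\begin{itemize}
\item The differentials in homological degree $\ge2$ are unchanged, so they stay minimal.
\item The first differential is the composite of a minimal map with a projection, so its entries still lie in $\fm$.
\item The remaining issue is that $u\lambda$ must map a basis of $F_0$ to a minimal generating set of $M_\nu$. Suppose some generator $\lambda(\epsilon_j)$ became redundant modulo $\fm M_\nu$. Lifting that relation back gives $\lambda(\epsilon_j)\in\fm\syz(\Theta)+R\nu$. Because $\deg\nu=e$ is the initial degree and $\nu$ is minimal, the $R\nu$-coefficient has positive degree, unless $\deg\epsilon_j=e$. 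In that case the coefficient is a scalar, which contradicts the linear independence of minimal generators modulo $\fm\syz(\Theta)$.
\end{itemize}
Thus $\operatorname{Tor}_0$ is preserved, the resolution is minimal, and the proof is complete.
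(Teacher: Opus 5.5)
Your proof is correct and follows essentially the paper's route. You lift $\omega(s)$ through $u$ using freeness of $F_0(s)$ and identify $\ker(\tilde\omega,\nu)\cong\ker\omega(s)$, which the paper does with the Snake Lemma and you do by direct computation; you then run the same argument backwards for (b), and you are right that $\omega(s)$ in the statement must be read as the lift $\tilde\omega$. Your explicit minimality check in (b) fills in a point the paper leaves implicit, though your third bullet already follows from the second: the first differential having entries in $\fm$ is equivalent to $\ker(u\lambda)\subseteq\fm F_0$, which says exactly that $u\lambda$ sends a basis of $F_0$ to a minimal generating set of $M_\nu$.
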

\begin{proof}
(a) Applying the functor
$\Hom_R(F_0(s),-)$ to the short exact sequence
\[
0 \longrightarrow R(-e)
\xrightarrow{\ \nu\ }
\syz(\Theta)
\xrightarrow{\ \pi\ }
\mathcal{I}_\nu(s)
\longrightarrow 0,
\]
we obtain the exact segment
\[
\Hom_R(F_0(s),\syz(\Theta))
\xrightarrow{\ \pi^*\ }
\Hom_R(F_0(s),\mathcal{I}_\nu(s))
\longrightarrow
\Ext_R^1(F_0(s),R(-e))
=0,
\]
as $F_0(s)$ is a free $R$-module. Thus, the map $\pi^*$ is surjective. Therefore, there exists a morphism $ \widetilde{\omega}\colon F_0(s)\longrightarrow \syz(\Theta)$
such that
$
\pi\circ\widetilde{\omega}=\omega(s).
$
We now consider the map $\widetilde{\omega}\oplus\nu$ and the following
commutative diagram, whose two central columns are short exact sequences:
\[
\begin{tikzcd}
	& R(-e) & R(-e) \\
	\ker(\widetilde{\omega}\oplus\nu)
	& F_0(s)\oplus R(-e)
	& \syz(\Theta)
	& \cok(\widetilde{\omega}\oplus\nu) \\
	\ker(\omega(s))
	& F_0(s)
	& I_\nu(s)
	& 0
	\arrow[Rightarrow, no head, from=1-2, to=1-3]
	\arrow[hook, from=1-2, to=2-2]
	\arrow["\nu", hook, from=1-3, to=2-3]
	\arrow[hook, from=2-1, to=2-2]
	\arrow["\widetilde{\omega}\oplus\nu", from=2-2, to=2-3]
	\arrow[two heads, from=2-2, to=3-2]
	\arrow[two heads, from=2-3, to=2-4]
	\arrow["\pi", two heads, from=2-3, to=3-3]
	\arrow[hook, from=3-1, to=3-2]
	\arrow["\omega(s)", from=3-2, to=3-3]
	\arrow[from=3-3, to=3-4]
\end{tikzcd}
\]

By the Snake Lemma, we obtain $\cok(\widetilde{\omega}\oplus\nu)=0$ and
\[
\ker(\widetilde{\omega}\oplus\nu)\;\simeq\;\ker(\omega(s)).
\]
Consequently, extending the free resolution of $I_\nu$ and twisting by $R(s)$,
we obtain a graded free resolution of $\syz(\Theta)$:
\[
\begin{tikzcd}
	F_\bullet(s)
	& F_0(s)\oplus R(-e)
	& \syz(\Theta)
	& 0 \\
	& \ker(\omega(s))
	\arrow[from=1-1, to=1-2]
	\arrow[two heads, from=1-1, to=2-2]
	\arrow["\widetilde{\omega}\oplus\nu", from=1-2, to=1-3]
	\arrow[from=1-3, to=1-4]
	\arrow[hook, from=2-2, to=1-2]
\end{tikzcd}
\]
as claimed.

\medskip

(b) We split the graded free resolution as indicated:
\[
\begin{tikzcd}
	{F_\bullet} & {F_0\oplus R(-e)} & {\syz(\Theta)} & 0 \\
	& S
	\arrow[from=1-1, to=1-2]
	\arrow[two heads, from=1-1, to=2-2]
	\arrow["{(\lambda,\nu)}", from=1-2, to=1-3]
	\arrow[from=1-3, to=1-4]
	\arrow[hook, from=2-2, to=1-2]
\end{tikzcd}
\]
and consider the following diagram with exact rows, which induces a short exact sequence on cokernels in the last row:
\[
\begin{tikzcd}
	& R(-e) & R(-e) \\
	S & F_0\oplus R(-e) & \syz(\Theta) \\
	S & F_0 & \mathcal{I}_\nu(s)
	\arrow[Rightarrow, no head, from=1-2, to=1-3]
	\arrow["\nu", hook, from=1-2, to=2-2]
	\arrow["\nu", hook, from=1-3, to=2-3]
	\arrow[hook, from=2-1, to=2-2]
	\arrow[Rightarrow, no head, from=2-1, to=3-1]
	\arrow[two heads, from=2-2, to=2-3]
	\arrow[two heads, from=2-2, to=3-2]
	\arrow["\pi", two heads, from=2-3, to=3-3]
	\arrow[hook, from=3-1, to=3-2]
	\arrow[two heads, from=3-2, to=3-3]
\end{tikzcd}
\]
Completing this to a graded free resolution and twisting by $R(-s)$, we obtain the resolution in the claim:
\[
\begin{tikzcd}
	{F_\bullet(-s)} & {F_0(-s)} & {I_\nu} & 0 \\
	& {S(-s)}
	\arrow[from=1-1, to=1-2]
	\arrow[two heads, from=1-1, to=2-2]
	\arrow["{\lambda(-s)}", from=1-2, to=1-3]
	\arrow[from=1-3, to=1-4]
	\arrow[hook, from=2-2, to=1-2]
\end{tikzcd}
\]
\end{proof}

The next result shows that local freeness of $\Theta$ (see Definition~\ref{free-locallyfree}) forces the associated Bourbaki scheme to be locally Cohen--Macaulay away from the irrelevant maximal ideal.
\begin{Proposition}
Keeping the notation and assumptions of Theorem~\ref{Bourideal}, if $\Theta$ is locally free, then $R/I_\nu$ is locally Cohen-Macaulay on the punctured spectrum. 
\end{Proposition}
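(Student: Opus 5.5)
We work with the Bourbaki sequence from diagram~\eqref{diag1},
\[
0\lar R(-e)\xrightarrow{\ \nu\ }\syz(\Theta)\lar I_\nu(s)\lar 0,
\]
and localize it at a prime $\fp\neq\fm$ containing $I_\nu$. Since $\Theta$ is locally free, $\syz(\Theta)_\fp$ is a free $R_\fp$-module of rank $2$. The goal is to show that $(R/I_\nu)_\fp$ is Cohen--Macaulay. If $I_\nu\not\subseteq\fp$, there is nothing to prove. So assume $\fp\supseteq I_\nu$, which forces $\hht(\fp)\geq 2$, because $I_\nu$ has codimension two by Theorem~\ref{Bourideal}(b).

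The first step is to argue that $\nu_\fp$ is part of a presentation with free middle term. Localizing gives
\[
0\lar R_\fp\xrightarrow{\ \nu_\fp\ } R_\fp^2\lar (I_\nu)_\fp\lar 0 .
\]
This is a free resolution of $(I_\nu)_\fp$ of length at most one, so $\projdim_{R_\fp}(I_\nu)_\fp\le 1$ and therefore $\projdim_{R_\fp}(R/I_\nu)_\fp\le 2$.

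The second step applies Auslander--Buchsbaum over the regular local ring $R_\fp$:
\[
\depth (R/I_\nu)_\fp=\hht(\fp)-\projdim(R/I_\nu)_\fp\ \ge\ \hht(\fp)-2 .
\]
For the reverse inequality, note that $\dim (R/I_\nu)_\fp\le \hht(\fp)-\hht(I_\nu R_\fp)\le \hht(\fp)-2$. This uses that every prime of $R_\fp$ containing $I_\nu$ has height at least $2$, which holds because $I_\nu$ has codimension two and $R$ is a polynomial ring, hence catenary and equidimensional. Combining the two inequalities gives $\depth(R/I_\nu)_\fp\ge\dim(R/I_\nu)_\fp$, so $(R/I_\nu)_\fp$ is Cohen--Macaulay. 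In fact this also shows that $(I_\nu)_\fp$ is perfect of height two, and it is generated by the $2$ entries of $\nu$ in a local basis, by Hilbert--Burch.

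The main point to check carefully is that the length-one resolution is genuinely valid after localization. Exactness is preserved by localization, and freeness of $\syz(\Theta)_\fp$ is exactly the local freeness hypothesis in Definition~\ref{free-locallyfree}. One must also rule out $(I_\nu)_\fp=R_\fp$ in the relevant case, but this is automatic since $\fp\supseteq I_\nu$. A more subtle point is that codimension two of $I_\nu$ must be applied to all minimal primes of $I_\nu$ contained in $\fp$. For this I would use that $I_\nu$ has no height-one components, which follows from the theorem's construction since $\gcd$ issues are absorbed into the twist $s$. This is the only step I expect to require some care, and it is handled by the codimension-two assertion in Theorem~\ref{Bourideal}(b).
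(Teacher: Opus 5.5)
Your proposal is correct and is essentially the paper's own argument: localize the Bourbaki sequence at $\fp\neq\fm$, use freeness of $\syz(\Theta)_\fp$ to get $\projdim_{R_\fp}(R/I_\nu)_\fp\le 2$, then compare the Auslander--Buchsbaum lower bound on depth with $\dim(R/I_\nu)_\fp\le\hht(\fp)-2$. The ``subtle point'' you flag at the end needs no extra argument: $\hht(I_\nu)=2$ already means every prime containing $I_\nu$ has height at least $2$, so $\hht(I_\nu R_\fp)\ge 2$ holds by definition.
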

\begin{proof}
Fix $\fp\neq \fm$ and localize the Bourbaki sequence at $\fp$. Since $\syz(\Theta)_{\fp}$ is free by
hypothesis, we obtain an exact sequence
\[
0\lar R_{\fp}\lar \syz(\Theta)_{\fp}\lar (I_{\nu})_{\fp}\lar 0,
\]
hence $\projdim_{R_{\fp}}((I_{\nu})_{\fp})\le 1$ which implies that $\projdim_{R_{\fp}}((R/I_{\nu})_{\fp})\le 2$. Since $R_{\fp}$ is a regular local ring,
Auslander--Buchsbaum yields
\[
\depth (R/I_{\nu})_{\fp}
=\depth R_{\fp}-\projdim_{R_{\fp}}((R/I_{\nu})_{\fp})
\ge \dim R_{\fp}-2.
\]
On the other hand, $\dim (R/I_{\nu})_{\fp}=\dim R_{\fp}-\hht(I_{\nu}R_{\fp})\le \dim R_{\fp}-2$.
Therefore
\[
\depth (R/I_{\nu})_{\fp}\ge \dim (R/I_{\nu})_{\fp}.
\]
We obtain $\depth (R/I_{\nu})_{\fp}=\dim (R/I_{\nu})_{\fp}$, hence
$(R/I_{\nu})_{\fp}$ is Cohen--Macaulay.
\end{proof}

\begin{Example}
Let $\Theta$ be a $2\times 4$ generic matrix of linear form in $R=k[x_1,\ldots, x_n]$. The genericity assumption forces $n=8$. Thus
\[
\Theta\;=\;
\begin{bmatrix}
	x_1 & x_2 & x_3 & x_4 \\
	x_5 & x_6 & x_7 & x_8
\end{bmatrix}.
\]
The expected codimension of $I_2(\Theta)$ is $3$.
Consequently, $\dim \Q=5$ and $\grade(I_2(\Theta))=3$. Thus  $\Q$  has the Buchsbaum-Rim graded minimal free resolution
\[ 0\rar R^2(-4)\rar R^4(-3)\rar R^4(-1)\rar R^2\rar  \Q\rar  0.\]
Using this resolution, we compute the Hilbert series 
\[ \Hs_\Q(t)=\dfrac{2t+2}{(1-t)^5}.\]
It follows that, in our notation, $e_0(\Q)=e_1(\Q)=0$ and $\deg(\Q)=4$. Thus
\[
\Bour(\Theta)=\fq_{\Theta}=3.
\] 
\end{Example}

\begin{Remark}\label[Remark]{Bourbounds}
From formula~\eqref{MainBourDegFormual} and the fact that 
$0 \le e \le d = d_1 + d_2$, we obtain the following bounds for the Bourbaki degree:
\[
\Bour(\Theta) \leq \fq_\Theta + \ell_{\Q} + e_1(\Q),
\]
with equality when $e=d$. Moreover, By Theorem~\ref{Maximale0} $e_0(\Q) \leq d$, then 
\[
\Bour(\Theta) \geq \fq_\Theta + \ell_{\Q} + e_1(\Q) - \frac{(d + e_0(\Q))^2}{4},
\]
and this lower bound is attained when $e = \frac{d - e_0(\Q)}{2}$. 
\end{Remark}

\subsection{Special families of matrices}

Inspired by the literature on logarithmic tangent modules and divisors, we introduce the following definitions:

\begin{Definition}
Let $R=k[x_1,\ldots,x_n]$ be a polynomial ring over an infinite field $k$ and $\Theta$ be a $2\times 4$ matrix of rank $2$ in $R$ whose first and second rows consist of homogeneous polynomials of degrees $d_1$ and $d_2$, respectively. If $\Theta$ is not free, we say it is:
\begin{itemize}
    \item[(a)] \emph{nearly free} if $\Bour(\Theta) = 1$;
    \item[(b)] \emph{$3$-syzygy} if there is a minimal free resolution for $\syz(\Theta)$ of the form
    $$
    \ldots \to F \to \syz(\Theta) \to 0
    $$
    where $F$ has rank three. 
\end{itemize}
\end{Definition}

\begin{Proposition}\label[Proposition]{prop:nearly-free-3-syzygy}
Let $\Theta$ be a matrix as above, and assume $\Theta$ is not free. Then:
\begin{itemize}
    \item[(a)] $\Theta$ is nearly free if and only if the syzygy module admits a minimal free resolution of the form:
    $$
0 \to R(s-2) \rightarrow R(s-1)^{2} \oplus R(-e) \rightarrow \syz(\Theta) \rightarrow 0,
    $$
    where $s = e-d+e_0(\Q)$.
    \item[(b)] $\Theta$ is $3$-syzygy if and only if the associated Bourbaki ideal $I_\nu$ is a complete intersection for some choice of minimal generating syzygy $\nu$.
\end{itemize}
In particular, $(a) \Rightarrow (b)$.
\end{Proposition}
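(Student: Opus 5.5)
The argument runs through the Bourbaki sequence $0\to R(-e)\xrightarrow{\nu}\syz(\Theta)\to I_\nu(s)\to 0$ of Theorem~\ref{Bourideal}, with $s=e-d+e_0(\Q)$, together with the transfer of resolutions in Theorem~\ref{Thm:Bourbaki-resolutions}. Since $\Theta$ is not free, $I_\nu$ is a proper homogeneous ideal of codimension two. Since $\deg(R/I_\nu)=\Bour(\Theta)$, the key point for (a) is the following characterization: a codimension two homogeneous ideal $I$ has $\deg(R/I)=1$ if and only if $I$ is generated by two independent linear forms.

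To prove this characterization, let $\fp$ be a codimension two associated prime of $R/I$. The associativity formula gives $1=\deg(R/I)=\sum_{\fp}\length(R_\fp/I_\fp)\deg(R/\fp)$. Hence there is a unique prime $\fp$ of codimension two in $\Min(R/I)$, it has $\deg(R/\fp)=1$, and $I_\fp=\fp R_\fp$. A prime of degree one is linear, so $\fp=(l_1,l_2)$.

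The obstacle is that $I$ could a priori have embedded components, or components of higher codimension, so that $I\subsetneq\fp$ while $\deg(R/I)=1$. The unmixedness that would exclude this must come from $\syz(\Theta)$. Because $\syz(\Theta)$ is a second syzygy module, it is reflexive and satisfies Serre's condition $S_2$. Using the Bourbaki sequence, $\depth I_\nu(s)_\fq\ge\min(2,\hht\fq)-1$. It follows that $\depth (R/I_\nu)_\fq\ge \min(2,\hht\fq)-2$, which gives only the trivial bound. I will therefore state precisely that the definition of $\Bour$ sees only $\deg$, and use the structural direction. If $\Theta$ is nearly free in the sense that $I_\nu=(l_1,l_2)$, then part (a) of Theorem~\ref{Thm:Bourbaki-resolutions} applied to the Koszul resolution $0\to R(-2)\to R(-1)^2\to I_\nu\to0$ yields $0\to R(s-2)\to R(s-1)^2\oplus R(-e)\to\syz(\Theta)\to0$. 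Minimality holds because the Koszul differential has linear entries and $\nu$ is a minimal generator.

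For the converse of (a), I apply Theorem~\ref{Thm:Bourbaki-resolutions}(b) to the given minimal resolution, with $\nu$ among the generators. This yields a minimal resolution $0\to R(-2)\to R(-1)^2\to I_\nu\to0$, so $I_\nu$ is generated by two linear forms and has codimension two. These forms therefore form a regular sequence, and $\deg(R/I_\nu)=1$. If the forward implication $\deg=1\Rightarrow$ linear complete intersection needs unmixedness that is not available, I would fall back on the Hilbert polynomial. The resolution gives $\mathrm{HP}_{R/I_\nu}$ equal to that of a codimension two linear space. I would then argue that any codimension two ideal contained in $(l_1,l_2)$ with the same Hilbert polynomial in the top two coefficients has $(l_1,l_2)/I_\nu$ of dimension at most $n-4$. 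To exclude this case I would use that $\syz(\Theta)$ is reflexive: $\Ext^1_R(I_\nu,R)$ then controls $R/I_\nu$ having no components of codimension at least four, via the dual Bourbaki sequence $0\to I_\nu^\ast\to\syz(\Theta)^\ast\to R(e)\to\Ext^1(I_\nu(s),R)\to0$. I expect this unmixedness step to be the hardest.

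For (b), suppose first that the minimal resolution of $\syz(\Theta)$ has $F$ of rank three, and choose $\nu$ among the minimal generators. Theorem~\ref{Thm:Bourbaki-resolutions}(b) then gives a minimal generating set of $I_\nu$ with two elements. A codimension two ideal generated by two elements is a complete intersection, since in the polynomial ring the grade equals the height. Conversely, if $I_\nu=(a,b)$ is a complete intersection, part (a) of Theorem~\ref{Thm:Bourbaki-resolutions} applied to the Koszul complex gives a resolution of $\syz(\Theta)$ with $F_0(s)\oplus R(-e)$ of rank three. This resolution is minimal: the Koszul differential has entries $a,b\in\fm$, and the lift of the generators together with $\nu$ is minimal because $\nu$ is a minimal generator and $\Theta$ is not free. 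The implication (a)$\Rightarrow$(b) is then immediate, since $(l_1,l_2)$ is a complete intersection.
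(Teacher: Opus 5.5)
\textbf{Gap: the forward direction of (a) is not proved.} Nearly free means $\Bour(\Theta)=\deg(R/I_\nu)=1$, and you never derive $I_\nu=(l_1,l_2)$ from this. Instead you redefine nearly free as ``$I_\nu=(l_1,l_2)$'', which is exactly what has to be shown. Your fallback does not close the gap either. In that direction there is no resolution of $\syz(\Theta)$ yet from which to read a Hilbert polynomial, and the $\Ext^1$ sketch is never carried out.

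The reason you got stuck is a miscomputed depth estimate. Apply the depth lemma to $0\to R_\fq\to\syz(\Theta)_\fq\to (I_\nu)_\fq\to 0$. It gives $\depth (I_\nu)_\fq\ge\min\{\depth\syz(\Theta)_\fq,\ \depth R_\fq-1\}\ge\min\{2,\hht\fq-1\}$: the $-1$ falls on the free term, not on the $S_2$ term. So for $\hht\fq\ge 3$ you get $\depth(I_\nu)_\fq\ge 2$. This includes $\fq=\fm$, since $n\ge 3$. Hence $\depth(R/I_\nu)_\fq\ge 1$ and $\fq\notin\Ass(R/I_\nu)$. Therefore $I_\nu$ is unmixed of height two, and even saturated.

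With unmixedness, your first paragraph finishes the argument. There is a unique associated prime $\fp=(l_1,l_2)$, and $I_\nu$ is $\fp$-primary with $(I_\nu)_\fp=\fp R_\fp$, so $I_\nu=\fp R_\fp\cap R=\fp$. This unmixedness is exactly what the paper uses implicitly when it says that $\deg(R/I_\nu)=1$ forces $I_\nu$ to be a linear ideal of height two. It is stated explicitly in the proof of Theorem~\ref{Bourequi}(ii).

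The remaining parts are fine and follow the paper's route through Theorem~\ref{Thm:Bourbaki-resolutions}. Your minimality argument is more explicit than the paper's: a unit entry in the $R(-e)$-component of the lifted differential would put $\nu$ in $\fm\,\syz(\Theta)$. For the direct implication of (b) you argue that $I_\nu$ has two generators and height two, hence is generated by a regular sequence. The paper instead cuts the resolution at the image of the next differential, which is a rank-one reflexive and hence free module. From this it reads off a resolution $0\to R(l)\to R(-a)\oplus R(-b)\to I_\nu\to 0$. Both arguments work.
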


\begin{proof}
For a choice of minimal generating syzygy $\nu$ of degree $e \geq 0$, the associated Bourbaki short exact sequence
$$
0 \to R(-e) \to \syz(\Theta) \to I_\nu(s) \to 0
$$
relates free resolutions for $I_\nu$ and for $\syz(\Theta)$, as we have shown in \Cref{Thm:Bourbaki-resolutions}. For $(a)$, if $\Bour(\Theta) = 1$, then $\deg(R/I_\nu) = 1$ and it is forced that $I_\nu$ is a linear ideal of height $2$ in $R$, with a minimal free resolution given by a regular sequence of linear forms:
$$
0 \to R(-2) \to R(-1)^2 \to I_\nu \to 0,
$$
so the claimed resolution follows. Assuming the resolution for $\syz(\Theta)$ is as above, we use \Cref{Thm:Bourbaki-resolutions} to obtain a free resolution for $I_\nu$ as a complete intersection of two linear forms, and then conclude $\Bour(\Theta) = \deg(R/I_\nu) = 1$, showing it is nearly free.

For $(b)$, if $\Theta$ is $3$-syzygy, then we have a minimal free resolution of the form
$$
\ldots \to F_1 \xrightarrow{d} F \to \syz(\Theta) \to 0,
$$
so let $S \doteq \text{im } d$ denote the image of $d$ inside $F$. From the sequence, it follows that $S$ is a reflexive $R$-module of rank one, so $S$ is isomorphic to a shift of $R$. 

We may cut the free resolution at $S$, and simply write
$$
0 \to S \to F \to \syz(\Theta) \to 0
$$
for the minimal free resolution. Then, using \Cref{Thm:Bourbaki-resolutions} we relate to a free resolution for $I_\nu$ of the form
$$
0 \to R(l) \to R(-a)\oplus R(-b) \to I_\nu \to 0
$$
which describes $I_\nu$ as a complete intersection ideal inside $R$. 

Conversely, starting with a complete intersection with a minimal resolution as above, it can be lifted to a free resolution for $\syz(\Theta)$:
$$
0 \to R(l+s) \to R(s-a) \oplus R(s-b)\oplus R(-e) \to \syz(\Theta) \to 0
$$
meaning that $\Theta$ is $3$-syzygy. The minimality follows since, otherwise, $\Theta$ would be free.
\end{proof}

\subsection{Row-wise syzygy modules}\label{subsec:row-wise-syzygy}

From the inclusions $\syz(\Theta)\subseteq \syz(\Theta_\mathbf{f})$ and $\syz(\Theta)\subseteq \syz(\Theta_\mathbf{g})$ we obtain short exact sequences
\[
0\rar \syz(\Theta)\rar\syz(\Theta_\mathbf{f})\rar \Q_\mathbf{f}\rar 0,
\qquad
0\rar \syz(\Theta)\rar \syz(\Theta_\mathbf{g})\rar \Q_\mathbf{g}\rar 0,
\]
where $\Q_\mathbf{f}$ and $\Q_\mathbf{g}$ denote the respective cokernels.
\begin{Proposition}\label[Proposition]{lem:line-by-line-comparison}
Keeping the notation and assumptions as above, $\Q_\mathbf{f}$ and $\Q_\mathbf{g}$ are torsion-free graded $R$-modules of rank one. In particular, there exists graded ideals $I_\mathbf{f}$ and $I_\mathbf{g}$ of codimension at least $2$ such that 
\[\Q_\mathbf{f}\simeq I_\mathbf{f}(d_1-e_0(\Q)), \quad \Q_\mathbf{g}\simeq I_\mathbf{g}(d_2-e_0(\Q)).  \] 

Moreover, we have the formulas
\begin{align*}
\deg(R/I_{\textbf{f}}) &= \frac{e_0(\Q)^2-(2 d_1+1) e_0(\Q)}{2} - e_1(\Q) - \deg(R/\Theta_{\textbf{f}}) \\
\deg(R/I_{\textbf{g}}) &= \frac{e_0(\Q)^2-(2 d_2+1) e_0(\Q)}{2} - e_1(\Q) - \deg(R/\Theta_{\textbf{g}})
\end{align*}
\end{Proposition}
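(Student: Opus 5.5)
The plan is to realize $\Q_\mathbf{f}$ directly as a submodule of a free module of rank one, and then read off the twist and the degree from additivity of Hilbert polynomials. I treat $\mathbf{f}$; the case of $\mathbf{g}$ is symmetric.

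\textbf{Step 1: embedding $\Q_\mathbf{f}$ in $R(d_2)$.} Restrict the second row $\Theta_\mathbf{g}$ to $\syz(\Theta_\mathbf{f})\subseteq R^4$, giving a map $\syz(\Theta_\mathbf{f})\to R(d_2)$. Its kernel is $\syz(\Theta_\mathbf{f})\cap\syz(\Theta_\mathbf{g})=\syz(\Theta)$. Hence $\Q_\mathbf{f}\simeq \Theta_\mathbf{g}(\syz(\Theta_\mathbf{f}))\subseteq R(d_2)$. This image is nonzero: otherwise $\syz(\Theta)=\syz(\Theta_\mathbf{f})$, which is impossible since the ranks are $2$ and $3$. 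So $\Q_\mathbf{f}$ is torsion-free of rank one, isomorphic to $K_\mathbf{f}(d_2)$ for a nonzero homogeneous ideal $K_\mathbf{f}$.

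\textbf{Step 2: removing the divisorial part.} Since $R$ is a UFD, write $K_\mathbf{f}=h\,I_\mathbf{f}$, where $h$ is the gcd of the generators of $K_\mathbf{f}$. Then $I_\mathbf{f}$ is either $R$ or has codimension at least $2$, and $\Q_\mathbf{f}\simeq I_\mathbf{f}(d_2-\deg h)$.

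\textbf{Step 3: the cokernel of the embedding.} A snake-lemma argument, applied to the map of sequences $\syz(\Theta_\mathbf{f})\hookrightarrow R^4\to R(d_1)$ and $R(d_2)\hookrightarrow R(d_1)\oplus R(d_2)\to R(d_1)$, gives an exact sequence
\[
0\to R(d_2)/K_\mathbf{f}(d_2)\to \Q\to (R/\Jc_\mathbf{f})(d_1)\to 0 .
\]
Because $\gcd(\mathbf{f})=1$, the module $R/\Jc_\mathbf{f}$ has codimension at least $2$. Consequently the codimension-one part of $\Q$ is exactly that of $R/(h)$, and this should force $\deg h=e_0(\Q)$.

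\textbf{Step 4: Hilbert polynomial comparison.} Rather than rely only on Step 3, I would confirm the shift and obtain the degree formula by additivity of Hilbert polynomials, exactly as in the proof of Theorem~\ref{Bourideal}. The relevant sequences are
\[
0\to\syz(\Theta)\to\syz(\Theta_\mathbf{f})\to\Q_\mathbf{f}\to0,
\]
together with \eqref{FundSequ} (which expresses $\mathrm{HP}_{\syz(\Theta)}$ through $\mathrm{HP}_\Q$) and \eqref{Mf} (which expresses $\mathrm{HP}_{\syz(\Theta_\mathbf{f})}$ through $\mathrm{HP}_{R/\Jc_\mathbf{f}}$). Write $\Q_\mathbf{f}\simeq I_\mathbf{f}(c)$. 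Equating the coefficient of $t^{n-2}$ pins down $c$, to be matched with the stated $d_1-e_0(\Q)$ once the grading conventions for the rows are fixed. Equating the coefficient of $t^{n-3}$, with $\deg(R/\Theta_\mathbf{f})$ read as the $t^{n-3}$ contribution of $R/\Jc_\mathbf{f}$ (zero if $\Jc_\mathbf{f}$ has codimension $\ge3$), yields the displayed formula for $\deg(R/I_\mathbf{f})$. These are routine binomial expansions of the same kind as in Theorem~\ref{Bourideal}.

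\textbf{Main obstacle.} I expect the difficulty to be bookkeeping rather than ideas: keeping the twists consistent between \eqref{Theta}, \eqref{Mf} and the target $R(d_2)$, and using the sign convention for $e_1(\Q)$ that the paper adopts when $e_0(\Q)=0$. The formula should come out exactly as stated only with those conventions; otherwise the shift will appear as $d_2-e_0(\Q)$ versus $d_1-e_0(\Q)$, or the sign of $e_1(\Q)$ will flip. To guard against this, I would check the final formulas on the free example with $f$ and $g$ in disjoint variables, and on Example~\ref{D2B1}.
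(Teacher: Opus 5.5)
Your route is the paper's route: embed $\Q_\ff$ in $R(d_2)$ via $\Theta_\gg$, pass to an ideal of codimension $\ge 2$, and compare Hilbert polynomials. However, the step where you propose to match the twist ``with the stated $d_1-e_0(\Q)$ once the grading conventions for the rows are fixed'' cannot be carried out.

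The conventions are already fixed by \eqref{Theta}. Your Step 1 places $\Q_\ff$ inside $R(d_2)$, and once $I_\ff$ has codimension $\ge 2$ the twist is uniquely determined, by the $t^{n-2}$ coefficient or equivalently by the reflexive hull $R(c)$ of $I_\ff(c)$. Carrying out your Step 4 with
\[
\mathrm{HP}_{\Q_\ff}(t)=\binom{t+d_2+n-1}{n-1}-\mathrm{HP}_{\Q}(t)+\mathrm{HP}_{R/\Jc_\ff}(t+d_1)
\]
gives $c=d_2-e_0(\Q)$ and
\[
\deg(R/I_\ff)=\frac{e_0(\Q)^2-(2d_2+1)e_0(\Q)}{2}-e_1(\Q)-\deg(R/\Jc_\ff).
\]
Here $e_1(\Q)$ is normalized as in \eqref{HPQ}, so the sign needs no adjustment; it becomes $+e_1(\Q)$ only under the positive convention adopted when $e_0(\Q)=0$.

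This is exactly what the paper's own proof obtains: $s_1=d_2-e_0(\Q)$, and a final formula with $2d_2+1$. The subsequent Remark also uses $I_\ff(d_2)$. So the displayed statement has $d_1$ and $d_2$ interchanged, and it is false when $d_1\neq d_2$. For instance, take $\Theta=\begin{bmatrix}0&0&0&1\\ f_1&f_2&f_3&0\end{bmatrix}$ with $\deg f_i=d>0$ and $J=(f_1,f_2,f_3)$. Then $\Q_\ff\simeq J(d)$ and $\Q_\gg\simeq R$, so neither carries the claimed twist ($0$, respectively $d$). You should say this explicitly and prove the corrected statement, rather than defer to unspecified conventions.

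With that correction your argument is complete, and in places cleaner than the paper's. You compare $\mathrm{HP}_{\syz(\Theta_\ff)}-\mathrm{HP}_{\syz(\Theta)}$ directly from \eqref{FundSequ} and \eqref{Mf}. The paper instead detours through the Bourbaki sequence and the formula of Theorem~\ref{Bourideal}, which presupposes that $\Theta$ is not free.

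Your snake-lemma sequence $0\to(R/K_\ff)(d_2)\to\Q\to(R/\Jc_\ff)(d_1)\to0$ does force $\deg h=e_0(\Q)$. This holds because $\gcd(\ff)=1$ and $R/hI_\ff$ has the same codimension-one part as $R/(h)$, so it already settles the twist without Step 4.

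Finally, dividing by the gcd in Step 2 is the right justification for codimension $\ge 2$. The paper's observation that $(I_\ff)_\fp\simeq R_\fp$ at height-one primes holds for every nonzero ideal of a DVR, so it does not by itself show $I_\ff\not\subseteq\fp$.
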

\begin{proof}
The rank assertion follows from the additivity of rank in short exact sequences. The torsion-freeness follows from the exact sequences
\[
\begin{aligned}
	&0 \longrightarrow \syz(\Theta)
	\longrightarrow \syz(\Theta_{\mathbf f})
	\xrightarrow{\; \Theta_{\mathbf g}\!\mid_{\syz(\Theta_{\mathbf f})}\;}
	R(d_2), \\[6pt]
	&0 \longrightarrow \syz(\Theta)
	\longrightarrow \syz(\Theta_{\mathbf g})
	\xrightarrow{\; \Theta_{\mathbf f}\!\mid_{\syz(\Theta_{\mathbf g})}\;}
	R(d_1).
\end{aligned}
\]
Since any torsion-free $R$-module of rank one is an ideal in $R$, it follows that there exists ideals $I_\ff(s_1)$ and $I_\gg(s_2)$ for some integers $s_1,s_2$. 
Since $\syz(\Theta)$ and $\syz(\Theta_\ff)$ are reflexive module, for all $\fp\in \spec{R}$ with $\hht(\fp)=1$ the $R_\fp$-modules $\syz(\Theta)_\fp$ and $\syz(\Theta_\ff)_\fp$ are reflexive and free as $R_\fp$ is one dimensional regular local ring. Then $(I_\ff)_\fp\simeq \syz(\Theta)_\fp/\syz(\Theta_\ff)_\fp\simeq R_\fp$ which implies that $\hht(I_\ff)\geq 2$ and $\hht(I_\gg)\geq 2$. 

To obtain the formulas for the degrees of $R/I_{\textbf{f}}$ and $\deg(R/I_{\textbf{g}})$, we use the short exact sequences involving $\syz(\Theta_\textbf{f})$, $\syz(\Theta_\textbf{g})$ and $\syz(\Theta)$. We only prove the assertions for $\Q_\ff)$. The corresponding statements for $\Q_\gg$  follow verbatim by symmetry.

From the sequence 
$$
0 \to \syz(\Theta_{\textbf{f}}) \to R^4 \xrightarrow{\Theta_{\textbf{f}}} R(d_1) \to (R/\Theta_{\textbf{f}})(d_1) \to 0
$$
we obtain one equation
\begin{align*}
\Hilb_{\syz(\Theta_{\textbf{f}})}(t) &= \Hilb_{R^4}(t) - \Hilb_{R(d_1)}(t) + \Hilb_{(R/\Theta_{\textbf{f}})(d_1)}(t)\\
&= 4 \binom{t+n-1}{n-1} - \binom{t+d_1+n-1}{n-1} + \Hilb_{(R/\Theta_{\textbf{f}})(d_1)}(t).
\end{align*}
On the other hand, we have the module $\syz(\Theta)$ and exact sequences
$$
0 \to \syz(\Theta) \to R^4 \xrightarrow{\Theta} R(d_1)\oplus R(d_2) \to \Q \to 0
$$
and 
$$
0 \to R(-e) \xrightarrow{\nu} \syz(\Theta) \to I_\nu(s) \to 0.
$$
which amount to relationships
\begin{align*}
\Hilb_{\syz(\Theta)}(t) &= \Hilb_{R^4}(t) - \Hilb_{R(d_1)}(t) - \Hilb_{R(d_2)}(t) + \Hilb_{\Q}(t)\\
\Hilb_{\syz(\Theta)}(t) &=
\Hilb_{R(-e)}(t) + \Hilb_{R(s)}(t) - \Hilb_{R/I(s)}(t)\\
&= \binom{t-e+n-1}{n-1} + \binom{t + s + n-1}{n-1} - \frac{\deg(R/I)}{(n-3)!}t^{n-3} + (\text{lower terms in t}).
\end{align*}
There is a short exact sequence, for some $s_1 \in \mathbb{Z}$, of the form
$$
0 \to \syz(\Theta) \to \syz(\Theta_{\textbf{f}}) \xrightarrow{\Theta_{\textbf{g}}} I_{\textbf{f}}(s_1) \to 0,
$$
where $I_{\textbf{f}} \subset R$ is an ideal of codimension at least two. This means that we can write its Hilbert polynomials as 
$$
\Hilb_{R/(I_{\textbf{f}})}(t) = \frac{\deg(R/I_{\textbf{f}})}{(n-3)!}t^{n-3} + (\text{lower terms in t})\\
$$
and another equation between Hilbert polynomials:
\begin{align*}
\Hilb_{\syz(\Theta_{\textbf{f}})}(t) 
&= \Hilb_{\syz(\Theta)}(t) + \Hilb_{R(s_1)} - \Hilb_{R/I_f}(t)\\
&= \binom{t-e+n-1}{n-1} + \binom{t + s + n - 1}{n-1} + \binom{t + s_1 + n - 1}{n-1}\\ 
&- \frac{\deg(R/I)}{(n-3)!} t^{n-3} - \frac{\deg(R/I_f)}{(n-3)!}t^{n-3} + (\text{lower terms in t}).
\end{align*}
The codimension of $R/\Theta_{\textbf{f}}$ is at least two, from our hypothesis, hence its dimension is at most $n-2$, and we may write
$$
\Hilb_{R/\Theta_{\textbf{f}}}(t) = \deg(R/\Theta_{\textbf{f}}) \binom{t+n-3}{n-3} + (\text{lower terms in t}),
$$
so that
\begin{align*}
\Hilb_{\syz(\Theta_{\textbf{f}})}(t) = 4 \binom{t+n-1}{n-1} - \binom{t+d_1+n-1}{n-1} +\frac{\deg(R/\Theta_{\textbf{f}})}{(n-3)!}t^{n-3} + (\text{lower terms in t}).
\end{align*}
Now, we may join the formulas together to obtain, from the identity $\Hilb_{\syz(\Theta_{\textbf{f}})}(t) = \Hilb_{\syz(\Theta)}(t) + \Hilb_{I_{\textbf{f}}(s_1)}(t)$, an equation
$$
0 = \frac{(e-d_1-s-s_1)}{(n-2)!}t^{n-2}+C t^{n-3} + (\text{lower terms in t})
$$
where
\begin{align*}
C &\doteq \frac{- (d_1^2 + e^2 + s^2 + s_1^2) + 2\deg(R/I) + 2\deg(R/I_{\textbf{f}}) + 2\deg(R/\Theta_{\textbf{f}})}{2(n-3)!}.
\end{align*}
From the term of degree $n-2$ and since $s = e-d+e_0(\Q)$, we obtain $
s_1 = d_2-e_0(\Q)$. Now, comparing the terms in
$2\deg(R/I)$ and in $d_1^2 + e^2 + s^2 + s_1^2$,
we obtain that
\begin{align*}
2\deg(R/I) - (d_1^2 + e^2 + s^2 + s_1^2) &= 2d_2 e_0(\Q) - e_0(\Q)^2 + e_0(\Q) + 2e_1(\Q)\\
&= e_0(\Q)(2d_2 - e_0(\Q) + 1) + 2e_1(\Q).
\end{align*}
Hence, from $C = 0$, we obtain the equation
\begin{align*}
-2\deg(R/I_{\textbf{f}}) &= 2\deg(R/I) - (d_1^2 + e^2 + s^2 + s_1^2) + 2\deg(R/\Theta_{\textbf{f}})\\
&= e_0(\Q)(2d_2 - e_0(\Q) + 1) + 2e_1(\Q) + 2\deg(R/\Theta_{\textbf{f}}),
\end{align*}
which gives
$$
\deg(R/I_{\textbf{f}}) = \frac{e_0(\Q)^2-(2 d_2+1) e_0(\Q)}{2} - e_1(\Q) - \deg(R/\Theta_{\textbf{f}}).
$$
\end{proof}

\begin{Remark}
Let us suppose that $\syz(\Theta)$ is free. Then $\deg(R/I_\nu) = 0$, and we obtain the identity
$$
(d-e)(e+e_0(\Q)) = \fq_{\Theta} + \ell_{\Q} + e_1(\Q).
$$
From the previous formulas, we also get a simplification
\begin{align*}
\deg(R/I_{\textbf{f}}) &= \frac{(e^2 + s^2 + s_1^2 + d_1^2)}{2} - \deg(R/\Theta_{\textbf{f}})\\
&= \frac{e_0(\Q)(e_0(\Q) - 2d_2 - 1)}{2} - e_1(\Q)
 - \deg(R/\Theta_{\textbf{f}}) \geq 0,
\end{align*}
which gives 
$e_1(\Q) \leq \frac{e_0(\Q)(e_0(\Q) - 2d_2 - 1)}{2} - \deg(R/\Theta_{\textbf{f}}) \leq \frac{e_0(\Q)(e_0(\Q) - 2d_2 - 1)}{2}$.
\end{Remark}

\begin{Remark}
The formula above, whenever $e_0(\Q) = 0$, turns into
$$
\deg(R/I_{\textbf{f}}) = e_1(\Q) - \deg(R/\Theta_{\textbf{f}}) \geq 0,
$$
matching the intuition that the degree of $R/I_{\textbf{f}}$ comes from the codimension two part of $\Q$, removing the part from the zeros of the corresponding row. Moreover, we obtain the inequality
$$
e_1(\Q) \leq \min\{\deg(R/\Theta_1), \deg(R/\Theta_2)\}.
$$

Now, assume $e_f \geq e_g$, $e_f < e$ and let $\nu_f: R(-e_f) \to \syz(\Theta_f)$ be a syzygy of minimal degree. Using the diagram
\[\begin{tikzcd}
	&& {R(-e_f)} \\
	0 & {\syz(\Theta)} & {\syz(\Theta_f)} & {I_{\textbf{f}}(d_2)} & 0
	\arrow["{\nu_f}"', from=1-3, to=2-3]
	\arrow["\phi", dashed, from=1-3, to=2-4]
	\arrow[from=2-1, to=2-2]
	\arrow[from=2-2, to=2-3]
	\arrow[from=2-3, to=2-4]
	\arrow[from=2-4, to=2-5]
\end{tikzcd}\]
we obtain an induced map $\phi$ which is injective. This corresponds to a hypersurface of degree $d_2 + e_f$ containing the closed subscheme defined by the ideal $I_{\textbf{f}}$, which in this context has degree 
$$
\deg(R/I_{\textbf{f}}) = e_1(\Q) - \deg(R/\Theta_{\textbf{f}}).
$$
If we moreover assume that $\textbf{f}$ is a regular sequence, then $\deg(R/\Theta_{\textbf{f}}) = 0$ and $e_f = d_1$, so that
$$
\deg(R/I_{\textbf{f}}) = e_1(\Q)
$$
and the closed subscheme corresponding to $R/I_{\textbf{f}}$ is contained in a hypersurface of degree $d = d_1 + d_2$.
\end{Remark}


\section{The Bourbaki degree of  three-equigenerated ideals}\label{sec:Bour-of-ideals}
{ In this section, we extend the notion of Bourbaki degree introduced in~\cite{MAA} for Jacobian ideals of reduced plane curves to the more general setting of three--equigenerated ideals. Recall that if $\Theta$ is a homogeneous $2\times 4$ matrix whose first row has degree $0$ and whose second row has degree $d>0$, then $\Theta$ is graded equivalent to a matrix of the form
\[
\begin{bmatrix}
0&0&0&1\\
f_1&f_2&f_3&0
\end{bmatrix},
\qquad f_1,f_2,f_3\in R_d.
\]
Indeed, since the first row has degree $0$, it is a nonzero vector in $k^4$, and hence, after a graded change of basis in the source $R^4$, we may assume that it is $(0,0,0,1)$. Thus $\Theta$ can be written as
\[
\begin{bmatrix}
0&0&0&1\\
f_1&f_2&f_3&f_4
\end{bmatrix}
\]
for suitable $f_1,f_2,f_3,f_4\in R_d$. Now, since $d_2>0$, multiplication by $f_4$ defines a degree-zero graded map $R\to R(d_2)$. Therefore, applying the graded automorphism
\[
\begin{bmatrix}
1&0\\
-f_4&1
\end{bmatrix}
\]
of $R\oplus R(d_2)$, we obtain
\[
\begin{bmatrix}
1&0\\
-f_4&1
\end{bmatrix}
\begin{bmatrix}
0&0&0&1\\
f_1&f_2&f_3&f_4
\end{bmatrix}
=
\begin{bmatrix}
0&0&0&1\\
f_1&f_2&f_3&0
\end{bmatrix}. 
\]
Observe, however, that this graded equivalence need not preserve the condition that the entries of the second row be relatively prime. The relevant invariant is instead the ideal of maximal minors. Thus, strictly speaking, the above construction gives a correspondence between matrices of the form
\[
\Theta=\begin{bmatrix}
0&0&0&1\\
f_1&f_2&f_3&0
\end{bmatrix}. 
\]
and ordered triples of homogeneous generators of the same degree. The associated ideal is recovered as $I_2(\Theta)=(f_1,f_2,f_3)$, so up to changing the chosen generators, this amounts to a correspondence with three--equigenerated ideals generated by forms of the same degree and having no nonconstant common divisor. In this way, the Bourbaki construction developed in the previous sections applies directly to an ideal 
$$J=(f_1,f_2,f_3)\subseteq R=k[x_1,\ldots,x_n]$$ generated by three homogeneous forms of the same degree $d$, with $\gcd(f_1,f_2,f_3)=1$. }

\begin{Proposition}\label[Proposition]{Bourequigen}
Let $J=(f_1,f_2,f_3)\subset R$ be an ideal generated by three homogeneous forms of degree $d$, with $\gcd(f_1,f_2,f_3)=1$ and let $\Theta$ be the associated matrix. Then 
\begin{equation}\label{bourthetaJ}
\Bour(\Theta)=
\begin{cases}
d^2, & \text{if $J$ is a complete intersection},\\[4pt]
e^2-ed+d^2-\deg(R/J), & \text{if $J$ is an almost complete intersection}.
\end{cases}
\end{equation}
In particular, if $J$ is the Jacobian ideal of a reduced curve $X=V(f)\subseteq \PP^2_k$ defined by the homogeneous polynomial $f\in R=k[x_1,x_2,x_3]$ of degree $d+1$, then
 \[
 \Bour(X)=e^2-ed+d^2-\tau(X),
 \] 	
 where $\tau(X)$ is the total Tjurina number of $X$. 
\end{Proposition}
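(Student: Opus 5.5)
The plan is to specialize Theorem~\ref{Bourideal} (equivalently formula~\eqref{MainBourDegFormual}) to the normal form
\[
\Theta=\begin{bmatrix}0&0&0&1\\ f_1&f_2&f_3&0\end{bmatrix},
\]
so that $d_1=0$ and $d_2=d$. Then the total degree $d_1+d_2$ equals $d$ and $\fq_\Theta=d^2$. What remains is to identify $\Q$, $e_0(\Q)$ and $e_1(\Q)$ in terms of $J$.

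\textbf{Step 1: identify $\Q$.} The fourth column of $\Theta$ maps onto the summand $R$ of $R\oplus R(d)$. Hence
\[
\Q\simeq R(d)/J\,R(d)=(R/J)(d).
\]
Since $\gcd(f_1,f_2,f_3)=1$, we have $\hht J\ge 2$. So $\dim\Q\le n-2$, which gives $e_0(\Q)=0$ and $\ell_\Q=0$.

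\textbf{Step 2: compute $e_1(\Q)$, splitting by the height of $J$.}
\begin{itemize}
\item If $J$ is a complete intersection, then $\hht J=3$ and $\dim\Q=n-3$. Proposition~\ref{dimQ-n-3-BR-resolution} shows $\Theta$ is not free, and Proposition~\ref{BourM-q} gives $\Bour(\Theta)=\fq_\Theta=d^2$ directly.
\item If $J$ is an almost complete intersection, then $\hht J=2$ and $\dim\Q=n-2$. The shift by $d$ does not change the leading coefficient of the Hilbert polynomial, so with the sign convention of Theorem~\ref{Bourideal} (the leading coefficient is taken positive) we get $e_1(\Q)=e_0(R/J)=\deg(R/J)$. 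One can also see this from the associativity formula, since $\Q_\fp=(R/J)_\fp$ at the height-two minimal primes.
\end{itemize}
In the second case the formula for $e_0(\Q)=0$ gives
\[
\Bour(\Theta)=e(e-d)+d^2-\deg(R/J)=e^2-ed+d^2-\deg(R/J).
\]

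\textbf{Step 3: the side conditions.} First, $e\ge 1$, because a constant syzygy of $\Theta$ would give a $k$-linear relation among $f_1,f_2,f_3$, which is excluded since they are minimal generators; Proposition~\ref{prop:compressibility} says the same thing. Second, if $\Theta$ happens to be free in the almost complete intersection case, the identity still holds with $\Bour(\Theta)=0$. This follows from the Remark after Proposition~\ref{lem:line-by-line-comparison}: the same Hilbert polynomial computation with $\deg(R/I_\nu)=0$ yields $(d-e)e=d^2-\deg(R/J)$, which is exactly the vanishing of the right-hand side.

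\textbf{Step 4: the Jacobian case.} Take $n=3$ and $J=J_f$, the Jacobian ideal of a reduced curve, so $\hht J=2$. Then $\deg(R/J_f)$ is the total Tjurina number $\tau(X)$, as the sum of the local lengths of $R/J_f$ at the singular points. Substituting into Step 2 gives $\Bour(X)=e^2-ed+d^2-\tau(X)$.

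\textbf{Main obstacle.} The delicate point is Step 2: pinning down the sign and normalization of $e_1(\Q)$ when $e_0(\Q)=0$, and checking that the twist by $d$ in $\Q\simeq(R/J)(d)$ affects only lower-order terms. I would verify this by recomputing the $t^{n-3}$ coefficient in the proof of Theorem~\ref{Bourideal} with $e_0(\Q)=0$ and $\mathrm{HP}_\Q(t)=\frac{\deg(R/J)}{(n-3)!}t^{n-3}+\cdots$.
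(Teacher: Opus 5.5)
Your proposal is correct and follows essentially the same route as the paper. You specialize \eqref{MainBourDegFormual} to $d_1=0$, $d_2=d$ (so $\fq_\Theta=d^2$), use Proposition~\ref{BourM-q} in the complete intersection case, and use $e_0(\Q)=0$, $e_1(\Q)=\deg(R/J)$ in the almost complete intersection case. Your additional checks (that $\Q\simeq(R/J)(d)$, that $e\ge 1$, and that the identity still holds when $\Theta$ is free) spell out details the paper's proof leaves implicit.
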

\begin{proof}
First note that the cokernel of the graded linear map defined by the matrix $\Theta$ is  $\Q=R/J=R/I_2(\Theta)$. Then $e=\mathrm{indeg}(\syz(\Theta))=\mathrm{indeg}(\syz(J))$. Since $\gcd(f_1,f_2,f_3)=1$, we have $\hht(J)\ge 2$.
If $J$ is a complete intersection, then necessarily $\hht(J)=3$, hence $\dim(R/J)=n-3$. 
Therefore Proposition~\ref{BourM-q} yields $\Bour(\Theta)=d^2$. Assume now that $J$ is an almost complete intersection. Then $\hht(J)=2$, so $\dim(R/J)=n-2$. In particular, the Hilbert polynomial of $R/J$ has degree $n-3$, and therefore $e_0(R/J)=0$ and $e_1(R/J)=\deg(R/J)$.
 The desired formula then follows from equation~\eqref{MainBourDegFormual}. 

The second statement follows from the fact that the Jacobian ideal of $f$ is equigenerated of codimension $2$ and hence the singular subscheme of $X$ consists of finitely many points. 
\end{proof} 
We now characterize the extremal (and small) values of the Bourbaki degree in terms of the homological behavior of $J$ and the geometry of the associated Bourbaki ideal. We define the Bourbaki degree of the ideal $J$ as $\Bour(J):=\Bour(\Theta)$. The Bourbaki degree $\Bour(J)$ can be viewed as a numerical measure of how far the three equigenerated ideal $J$ is from being perfect, and in particular from being a complete intersection. 
\begin{Theorem}\label[Theorem]{Bourequi}
Let $J=(f_1,f_2,f_3)\subset R=k[x_1,\ldots,x_n]$ be a three-equigenerated  ideal.
Let $\nu\in \syz(J)$ be a minimal syzygy of degree $e:=\indeg(\syz(J))$, and let $I_\nu\subset R$ be the associated Bourbaki ideal. Then:
\begin{enumerate}
    \item[{\rm (i)}] $\Bour(J)=0$ if and only if $J$ is a perfect ideal (i.e., $R/J$ is Cohen-Macaulay). 
    \item[{\rm (ii)}] $\Bour(J)=1$ if and only if  $I_\nu$ is a complete intersection of two linear forms. 
 \item[{\rm (iii)}] $\Bour(J)=2$ if and only if $I_\nu$ is exactly one of the following: 
 \begin{enumerate}
\item $I_\nu=\fp_1\cap \fp_2$, where $\fp_1,\fp_2$ are distinct height $2$ linear prime ideals.
\item $I_\nu=\fp$ is a height $2$ prime ideal with $\deg(R/\fp)=2$, and after a linear change of
coordinates one has $I_\nu=(\ell,q)$, where $\ell$ is a linear form and $q$ is a quadratic form.
\item $I_\nu$ is $\fp$-primary for some height $2$ linear prime $\fp$ and
$\length\big((R/I_\nu)_\fp\big)=2$.
\end{enumerate}
 \item[{\rm (iv)}] $\Bour(J)=d^2-1$ if and only if $e=d$ and $\deg(R/J)=1$. 
 \item[{\rm (v)}] $\Bour(J)=d^2$ if and only if $J$ is a complete intersection. 
\end{enumerate}
\end{Theorem}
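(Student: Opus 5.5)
The proof rests on two facts established earlier. The first is \Cref{Bourequigen}: $\Bour(J)=d^2$ if $J$ is a complete intersection, and $\Bour(J)=e^2-ed+d^2-\deg(R/J)$ if $J$ is an almost complete intersection (the case $\hht J=2$). The second is the dictionary between $\syz(J)=\syz(\Theta)$ and the Bourbaki ideal $I_\nu$ from \Cref{Thm:Bourbaki-resolutions} and \Cref{Bourideal}(a): $\Bour(J)=0$ exactly when $M_\nu$ is free, i.e.\ when $I_\nu=R$.

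For (i), note first that $\Q=R/J$, so $\syz(\Theta)$ is the second syzygy module of $R/J$. By \Cref{Bourideal}(a), $\Theta$ is free if and only if $\projdim R/J=2$. By Auslander--Buchsbaum this means $\depth R/J=n-2$. This equals $\dim R/J$ exactly when $\hht J=2$, which is the perfect case. Perfectness excludes $\hht J=3$, since then $\projdim=3$ by the Buchsbaum--Rim (Koszul) resolution and $\Bour=d^2\neq 0$. Conversely, if $\Theta$ is not free, then $I_\nu$ is a proper height two ideal, so $\deg(R/I_\nu)\ge 1$. Hence $\Bour(J)=0$ if and only if $\Theta$ is free, if and only if $J$ is perfect.

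For (ii) and (iii), when $\Theta$ is not free I classify height two homogeneous ideals $I_\nu$ with $\deg(R/I_\nu)\in\{1,2\}$ via the associativity formula. Here I need $I_\nu$ to be unmixed, so that the Hilbert data see all of $I_\nu$. This follows because $I_\nu(s)\simeq \syz(\Theta)/R\nu$ and $\syz(\Theta)$ is reflexive, hence satisfies $S_2$. Consequently $R/I_\nu$ has no embedded components of height $\ge 3$ in the relevant sense, or at least its top-dimensional part determines the degree. I expect this to be the delicate point. If it fails in general, I would instead phrase the classification in terms of the unmixed part of $I_\nu$.

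Granting unmixedness, the cases are as follows.
\begin{itemize}
\item Degree $1$ forces a single height two prime of degree one, with length one. This prime is linear, giving (ii) by \Cref{prop:nearly-free-3-syzygy}(a).
\item Degree $2$ with two minimal primes gives $\fp_1\cap\fp_2$, both linear.
\item Degree $2$ with one minimal prime of degree $2$ and length one gives $I_\nu=\fp$. A height two prime of degree two lies in a linear space, since a variety of minimal degree is either a quadric hypersurface in a linear space or linear. This gives $(\ell,q)$.
\item Degree $2$ with a linear prime of length $2$ gives case (c).
\end{itemize}
The converses are immediate from $\Bour(J)=\deg(R/I_\nu)$.

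For (iv) and (v), I use the almost complete intersection formula. If $e=d$ and $\deg R/J=1$, then $\Bour(J)=d^2-1$. For the converse I argue as follows.
\begin{itemize}
\item If $J$ is an almost complete intersection, then $\Bour(J)=d^2-e(d-e)-\deg(R/J)$ with $0\le e\le d$ and $\deg(R/J)\ge 1$. Hence $\Bour(J)\le d^2-1$, with equality if and only if $e(d-e)+\deg R/J=1$.
\item The case $e=0$ is excluded by \Cref{prop:compressibility}: it would give a zero column, but $f_1,f_2,f_3$ are nonzero, and the last column is nonzero as well.
\item So equality forces $e=d$ and $\deg(R/J)=1$. (Also $e(d-e)\ge d-1\ge 1$ when $0<e<d$.)
\end{itemize}
This proves (iv). For (v), the value $d^2$ is attained only in the complete intersection case, since the almost complete intersection case gives at most $d^2-1$.
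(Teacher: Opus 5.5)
Your route is the paper's: (i) through \Cref{Bourideal}(a) and Auslander--Buchsbaum, (ii)--(iii) through the associativity formula applied to $I_\nu$, and (iv)--(v) through the formula of \Cref{Bourequigen}. Your justification that $I_\nu$ is unmixed is correct and goes beyond the paper, which only asserts it. Since $\syz(\Theta)$ is reflexive, $\depth \syz(\Theta)_\fp\ge 2$ whenever $\hht\fp\ge 2$. The depth lemma applied to $0\to R_\fp\to\syz(\Theta)_\fp\to (I_\nu)_\fp\to 0$ then gives $\depth(R/I_\nu)_\fp\ge 1$ for every $\hht\fp\ge 3$. You can therefore drop the hedge.

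The step that fails is your exclusion of $e=0$ in (iv). \Cref{prop:compressibility} produces a zero column only after a $k$-linear change of basis of $R^4$. What it detects is a $k$-linear relation among the columns $(0,f_1),(0,f_2),(0,f_3),(1,0)$ of $\Theta$, which is exactly a relation $a_1f_1+a_2f_2+a_3f_3=0$ with $a_i\in k$ not all zero. The fact that each column is nonzero says nothing about such a relation.

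What actually excludes $e=0$ is that an almost complete intersection is minimally generated by three elements. Hence $f_1,f_2,f_3$ are $k$-linearly independent in $R_d$, and $e\ge 1$. Alternatively, for $d\ge 2$, $e=0$ makes $\Theta$ free by \Cref{prop:compressibility}, so $\Bour(J)=0\neq d^2-1$.

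Some such hypothesis is genuinely needed. Take $J=(x_1,x_2,x_1+x_2)$: it has height two, $d=1$, $e=0$, $\deg(R/J)=1$ and $\Bour(J)=0=d^2-1$, yet $e\neq d$. The paper's own exclusion at this point is also faulty. It applies \Cref{BourM-q} to the resulting two-generated ideal, but that ideal has height two rather than three, and it is in fact free with $\Bour(J)=0$.

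In (i), your claim that perfectness excludes $\hht J=3$ is false. A height-three complete intersection is perfect, yet $\Bour(J)=d^2\neq 0$. So (i) holds only when $\hht J=2$, which is the almost complete intersection setting of the Main Theorem. The paper's proof assumes this tacitly by invoking Hilbert--Burch. You should impose it as a hypothesis rather than try to deduce it.
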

\begin{proof}
By definition $\Bour(J)=\deg(R/I_{\nu}).$

(i) Assume first that $\Bour(J)=0$.  Then $\deg(R/I_\nu)=0$, so $I_\nu\simeq R$ and $M_\nu$ is free. By Theorem~\ref{Bourideal}(a), this implies that $\Theta$ is free, hence $\syz(\Theta)$ is free. Consequently, $\syz(J)$ is free. For an ideal of height $2$ generated by three forms, a free syzygy module means that the minimal free resolution of $R/J$ has length $2$. Thus $\projdim(R/J)=2$. Since $\dim R/J = n-2$, the Auslander–Buchsbaum formula gives
$ \depth(R/J) = n-2 = \dim(R/J)$, therefore $R/J$ is Cohen–Macaulay, i.e. $J$ is perfect.

Conversely, suppose $J$ is perfect. Then $R/J$ has projective dimension $2$, and its minimal free resolution is the Hilbert–Burch resolution. 
Hence $\syz(J)$ is free, and thus $\syz(\Theta)$ is free. By Theorem~\ref{Bourideal}(a), $M_\nu$ is free, so $I_\nu \cong R$ and $\deg(R/I_\nu)=0$. Therefore $\Bour(J)=0$.

\medskip

(ii) If $\Bour(J)=1$, then $\deg(R/I_\nu)=1$. Since $I_\nu$ is unmixed of height $2$, the associativity formula for multiplicities yields
\[
\deg(R/I_\nu)=\sum_{\fp\in \Min(R/I_\nu)} \length\big((R/I_\nu)_\fp\big)\, \deg(R/\fp).
\]
Each summand is a positive integer. Hence $\deg(R/I_\nu)=1$ implies that $I_\nu$ has a unique minimal prime $\fp$ and $\length((R/I_\nu)_\fp)=1$. Therefore, $I_\nu$ is generically reduced, and since $I_\nu$ is unmixed, it follows that $I_\nu=\fp$ is prime.

Let $\fm=(x_1,\ldots,x_n)$. Then $A:=(R/I_\nu)_\fm$ is a local domain with $\deg(A)=1$, hence $A$
is a regular local ring. Since $R_\fm$ is regular and $A\simeq R_\fm/(I_\nu)_\fm$ has codimension $2$, the ideal $(I_\nu)_\fm$ is generated by a regular sequence of length $2$. Thus $I_\nu=(f,g)$ for some homogeneous elements  $f,g\in R$ forming a regular sequence. Consequently,
\[
1=\deg(R/I_\nu)=\deg(R/(f,g))=\deg(f)\deg(g),
\]
and therefore $\deg(f)=\deg(g)=1$. Hence $f$ and $g$ are linear forms, so $I_\nu=(\ell_1,\ell_2)$.  Conversely, if $I_\nu=(\ell_1,\ell_2)$ with $\deg\ell_i=1$, then $\deg(R/I_\nu)=1$, hence $\Bour(J)=1$.

\medskip

(iii) Assume that $\Bour(J)=2$. Then $\deg(R/I_\nu)=2$. 
As $I_\nu$ is unmixed, every minimal prime $\fp$ of $I_\nu$ has height $2$. By the associativity formula for multiplicities,
\[
2=\deg(R/I_\nu)=\sum_{\fp\in\Min(R/I_\nu)}\length\big((R/I_\nu)_\fp\big)\, \deg(R/\fp),
\]
and each summand is a positive integer. Hence, either:
\begin{align*}
\Min(R/I_\nu)&=\{\fp_1,\fp_2\}, \text{ with } \length((R/I_\nu)_{\fp_i})=\deg(R/\fp_i)=1,\\
\Min(R/I_\nu)&=\{\fp\}, \hspace{0.7cm} \text{ with } \hspace{0.35cm} \length\big((R/I_\nu)_\fp\big)\cdot \deg(R/\fp) = 2.
\end{align*}

In the first case, $\deg(R/\fp_i)=1$ implies that $\fp_i$ is generated by two independent linear forms. Moreover, $\length((R/I_\nu)_{\fp_i})=1$ implies that $(I_\nu)_{\fp_i}=\fp_iR_{\fp_i}$
Let $I_\nu=Q_1\cap Q_2$ be a minimal primary decomposition, where $Q_i$ is $\fp_i$-primary. Then localizing the primary decomposition at $\fp_i$ kills the other component, so $(I_\nu)_{\fp_i}=(Q_i)_{\fp_i}.$
Therefore
\[
(Q_i)_{\fp_i}=\fp_iR_{\fp_i}.
\]
We claim that $Q_i=\fp_i$. Indeed, if $Q_i\neq \fp_i$, choose
$a\in \fp_i\setminus Q_i$. Since $a/1\in \fp_iR_{\fp_i}=(Q_i)_{\fp_i}$, there exists $s\notin \fp_i$ such that
$sa\in Q_i.$ But $Q_i$ is $\fp_i$-primary and $s\notin \fp_i=\sqrt{Q_i}$, so primaryness implies $a\in Q_i$,
a contradiction. Thus $Q_i=\fp_i$, and therefore
 $I_\nu=\fp_1\cap\fp_2$. 
 
 In the second case, either $\deg(R/\fp)=2$ and $\length((R/I_\nu)_\fp)=1$, or $\deg(R/\fp)=1$ and $\length((R/I_\nu)_\fp)=2$. If $\deg(R/\fp)=1$, then $\fp$ is a height $2$ linear prime and $I_\nu$ is $\fp$-primary with $\length((R/I_\nu)_\fp)=2$. If $\deg(R/\fp)=2$, then $\fp$ is a height $2$ prime of degree $2$, hence $\fp$ is degenerate and contains a linear form $\ell$. Thus $\fp=(\ell,q)$ for some quadric $q$, and therefore the same primaryness argument used earlier in the proof then gives $I_\nu=\fp=(\ell,q)$. Conversely, it is immediate from the associativity formula for degrees that each of the cases (a), (b), and (c) yields that $\deg(R/I_\nu)=\Bour(J)=2$. 

\medskip

(iv) Suppose that $\Bour(J)=d^2-1$. Then $e(e-d)=\deg(R/J)-1$. The left side is non‑positive because $e\leq d$. Hence $\deg(R/J)\leq 1$. But $\deg(R/J)\geq 1$. Thus $\deg(R/J)=1$. Then $e(e-d)=0$, , so either $e=0$ or $e=d$. If $e=0$, then there is a constant syzygy, which would imply that the three generators are linearly dependent over $k$. Then the ideal is actually generated by two forms, making it a complete intersection. But a complete intersection has $\Bour(J)=d^2$ by \Cref{BourM-q}, a contradiction. Therefore $e=d$. The converse is clear.

\medskip

(v) Suppose that $\Bour(J)=d^2$ and $J$ is not a complete intersection. Then $\hht(J)=2$ since it cannot be height $1$ because of the gcd condition, and by the Bourbaki degree formula, one has $\deg(R/J)=e(e-d)$. Since $e\leq d$, it follows that $\deg(R/J)\leq 0$ which is impossible with $\dim(R/J)=n-2\geq 1$. The converse follows from \Cref{BourM-q}. 
\end{proof}

The following result is the analogue of \cite[Theorem~2.10]{MAA} for three--equigenerated ideals. It shows that, under the natural local freeness hypothesis on the syzygy module, the Bourbaki degree is bounded above by the square of the initial degree of syzygies.
\begin{Theorem}\label[Theorem]{thm:equigenerated-Bour-bound}
Let $J=(f_1,f_2,f_3)\subset R=k[x_1,\ldots,x_n]$ be a three-equigenerated  ideal and $e=\indeg{(\syz(J))}$. Assume that   $\syz(J)$ is locally free on the punctured spectrum. Then 
\[\Bour(J)\leq e^2. \]
\end{Theorem}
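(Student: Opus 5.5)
The plan is to realise the Bourbaki ideal $I_\nu$ as (the saturation of) an ideal containing the entries of the minimal syzygy $\nu=(a_1,a_2,a_3)$. Then $\deg(R/I_\nu)$ is bounded by the degree of a complete intersection of two forms of degree $e$.

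First I would use $\Theta$ as in Proposition~\ref{Bourequigen}, so that $\syz(\Theta)\simeq\syz(J)\subseteq R^3$ and $\nu=(a_1,a_2,a_3)$ with $\deg a_i=e$. If $J$ is perfect or a complete intersection, the claim follows directly from Theorem~\ref{Bourequi}. In the complete intersection case one has $e=d$ by Proposition~\ref{BourM-q}, so $\Bour(J)=d^2=e^2$. Hence I may assume that $J$ is an almost complete intersection that is not free, and that $I_\nu$ is a proper unmixed ideal of height $2$, as in Theorem~\ref{Bourideal}(b).

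Next I would observe that $\gcd(a_1,a_2,a_3)=1$. If $h$ were a common factor of positive degree, then $\nu/h$ would still lie in $\ker(J)=\syz(J)$, since the kernel is saturated with respect to divisibility in $R^3$. This would be a syzygy of degree $<e$, contradicting $e=\indeg$. Thus $\hht(a_1,a_2,a_3)\ge 2$. Since $k$ is infinite, I can choose two general $k$-linear combinations $b_1,b_2$ of the $a_i$ that form a regular sequence of forms of degree $e$.

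The key step, and the one I expect to be the main obstacle, is the containment $(b_1,b_2)\subseteq I_\nu$, or at least $(a_1,a_2,a_3)\subseteq I_\nu$ locally at every prime $\fp\neq\fm$. Here I use local freeness. For $\fp\neq\fm$ we have $\syz(J)_\fp\simeq R_\fp^2$. Write $\nu=(u,v)$ in such a basis. The Bourbaki sequence localized at $\fp$ identifies $(I_\nu)_\fp$ with $(u,v)$ up to a unit twist. The inclusion $\syz(J)_\fp\hookrightarrow R_\fp^3$ is a $3\times 2$ matrix $M$ with $(a_1,a_2,a_3)^T=M(u,v)^T$, so every $a_i$ lies in $(u,v)=(I_\nu)_\fp$. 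Since $I_\nu$ is unmixed of height $2\le n-1$, the maximal ideal $\fm$ is not associated, so $I_\nu$ is saturated. A local containment at all $\fp\ne\fm$ therefore gives $(a_1,a_2,a_3)\subseteq I_\nu$ globally. One also has to check that the degree-zero identification $M_\nu\simeq I_\nu(s)$ is compatible with this local description, but that only changes generators by units.

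Finally, from $(b_1,b_2)\subseteq I_\nu$, with both ideals unmixed of height $2$, every minimal prime $\fp$ of $I_\nu$ is a minimal prime of $(b_1,b_2)$. Moreover $\length((R/I_\nu)_\fp)\le\length((R/(b_1,b_2))_\fp)$. The associativity formula then gives
\[
\Bour(J)=\deg(R/I_\nu)\le\deg\bigl(R/(b_1,b_2)\bigr)=e^2 .
\]
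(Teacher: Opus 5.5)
Your proof is correct, but it gets the key comparison between $H=(a_1,a_2,a_3)$ and $I_\nu$ by a different, more elementary route than the paper.

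\textbf{The paper's route.} It dualizes the bottom row of the Bourbaki diagram and identifies $\Ext^1_R(R^3/\widetilde\nu(R(-e)),R)$ with $(R/H)(e)$. It then uses local freeness to kill $\Ext^2_R(J,R)_{\fp}$ at each minimal prime $\fp$ of $I_\nu$. This yields a surjection onto $\Ext^1_R(I_\nu,R)_{\fp}$, so the length of $(R/I_\nu)_{\fp}$ is at most that of $(R/H)_{\fp}$. Finally it simply asserts $\deg(R/H)\le e^2$.

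\textbf{Your route.} You obtain $H_{\fp}\subseteq (I_\nu)_{\fp}$ by linear algebra in a local basis of $\syz(J)_{\fp}$. You globalize this to $H\subseteq I_\nu$ because $I_\nu$ is saturated. You then compare with the complete intersection $(b_1,b_2)\subseteq H$. This avoids Ext duality altogether. It also supplies the justification the paper omits for why $H$ contains a regular sequence of two forms of degree $e$, namely $\gcd(a_1,a_2,a_3)=1$ together with general linear combinations.

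\textbf{One step to write out.} You should say why the local identification is an equality of ideals and not just an isomorphism. First, $(M_\nu)_{\fp}\cong (u,v)R_{\fp}$ with $\gcd(u,v)=1$, because $M_\nu$ is torsion-free. Second, two isomorphic ideals of the UFD $R_{\fp}$, each either the unit ideal or of height at least two, coincide: the isomorphism is multiplication by a fraction, which must then be a unit.

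\textbf{A side remark.} For the degree count you only need the containment at the height-two minimal primes of $I_\nu$. At such a prime, $\syz(J)_{\fp}$ is the second syzygy of $(R/J)_{\fp}$ over a two-dimensional regular local ring, hence automatically free. So your saturation step is not needed, and in fact neither argument uses the local freeness hypothesis in an essential way. In the paper's argument, $\Ext^2_R(J,R)_{\fp}=\Ext^3_R(R/J,R)_{\fp}$ vanishes at height-two primes for the same reason.
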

\begin{proof}
Let $\nu=(a_1,a_2,a_3)\in\syz(J)$ be a minimal syzygy of degree $e$, and let $I_\nu$ be the Bourbaki
ideal defined by the exact sequence
\[
0\lar R(-e)\xrightarrow{\ \nu\ }\syz(J)\lar I_\nu(e-d)\lar 0.
\]
Set $H:=(a_1,a_2,a_3)\subset R$. Dualizing the bottom row of Diagram~\eqref{diag1} yields an exact sequence
\begin{equation}\label{exact1}
 \Ext^1_R\!\big(R^3/\widetilde\nu(R(-e)),R\big)
\lar
\Ext^1_R\!\big(I_\nu(e-d),R\big)
\lar
\Ext^2_R(J,R)\lar 0.   
\end{equation}
From the free resolution
\[
0\lar R(-e)\xrightarrow{\ \widetilde\nu\ }R^3\lar R^3/\widetilde\nu(R(-e))\lar 0
\]
we obtain
\begin{equation}\label{isom1}
\Ext^1_R\!\big(R^3/\widetilde\nu(R(-e)),R\big)\ \cong\ (R/H)(e).
\end{equation}
Fix $\fp\in\Min(R/I_\nu)$. Since $\hht(I_\nu)=2$, we have $\fp\neq\fm$. By hypothesis $\syz(J)_{\fp}$ is
free, hence localizing $0\lar \syz(J)\lar R^3(-d)\lar J\lar 0$ gives $\projdim_{R_{\fp}}(J_{\fp})\le 1$,
so $\Ext^2_R(J,R)_{\fp}=0$. Localizing~\eqref{exact1} at $\fp$ and using~\eqref{isom1} yields a surjection
\begin{equation}\label{surjc1}
  (R/H)_{\fp}\twoheadrightarrow \Ext^1_R\!\big(I_\nu(e-d),R\big)_{\fp}.  
\end{equation}

Moreover, localizing the Bourbaki sequence at $\fp$ gives
$0\lar R_{\fp}\lar \syz(J)_{\fp}\lar (I_\nu)_{\fp}\lar 0$, hence $\projdim_{R_{\fp}}((I_\nu)_{\fp})\le 1$,
and therefore $\Ext^1_R(I_\nu(e-d),R)_{\fp}\cong (R/I_\nu)_{\fp}$ (up to shift). Taking lengths in~\eqref{surjc1} for all $\fp\in\Min(R/I_\nu)$
 we obtain
\[
\length\big((R/I_\nu)_{\fp}\big)\le \length\big((R/H)_{\fp}\big).
\]
Since $I_\nu$ is unmixed, associativity of multiplicity yields $\deg(R/I_\nu)\le \deg(R/H)$. Finally,
$H$ is generated by three forms of degree $e$, hence $\deg(R/H)\le e^2$. Therefore
$\Bour(J)=\deg(R/I_\nu)\le e^2$.
\end{proof}

Although the argument above works only for three-equigenerated ideals $J$ with the extra-hypothesis of local freeness, the lack of examples motivates the following question.

\begin{Question}\label[Question]{quest:Bour-e^2-bound}
Does the bound $\Bour(\Theta) \leq e^2$ holds in general for any $2 \times 4$-matrix $\Theta$ with rank $2$?
\end{Question}

A related question is posed after \Cref{prop:initial-degree-1-n-4}, where we show that $\Bour(\Theta)\leq 2$ whenever $e = 1$. An example showing this bound is sharp would give a negative answer to the question above. 

\begin{Corollary}
Let $J=(f_1,f_2,f_3)\subset R=k[x_1,\ldots,x_n]$ be a three-equigenerated  ideal and $e=\indeg{(\syz(J))}$.  Assume that  $\syz(\Theta)$ is locally free on the punctured spectrum. Then 
\[d(d-e)\le \deg(R/J)\le d^2+e^2-ed.\]
\end{Corollary}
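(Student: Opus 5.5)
The corollary follows from combining the degree formula of Proposition~\ref{Bourequigen} with the bound of Theorem~\ref{thm:equigenerated-Bour-bound}, together with the nonnegativity of $\Bour(J)$. We split into two cases according to the height of $J$; since $\gcd(f_1,f_2,f_3)=1$, the height is at least $2$.

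\textbf{Complete intersection case.} If $J$ is a complete intersection, then $\hht(J)=3$ and $\dim R/J=n-3$. Under the paper's convention for $\deg(R/J)$, which is read off from the coefficient $e_1(\Q)$ that vanishes here, we have $\deg(R/J)=0$. We also have $e=d$, as in the proof of Proposition~\ref{BourM-q}. Both inequalities then read $0\le 0\le d^2$, which hold trivially. If instead one uses the genuine multiplicity $d^3$, the statement is only meant for the codimension-two situation. So I would remark that the corollary is implicitly stated for almost complete intersections and restrict to that case.

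\textbf{Almost complete intersection case.} Here $\hht(J)=2$, and Proposition~\ref{Bourequigen} gives the identity
\[
\deg(R/J)=d^2+e^2-ed-\Bour(J).
\]
\emph{Upper bound.} Since $\Bour(J)=\deg(R/I_\nu)\ge 0$, we get $\deg(R/J)\le d^2+e^2-ed$ immediately.

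\emph{Lower bound.} Since $\syz(J)\cong\syz(\Theta)$ up to the free summand structure, local freeness of $\syz(\Theta)$ on the punctured spectrum transfers to $\syz(J)$. To justify this, observe that for the normal form of $\Theta$, projecting $R^4\to R^3$ onto the first three coordinates identifies $\syz(\Theta)$ with $\syz(J)$ degree-preservingly, because the last coordinate of any syzygy of $\Theta$ is forced to vanish. Theorem~\ref{thm:equigenerated-Bour-bound} therefore applies and gives $\Bour(J)\le e^2$. Substituting into the identity,
\[
\deg(R/J)\ge d^2+e^2-ed-e^2=d(d-e).
\]

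\textbf{Main obstacle.} The only delicate point is the identification of $\syz(\Theta)$ with $\syz(J)$, so that the hypothesis, stated for $\Theta$, feeds into the theorem, stated for $J$. The convention for the complete intersection case is the other point needing care; both are bookkeeping rather than real difficulties.
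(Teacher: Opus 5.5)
Your proof is correct and follows the paper's argument, which reads the corollary off from $0\le\Bour(J)\le e^2$ (Theorem~\ref{thm:equigenerated-Bour-bound}) and the almost-complete-intersection case of \eqref{bourthetaJ}. Your check that projection onto the first three coordinates identifies $\syz(\Theta)$ with $\syz(J)$ makes explicit a step the paper leaves tacit.

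One correction concerns the complete intersection case. The paper's $\deg(R/J)$ is the genuine multiplicity, not $e_1(\Q)$, so no convention makes it $0$. In fact, for $n=3$ a complete intersection satisfies the local freeness hypothesis, has $e=d$, and has $\deg(R/J)=d^3>d^2=d^2+e^2-ed$ when $d\ge 2$. So restricting to almost complete intersections, as the paper's proof implicitly does, is necessary rather than a matter of convention.
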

\begin{proof}
Follows from $0\leq \Bour(J)\leq e^2$ and the formula~\eqref{bourthetaJ}.
\end{proof}

\begin{Remark}\label[Remark]{rmk:duPlessis-Wall}
The inequalities above, for $n=3$, generalize the ones obtained for Jacobian ideals in \cite{CTC-Plessis}.
\end{Remark}

\begin{Example}\label[Example]{ex:quadrics-Bour2}
Let us consider the ideal $J = (x_1 x_4, x_2 x_3, x_1 x_3 - x_2 x_4)$. The associated matrix will be
$$
\Theta = \begin{bmatrix}
x_1 x_4 & x_2 x_3 & x_1 x_3 - x_2 x_4 & 0\\
0       &    0    &          0        & 1
\end{bmatrix}.
$$
Here, the degree of a minimal syzygy is $e = 2$, $d_1 = 2$ and $d_2 = 0$, and
$$
I_2(\Theta) = (x_1 x_4, x_2 x_3, x_1 x_3 - x_2 x_4) = J.
$$
The associated primes to this ideal are $(x_3, x_4)$, $(x_1, x_2)$ and $(x_1, x_2, x_3, x_4)$, so $e_0(\Theta) = 0$ and $\Theta$ is locally free (from \Cref{free-locallyfree}). We may also compute the lengths in each minimal prime, to obtain $e_1(\Q) = 2$ from the associativity formula for the cokernel.

Using the Bourbaki degree formula, we obtain $\Bour(\Theta) = \Bour(J) = 2$ for every $n \geq 4$. This value for the Bourbaki degree cannot occur for $n = 3$ and Jacobian ideals of cubic plane curves, as was observed in \cite[Section $5.1$]{Futata2023} via a case-by-case analysis.

The minimal free resolution is of the form:
$$
0 \to R(-4) \to R^4(-3) \to R^5(-2) \to R^4 \xrightarrow{\Theta} R(2) \oplus R.
$$
For $n = 4$, this is an example of a matrix which induces a null-correlation distribution of degree $2$ in $\mathbb{P}^3$, as we will discuss in \Cref{subsec:syz-dist}.
\end{Example}

\begin{Theorem}\label[Theorem]{prop:loc-free-d-2-Bour2-eq-ideal}
Let $J=(f_1,f_2,f_3)\subset R=k[x_1,\ldots,x_n]$ be a three-equigenerated, height two ideal with $\deg(f_i) = 2$. Then:
\begin{itemize}
    \item[(a)] If $n = 3$, then $\Bour(J) \neq 2$;
    \item[(b)] If $n \geq 4$,  $\Theta$ is locally free and $J$ is saturated, then $\Bour(J) \neq 2$.
\end{itemize}
\end{Theorem}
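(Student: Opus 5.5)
Throughout I would take $d=2$, $e_0(\Q)=0$ and $\Q=R/J$. By Proposition~\ref{Bourequigen},
\[
\Bour(J)=e^2-2e+4-\deg(R/J).
\]
Since $J$ has height two it is not a complete intersection, so $e\ge 1$ by the argument in Theorem~\ref{Bourequi}(iv), and $e\le 2$ by \eqref{UPindeg}. Hence $\Bour(J)=2$ forces one of two numerical cases: either $e=1$ and $\deg(R/J)=1$, or $e=2$ and $\deg(R/J)=2$. The plan is to exclude each case separately.

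\emph{The case $e=1$.} I would invoke Theorem~\ref{thm:equigenerated-Bour-bound}, which gives $\Bour(J)\le e^2=1$ under local freeness. In (b) local freeness is a hypothesis. In (a), with $n=3$, it is automatic: $\syz(J)$ is reflexive, and a reflexive module over a regular local ring of dimension $\le 2$ is free, so $\syz(J)$ is free at every non-maximal prime. Thus $e=1$ is impossible in both situations.

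\emph{The case $e=2$, $\deg(R/J)=2$.} The idea is to show that $J$ coincides with an unmixed ideal $U$ of height two and degree two, and then to rule out each shape of $U$, in the spirit of Theorem~\ref{Bourequi}(iii).
\begin{enumerate}
\item For $n=3$, $U$ is the saturation of $J$, i.e. the ideal of a length-two scheme $Z\subset\PP^2$. Any such ideal is a complete intersection $(\ell,q)$.
\item For $n\ge 4$, Lemma~\ref{lem:loc-free-associated-primes}(a) says every associated prime of $R/J$ other than $\fm$ has height $\le 2$. Since $J$ is saturated, $\fm$ is not associated, so $J$ is unmixed of height two and $J=U$. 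By the associativity formula, $U$ is one of the following: $(\ell,q)$; $\fp_1\cap\fp_2$ with $\fp_1,\fp_2$ linear primes of height two; or a double structure on a linear codimension-two space.
\end{enumerate}

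\emph{Subcase $U=(\ell,q)$.} The quadrics of $U$ are $\ell R_1+kq$. Since $\gcd(f_1,f_2,f_3)=1$, not all $f_i$ lie in $\ell R_1$. After a linear change of generators we may write $J=(\ell a,\ell b,q+\ell c)$ with $a,b,c$ linear. This yields the linear syzygy $(b,-a,0)$, so $e\le 1$, contradicting $e=2$. For $n=3$ this step needs care, because only $J\subseteq U$ holds. Still, $J$ consists of three quadrics inside the $4$-dimensional space $U_2$, so at least two of them lie in $\ell R_1$, and the same syzygy appears.

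\emph{Subcase $U=\fp_1\cap\fp_2$.} If $\fp_1+\fp_2$ has height $4$, then $U=\fp_1\fp_2$ is minimally generated by four quadrics, so $U\neq J$. The non-saturated Example~\ref{ex:quadrics-Bour2} shows that saturatedness is essential exactly here. If $\fp_1+\fp_2$ has height $3$, then $U$ contains a linear form, which again contradicts $U=J$ with $J$ generated in degree $2$.

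\emph{Subcase: double structure on $(x_1,x_2)$.} Such an ideal either contains a linear form, or has the shape $(x_1^2,x_1x_2,x_2^2,x_1F-x_2G)$. In the second form, the only way to have exactly three quadric generators is to drop the fourth. Then $J=(x_1,x_2)^2$, which is perfect, so $\Bour(J)=0$ by Theorem~\ref{Bourequi}(i).

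\emph{Main obstacle.} I expect this last subcase to be the hardest step: one needs a clean classification of the degree-two, codimension-two, unmixed multiplicity-two structures on a linear space, and a check that none of them is generated by three quadrics with $e=2$. I would first reduce modulo a general linear section down to $n=4$ and use the standard description of double lines in $\PP^3$ as kernels $\fp\to\mathcal O_L(a)$.
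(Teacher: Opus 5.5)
For (a) your route differs from the paper's and is complete; for (b) you follow the paper, except that your plan for the double-structure subcase does not go through for $n\ge5$.

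Your numerical reduction to $(e,\deg(R/J))\in\{(1,1),(2,2)\}$ is the same as the paper's. So is the exclusion of $e=1$ through $\Bour(J)\le e^2$, with local freeness automatic for $n=3$. The paper proves (a) through the shape of the minimal resolution of $R/J$ (non-degenerate ideals, Lazard) and the induced resolutions of $I_\nu$. You instead use only that $J^{\mathrm{sat}}=(\ell,q)$ is the ideal of a length-two subscheme of $\PP^2$. Then $\dim_k(J_2\cap\ell R_1)\ge 3+3-4=2$ gives a linear syzygy, contradicting $e=2$. This argument is more elementary and needs only $J\subseteq(\ell,q)$. In (b) you follow the paper's trichotomy. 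Your split of the $\fp_1\cap\fp_2$ case by $\hht(\fp_1+\fp_2)\in\{3,4\}$ is more careful than the paper's ``four minimal generators''. A minor point: $e=0$ does not contradict height two, since $J$ is then a complete intersection of two quadrics. It gives $\Bour(J)=0$, so nothing is lost.

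The gap is the double-structure subcase, which you flagged yourself.
\begin{enumerate}
\item Cutting down to $n=4$ by general linear forms need not preserve saturation; that would require $\depth(R/J)\ge n-3$, which is not assumed. Your $n=4$ step uses $J=J^{\mathrm{sat}}$ essentially. For example, $J=(x_1^2,x_2^2,x_1x_4-x_2x_3)\subset k[x_1,\dots,x_4]$ has $e=2$ and $\deg(R/J)=2$, hence $\Bour(J)=2$. Its saturation is the Nollet double line $(x_1^2,x_1x_2,x_2^2,x_1x_4-x_2x_3)$. So saturation is essential here as well, not only for $\fp_1\cap\fp_2$. The paper's own hyperplane-section step for $n\ge5$ has the same weakness.
\item ``Dropping the fourth generator'' is not a real sub-case. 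The ideal $(x_1,x_2)^2$ has multiplicity $3$, so it is not a double structure. What is true at $n=4$ is that Nollet ideals are never generated by three quadrics.
\end{enumerate}

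There is a uniform fix, valid for all $n\ge4$, which also makes Nollet unnecessary. Since $J$ is $\fp$-primary with $\length((R/J)_\fp)=2$, the length-one module $\fp R_\fp/J_\fp$ is killed by $\fp R_\fp$. Hence $\fp^2R_\fp\subseteq J_\fp$, and so $\fp^2\subseteq J_\fp\cap R=J$. Because $J_1=0$ and $\dim_kJ_2\le 3=\dim_k(\fp^2)_2$, this forces $J=\fp^2$. But $\fp^2$ has multiplicity $3$ (and $e=1$), which is a contradiction.
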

\begin{proof}
From the local freeness assumption, which holds for $n=3$ without further hypothesis, we may use \Cref{thm:equigenerated-Bour-bound} to conclude that $\Bour(J) = 2 \Rightarrow e = 2$, since $e \leq d = 2$. 

For $(b)$, assuming that $\height(J) = 2$, we first note that the projective dimension of $R/J$ is either $2$ or $3$. From the Hilbert syzygy theorem, a general minimal free resolution for $R/J$ is of the form
$$
0 \to F_3 \to F_2 \to F_1 \to F_0 \to R/J \to 0,
$$
and the projective dimension is at most $3 = \dim(R)$. Since $\height(J) = 2$, $\dim(R/J) = 1$, and we know in general that $\depth(R/J) \leq \dim(R/J) = 1$. From the Auslander-Buchsbaum formula
$$
\text{proj.dim}(R/J) = \dim(R) - \depth(R/J),
$$
we obtain that $\text{proj.dim}(R/J) \geq 2$. We note moreover that, for $\height(J) = 2$, we have the equivalence
$$
\text{proj. dim}(R/J) = 2 \iff J \text{ is saturated.}
$$
Indeed, the maximal ideal $\fm=(x_1, x_2, x_3) \in \Ass(R/J)$ if and only if $\depth(R/J) = 0$, which corresponds to $\text{proj.dim.}(R/J) = 3$. A minimal free resolution for $R/J$ will be of the form
\begin{align}\label{eq:non-degenerate-res-d-2}
0 \to \bigoplus_{k=1}^{r-2} R(-4-e_k) \to \bigoplus_{i=1}^{r} R(-4) \to R(-2)^3 \to R \to R/J \to 0,     
\end{align}
since $\deg(f_i) = 2$ and $e = 2$ is the only possible degree for syzygies of $J$, where $r \geq 2$ and $e_k \geq 1$. This is a special case of a \emph{non-degenerate ideal}, a concept introduced in \cite[Definition 2.1]{Simis1977}, where the particular case of generators with $\deg(f_i) = 2$ follows from \cite[Proposition 6]{Lazard1977}.

Note that $r = 2$ if and only if the resolution is Hilbert-Burch, and this corresponds to the case where $J$ is perfect of height two, contradicting $\Bour(J) = 2 \neq 0$. Hence, $r \geq 3$. Using the additivity of the Hilbert polynomial in the sequence, we obtain
\begin{align*}
2 &= \sum_{i=1}^{r} 2 - \sum_{k=1}^{r-2} (2-e_{k})\\
&= 2r - 2r + 4 - \sum_{k=1}^{r-2} e_k
\end{align*}
showing that $\sum_{k=1}^{r-2} e_k = 2$. Together with $e_k \geq 1$, we obtain two possibilities, either $r = 4$ and $e_1=e_2 = 1$ or $r = 3$ and $e_1 = 2$. These correspond, via \Cref{Thm:Bourbaki-resolutions}, to the following free resolutions for the Bourbaki ideal $I_\nu$, after a choice of syzygy:
$$
0 \to R^2(-4) \to R^3(-2) \to I_\nu \to 0
$$
or
$$
0 \to R(-4) \to R^2(-2) \to I_\nu \to 0.
$$
The first case gives $\deg(R/I_\nu) = 3$, and the second gives $\deg(R/I_\nu) = 4$, from comparing the Hilbert polynomials, so both cases are impossible.

To show $(b)$, let $n \geq 4$, let us assume that $J$ is saturated. Then $\height(J) = 2$ and, from \Cref{lem:loc-free-associated-primes}, we conclude that the ideal $J$ is unmixed, hence a locally Cohen-Macaulay ideal. From the Bourbaki degree formula with $\Bour(J) = 2$ and $e = 2$, we obtain that $\deg(R/J) = e_1(\Q) = 2$, and by the associativity formula
$$
\deg(R/J) = \sum_{\fp \in \Ass{R/J}} \length(R_\fp/J_\fp)\deg(R/\fp)
$$
we have three possibilities:
\begin{itemize}
    \item[(a)] There is only one prime $\fp \in \Ass(R/J)$, so that $\length(R_\fp/J_\fp) = 2$ and $\deg(R/\fp) = 1$. But the condition on the degree implies that $\fp$ is the ideal of an $(n-2)$-hyperplane, given by two linear forms $\fp = (l_1, l_2)$. Now, we divide the proof into two cases, $n = 4$ and $n \geq 5$. For $n=4$, we are working over $\mathbb{P}^3$, and the scheme $R/J$ corresponds to a multiplicity two structure on a line. Then, from \cite[Proposition 1.4]{nollet1997hilbert}, since $J$ is locally Cohen-Macaulay, we obtain the following general form of the ideal:
    $$
    J = (x_1^2, x_1 x_2, x_2^2, x_1 g - x_2 f)
    $$
    where $f,g$ are homogeneous polynomials of the same degree without common zeros. Thus, this contradicts the hypothesis on the number of generators for $J$. For $n \geq 5$, we may assume that $\fp =(x_1, x_2)$, up to a change of coordinates, and consider the ideal $I = (x_3, \ldots, x_n)$. The associated closed locus $V(I) \simeq \mathbb{P}^3 \subset \mathbb{P}^{n-1}$, so that the intersection $V(J) \cap V(I) = V(J') \subset \mathbb{P}^3$ is a double structure on a line, and using the previous argument we obtain an ideal of the same form:
    $$
    J' = (x_1^2, x_1 x_2, x_2^2, x_1 g - x_2 f).
    $$
    Coming back to $n$-variables, since $J' = J + I$, we also need more than $3$ generators for $J$.
    
    \item[(b)] There are two associated primes $\fp_1, \fp_2$, so that $\length(R_{\fp_i}
    /J_{\fp_i}) = 1$ and $\deg(R/\fp_i) = 1$, so two ideals generated by two linear forms each.  Moreover, exactly as in the proof of Theorem~\ref{Bourequi}(iii), one obtains  $J = \fp_1 \cap \fp_2$. It follows that $J$ has four minimal generators, contradicting the hypothesis.
    \item[(c)] There is only one associated prime $\fp$, with $\length(R_\fp/J_\fp) = 1$ and $\deg(R/\fp) = 2$. Then $\fp$ is a height $2$ prime of degree $2$, hence $\fp$ is degenerate and contains a linear form $l$, with $\fp = (l,q)$ for some quadric polynomial $q \in R_2$.  Again, by the same argument as in the proof of Theorem~\ref{Bourequi}(iii), it follows that
 $J = \fp = (l,q)$, which contradicts the hypothesis on the generators of $J$.
\end{itemize}
Hence, all possibilities lead to a contradiction. If we remove the hypothesis of $J$ being saturated for $n \geq 4$, then the case $(b)$ may happen, as in \Cref{ex:quadrics-Bour2} above, where $J \neq \fp_1 \cap \fp_2$, but $I_\nu = \fp_1 \cap \fp_2$ is the intersection of two height $2$ linear primes.
\end{proof}

\begin{Example}
We include an example where the resolution cannot be as in \Cref{eq:non-degenerate-res-d-2}, where $J=(f_1, f_2, f_3)$ is three-equigenerated with $\deg(f_i) \geq 3$. The concept of \emph{non-degenerate} ideals is related to the syzygies of $\syz(J)$ being of degree at most $d$. This is not the case for example for irreducible plane curves with one node, say $f = xyz^{d-1} + x^{d+1} + y^{d+1}$ for $d \geq 4$. In particular, if $d = 4$, the minimal free resolution for the associated Jacobian ideal $J_f$ is
$$
0 \to R(-11)^2 \to R(-10) \oplus R(-8)^3 \to R(-4)^3 \to R \to R/J \to 0,
$$
where one of the generating syzygies has degree $6 \geq 4 = d$. In general, if the ideal $J$ is non-degenerate and the initial degree $e=d$, then the general shape of the resolution will be of the form
$$
0 \to \bigoplus_{k=1}^{r-2} R(-2d-e_k) \to \bigoplus_{i=1}^r R(-2d) \to R(-d)^3 \to R \to R/J \to 0.
$$
\end{Example}


\section{The Bourbaki degree of a linear matrix}\label{sec:linear-matrices}

In this section, we investigate the Bourbaki degree of a $2\times 4$ matrix of linear forms in $R=k[x_1,\ldots,x_n]$. We start by recalling the Kronecker--Weierstrass
normal form for matrices of linear forms, which provides a complete
classification up to equivalence.

Let $\Theta$ be a $2\times n$ matrix of linear forms in $R$. Two such matrices are
said to be \emph{equivalent} if they differ by left multiplication by an element of
$\GL_2(k)$ and right multiplication by an element of $\GL_n(k)$. The classical
Kronecker--Weierstrass theorem asserts that any $2\times n$ matrix of linear
forms is equivalent to a block matrix obtained by the concatenation of three
types of blocks: nilpotent blocks, Jordan blocks, and scroll blocks.

\medskip

\noindent
\textbf{Nilpotent blocks.}
A nilpotent block of length $m+1$ has the form
\[
D_m=
\begin{bmatrix}
	x_1 & x_2 & \cdots & x_m & 0\\
	0   & x_1 & \cdots & x_{m-1} & x_m
\end{bmatrix}.
\]

\noindent
\textbf{Jordan blocks.}
A Jordan block of length $m$ with eigenvalue $\lambda\in k$ has the form
\[
J_m(\lambda)=
\begin{bmatrix}
	y_1 & y_2 & \cdots & y_m\\
	\lambda y_1 & y_1+\lambda y_2 & \cdots & y_{m-1}+\lambda y_m
\end{bmatrix}.
\]

\noindent
\textbf{Scroll blocks.}
A scroll block of length $m$ has the form
\[
B_m=
\begin{bmatrix}
	z_1 & z_2 & \cdots & z_m\\
	z_0 & z_1 & \cdots & z_{m-1}
\end{bmatrix}.
\]

Here, all variables appearing in different blocks are assumed to be distinct, algebraically independent
linear forms in $R$. The Kronecker--Weierstrass theorem states that $\Theta$ is
equivalent to a block matrix obtained by concatenating finitely many blocks of the above types, uniquely determined up to permutation of blocks.

\medskip

We now specialize this classification to the case of $2\times4$ matrices.

\begin{Proposition}\label[Proposition]{KWform}
	Let $\Theta$ be a $2\times4$ matrix of linear forms in $R$. Then $\Theta$ is equivalent to exactly one of the following matrices.
	\begin{itemize}
		\item[{\rm (i)}] One single block:
		\[
		B_4=
		\begin{bmatrix}
			x_2 & x_3 & x_4 & x_5\\
			x_1 & x_2 & x_3 & x_4
		\end{bmatrix}
		,\quad  D_3=\begin{bmatrix}
			x_1 & x_2 & x_3 & 0\\
			0   & x_1 & x_2 & x_3
		\end{bmatrix}\]
		or \[
		 J_4(\lambda)=\begin{bmatrix}
		x_1 & x_2 & x_3 & x_4\\
		\lambda x_1 & x_1+\lambda x_2 & x_2+\lambda x_3 & x_3+\lambda x_4
		\end{bmatrix}.
		\]
		
		\item[{\rm (ii)}] Two-block decomposition:
    
{\small
\begin{align*}
D_2 \mid B_1 &=
\begin{bmatrix}
x_1 & x_2 & 0 & x_3\\
0   & x_1 & x_2 & x_4
\end{bmatrix},
&
D_2 \mid J_1(\lambda) &=
\begin{bmatrix}
x_1 & x_2 & 0 & x_3\\
0   & x_1 & x_2 & \lambda x_3
\end{bmatrix},
\\[6pt]
J_3(\lambda) \mid B_1 &=
\begin{bmatrix}
x_1 & x_2 & x_3 & x_4\\
\lambda x_1 & x_1+\lambda x_2 & x_2+\lambda x_3 & x_5
\end{bmatrix},
&
B_3 \mid J_1(\mu) &=
\begin{bmatrix}
x_1 & x_2 & x_3 & x_5\\
x_2 & x_3 & x_4 & \mu x_5
\end{bmatrix},
\\[6pt]
B_3 \mid B_1 &=
\begin{bmatrix}
x_1 & x_2 & x_3 & x_5\\
x_2 & x_3 & x_4 & x_6
\end{bmatrix},
&
B_2 \mid B_2 &=
\begin{bmatrix}
x_2 & x_3 & x_5 & x_6\\
x_1 & x_2 & x_4 & x_5
\end{bmatrix},
\\[6pt]
D_1 \mid J_2(\lambda) &=
\begin{bmatrix}
x_1 & 0 & x_2 & x_3\\
0   & x_1 & \lambda x_2 & x_2+\lambda x_3
\end{bmatrix},
&
D_1 \mid B_2 &=
\begin{bmatrix}
x_1 & 0 & x_3 & x_4\\
0   & x_1 & x_2 & x_3
\end{bmatrix},
\\[6pt]
J_2(\lambda) \mid J_2(\mu) &=
\begin{bmatrix}
x_1 & x_2 & x_3 & x_4\\
\lambda x_1 & x_1+\lambda x_2 & \mu x_3 & x_3+\mu x_4
\end{bmatrix},
&
D_1 \mid D_1 &=
\begin{bmatrix}
x_1 & 0 & x_2 & 0\\
0   & x_1 & 0   & x_2
\end{bmatrix},
\\[6pt]
J_3(\lambda) \mid J_1(\mu) &=
\begin{bmatrix}
x_1 & x_2 & x_3 & x_4\\
\lambda x_1 & x_1+\lambda x_2 & x_2+\lambda x_3 & \mu x_4
\end{bmatrix},
&
J_2(\lambda) \mid B_2 &=
\begin{bmatrix}
x_1 & x_2 & x_4 & x_5\\
\lambda x_1 & x_1+\lambda x_2 & x_3 & x_4
\end{bmatrix}.
\end{align*}
}

\item[{\rm (iii)}] Three-block decomposition:
 \[
B_2 \mid B_1 \mid B_1 =
\begin{bmatrix}
x_2 & x_3 & x_5 & x_6\\
x_1 & x_2 & x_4 & x_5
\end{bmatrix},\quad 
J_2(\lambda) \mid B_1 \mid J_1(\mu) =
\begin{bmatrix}
x_1 & x_2 & x_4 & x_5\\
\lambda x_1 & x_1+\lambda x_2 & x_3 & \mu x_5
\end{bmatrix},
\]

\[
B_2 \mid J_1(\mu) \mid J_1(\lambda) =
\begin{bmatrix}
x_2 & x_3 & x_4 & x_5\\
x_1 & x_2 & \mu x_4 & \lambda x_5
\end{bmatrix}.\quad 
D_1 \mid B_1 \mid B_1 =
\begin{bmatrix}
x_1 & 0 & x_3 & x_5\\
0   & x_1 & x_2 & x_4
\end{bmatrix},
\]

\[
D_1 \mid B_1 \mid J_1(\lambda ) =
\begin{bmatrix}
x_1 & 0 & x_3 & x_4\\
0   & x_1 & x_2 & \lambda x_4
\end{bmatrix},\quad 
D_1 \mid J_1(\mu) \mid J_1(\lambda) =
\begin{bmatrix}
x_1 & 0 & x_2 & x_3\\
0   & x_1 & \mu x_2 & \lambda x_3
\end{bmatrix},
\]

\[
J_2(\lambda) \mid B_1 \mid B_1 =
\begin{bmatrix}
x_1 & x_2 & x_4 & x_6\\
\lambda x_1 & x_1+\lambda x_2 & x_3 & x_5
\end{bmatrix},\ \  
B_2 \mid B_1 \mid J_1(\mu) =
\begin{bmatrix}
x_2 & x_3 & x_5 & x_6\\
x_1 & x_2 & x_4 & \mu x_6
\end{bmatrix},
\]

\[
J_2(\lambda) \mid J_1(\mu) \mid J_1(\rho ) =
\begin{bmatrix}
x_1 & x_2 & x_3 & x_4\\
\lambda x_1 & x_1+\lambda x_2 & \mu x_3 & \rho x_4
\end{bmatrix}\]
				
\item[{\rm (iv)}]  Four-block decomposition:
\[
J_1(\lambda_1)\mid J_1(\lambda_2)\mid J_1(\lambda_3)\mid J_1(\lambda_4)=\begin{bmatrix}
x_1&x_2&x_3&x_4\\
\lambda_1x_1&\lambda_2x_2&\lambda_3x_3&\lambda_4x_4
\end{bmatrix},
\]
\[ B_1\mid B_1\mid B_1\mid B_1=
\begin{bmatrix}
x_2&x_4&x_6&x_8\\
x_1&x_3&x_5&x_7
\end{bmatrix}
\]
\end{itemize}
\end{Proposition}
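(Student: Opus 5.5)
The plan is to deduce the list directly from the Kronecker--Weierstrass theorem recalled above, by enumerating every concatenation of blocks that has exactly four columns and then discarding or merging cases using the standing hypotheses of \Cref{sec:hom-struct}. These hypotheses are that $\Theta$ has rank $2$ and that the entries of each row have $\gcd$ equal to $1$.

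The column count is the bookkeeping device. A nilpotent block $D_m$ occupies $m+1$ columns, while $J_m(\lambda)$ and $B_m$ each occupy $m$ columns. The degenerate blocks must be set aside first. The block $D_0$ is a zero column, and it would force the conclusion of \Cref{prop:compressibility}. The block $B_0$ carries a single variable and does not contribute a column of the required shape. For linear entries, a zero column is excluded from the list because the rows are assumed to be genuinely four-column rows of linear forms. We must then list all multisets of blocks whose column sizes sum to $4$, sorted by the number of blocks:
\begin{enumerate}
\item[(1)] a single block of size $4$, namely $D_3$, $J_4(\lambda)$ or $B_4$;
\item[(2)] two blocks of sizes $3+1$ or $2+2$, where the size-$3$ block is $D_2$, $J_3$ or $B_3$, the size-$2$ blocks are $D_1$, $J_2$ or $B_2$, and the size-$1$ block is $J_1$ or $B_1$;
\item[(3)] three blocks of sizes $2+1+1$;
\item[(4)] four blocks of size $1$.
\end{enumerate}
For each pattern I will write the matrix with disjoint sets of variables, renaming them $x_1,x_2,\dots$ consecutively, which produces the explicit matrices of the statement.

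The reduction step removes duplicates and forbidden combinations. First, the action of $\GL_2(k)$ on the rows moves eigenvalues by Möbius transformations, including the eigenvalue $\infty$. This lets me normalize Jordan blocks and identify forms that differ only by a permutation of blocks or a common change of eigenvalue parameters. Second, I will discard the configurations that violate a hypothesis. These are configurations where both rows become proportional, so that the rank is at most $1$; this happens for instance when all blocks are $J_1$ with one common eigenvalue. They also include configurations where a row acquires a common factor, such as two copies of $D_1$ with further $D$-type blocks, or all-$J_1$ forms whose row entries share a factor after normalization. For each surviving multiset, I will check that the resulting matrix has rank $2$ and rows with trivial $\gcd$. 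This confirms that it appears in items (i)--(iv).

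Uniqueness (\emph{exactly one}) follows from the uniqueness part of the Kronecker--Weierstrass theorem, since block types and sizes are invariants up to permutation. The main obstacle I expect is the exhaustive bookkeeping in cases (2) and (3), together with the decision of which combinations are eliminated by the rank and $\gcd$ conditions rather than merged by $\GL_2(k)\times\GL_4(k)$-equivalence. There I would first tabulate all size partitions and block-type assignments, then test each against the two hypotheses one by one, and only afterwards collapse $\GL_2$-orbits of eigenvalues.
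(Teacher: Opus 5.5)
Your strategy is the one the paper relies on: it gives no separate proof and presents the list as a direct specialization of the Kronecker--Weierstrass theorem. The gap is in your closing step. Checking that a surviving matrix has rank $2$ and $\gcd$-one rows does not show that it ``appears in items (i)--(iv)''; that inference is a non sequitur. It actually fails for multisets your own case (4) produces, namely $B_1\mid B_1\mid B_1\mid J_1(\lambda)$, $B_1\mid B_1\mid J_1(\lambda)\mid J_1(\mu)$ and $B_1\mid J_1(\lambda)\mid J_1(\mu)\mid J_1(\rho)$. For example,
\[
\begin{bmatrix}
x_2 & x_4 & x_6 & x_7\\
x_1 & x_3 & x_5 & \lambda x_7
\end{bmatrix}
\]
has rank $2$ and rows with $\gcd$ equal to $1$, and its entries span a $7$-dimensional space of linear forms. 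That dimension is invariant under $\GL_2(k)\times\GL_4(k)$ and linear changes of coordinates. Every matrix displayed in (i)--(iv) has entries spanning a space of dimension at most $6$ or equal to $8$, so this matrix is equivalent to none of them. The other two mixed types are excluded from the list by uniqueness of the block types. Likewise, the matrix displayed as $B_2\mid B_1\mid B_1$ in (iii) is literally the $B_2\mid B_2$ matrix of (ii), so the genuine $B_2\mid B_1\mid B_1$, e.g. $\begin{bmatrix} x_2&x_3&x_5&x_7\\ x_1&x_2&x_4&x_6\end{bmatrix}$, is missing too. No argument can close this step. A faithful enumeration proves a corrected list, and your write-up should say so rather than assert the list as stated.

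Your elimination step is also unsound as formulated. The $\gcd$ condition is not an invariant of equivalence, as the paper itself remarks at the start of \Cref{sec:Bour-of-ideals}. For $\lambda\neq 0$, the row operation $R_2\mapsto R_2-\lambda R_1$ followed by $C_1\mapsto C_1+\lambda C_2$ turns $D_1\mid J_1(\lambda)\mid J_1(\lambda)$ into $D_1\mid J_1(0)\mid J_1(0)$, whose second row $(0,x_1,0,0)$ has all entries divisible by $x_1$. So testing one representative per normal form decides nothing. Your sample eliminations also cannot occur with four columns, since two copies of $D_1$ already use all four. Moreover, the paper's list removes nothing for rank reasons: (iv) allows $\lambda_1=\cdots=\lambda_4$, which has rank one. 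Discarding $D_0$ is not implied by any hypothesis either, since $B_3\mid D_0$ has rank $2$ and $\gcd$-one rows. You must state explicitly that $\Theta$ has no zero column after a change of basis of $R^4$, which by \Cref{prop:compressibility} is equivalent to $e\geq 1$. Finally, Kronecker--Weierstrass uniqueness pins down the block types only. The eigenvalues are determined only up to a simultaneous M\"obius transformation (for instance $J_4(0)$ and $J_4(1)$ are equivalent), so ``exactly one'' must be read modulo that identification.
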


We examine each of the above cases, describing the Bourbaki degree, its minimal resolution, and other discrete invariants.

\begin{Proposition}\label[Proposition]{KWform-BR-resolution}
Let the Kroenecker-Weierstrass form of a $2 \times 4$ linear matrix $\Theta$ be one of the following:
\begin{align*}
&B_4,\,  D_3,\,  J_4(\lambda),\,  J_3(\lambda) \mid B_1, B_3 \mid J_1(\mu),\,  B_3 \mid B_1, B_2 \mid B_2,\, J_2(\lambda) \mid B_2\\
&B_2 \mid B_1 \mid B_1,\,  B_2 \mid B_1 \mid J_1(\lambda),\,  J_2(\lambda) \mid B_1 \mid B_1,\,  B_1 \mid B_1 \mid B_1 \mid B_1,
\end{align*}
or
\begin{align*}
J_3(\lambda) \mid J_1(\mu),\,  J_2(\lambda) \mid J_2(\mu),\,  B_2 \mid J_1(\lambda) \mid J_1(\mu),\,  J_2(\lambda) \mid B_1 \mid J_1(\mu),\,  \ \  & \text{with }\lambda \neq \mu, \\ 
J_2(\lambda) \mid J_1(\mu) \mid J_1(\rho),\,  \ \  & \text{with }\lambda, \mu, \rho \text{ distinct},\\
J_1(\lambda_1) \mid J_1(\lambda_2) \mid J_1(\lambda_3) \mid J_1(\lambda_4) \ \ &  \text{with }\lambda_1, \ldots, \lambda_4 \text{ distinct}.
\end{align*}
Then $\hht I_2(\Theta) =3$, and in particular we have $\Bour(\Theta) = 3$, where the minimal free resolution for $\Theta$ is of Buchsbaum--Rim form, namely
$$
0 \to R^2(-3) \to R^4(-2) \to R^4 \xrightarrow{\Theta} R^2(1).
$$
\end{Proposition}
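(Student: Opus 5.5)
The argument reduces the whole statement to the single claim $\hht I_2(\Theta)=3$ for each listed normal form. Once this holds, $\grade I_2(\Theta)=3$ because $R$ is Cohen--Macaulay, which is the maximal possible grade. The Buchsbaum--Rim complex \eqref{BRCmain} is then acyclic. With $d_1=d_2=1$, every entry of $\psi$ and $\phi$ is a $2$-minor or a linear form, so none is a unit and the complex is minimal. This gives the displayed shape $0\to R^2(-3)\to R^4(-2)\to R^4\to R^2(1)$ after untwisting. Since $\dim \Q=n-3$, Proposition~\ref{BourM-q} yields $\Bour(\Theta)=\fq_\Theta=1+1+1=3$. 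Proposition~\ref{dimQ-n-3-BR-resolution} shows $\Theta$ is not free, so the Bourbaki degree is indeed defined via Theorem~\ref{Bourideal}(b). By Lemma~\ref{Lem:ext-of-var}, heights, Hilbert coefficients and resolutions are unchanged when we pass to more variables, so it suffices to work in the ring generated by the variables actually appearing in each block form.

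To prove $\hht I_2(\Theta)=3$ I would use the geometric reformulation. $V(I_2(\Theta))\subset \mathbb{A}^N$ is the locus where the two rows become proportional. Since $\hht I_2(\Theta)\le 3$ always, the task is to show this locus has codimension $\ge 3$. I would stratify it according to whether the first row vanishes or the second row equals $t$ times the first for some $t$, with $t=\infty$ handled symmetrically. Then I would bound the dimension of each stratum block by block.

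\begin{itemize}
\item \textbf{Scroll block $B_m$.} Proportionality of the two rows forces a rational normal curve condition. The $2\times m$ minors of $B_m$ define a locus of codimension $m$ in its $m+1$ variables; for $B_1$ there is no condition inside the block itself.
\item \textbf{Jordan block $J_m(\lambda)$.} The proportionality relation $\mathrm{row}_2=t\cdot\mathrm{row}_1$ reads $(\lambda-t)y_1=0$, $y_{i-1}+(\lambda-t)y_i=0$, and so on. For $t\neq\lambda$ this forces the whole block to vanish, imposing $m$ conditions. For $t=\lambda$ it forces $y_1=\cdots=y_{m-1}=0$, imposing $m-1$ conditions.
\item \textbf{Nilpotent block $D_m$.} This behaves like a Jordan block at $t=\infty$ (or at $0$, depending on convention).
\end{itemize}

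Adding the codimensions over the blocks and adding $1$ for the freedom in $t$ gives the codimension of each stratum. For the listed forms the minimum over strata should be $\ge 3$, since the total size is $4$ and no eigenvalue carries two blocks. For instance, in $J_2(\lambda)\mid J_2(\mu)$ with $\lambda\neq\mu$, the stratum $t=\lambda$ has codimension $1+2-1=2$ in the $x$-variables, and we must then subtract $1$ for fixing $t$. This sign convention needs care: with $t$ fixed there is no extra dimension, whereas with $t$ generic the block count must exceed $4$.

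The main obstacle is getting this dimension count uniformly correct. The real content is that the codimension drops to $2$ exactly when two blocks share an eigenvalue or when a $D$-block is present. This is why $D_2\mid B_1$, $D_1\mid\cdots$, and the cases with repeated eigenvalues are excluded from the list. If the uniform count becomes too error-prone, I would fall back on a case-by-case treatment. In each case I would exhibit three $2$-minors forming a regular sequence; for example, $B_4$ and $D_3$ are classical, since the minors of $B_4$ define the rational normal curve and those of $D_3$ define $(x_1,x_2,x_3)^2$-primary data of height $3$. For the remaining cases I would compute a minimal prime decomposition and check that every component has height $3$.
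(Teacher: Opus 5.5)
Your reduction is exactly the paper's. Once $\hht I_2(\Theta)=3$, the Buchsbaum--Rim complex is acyclic and minimal (all entries lie in $\fm$). Proposition~\ref{dimQ-n-3-BR-resolution} then gives non-freeness, and Proposition~\ref{BourM-q} gives $\Bour(\Theta)=\fq_\Theta=3$. The paper does not compute the height itself; it quotes the block-by-block height formula of \cite[Proposition 2.2]{Abbas-AluffiTFIdeals}. Your replacement for that citation is where the proposal has a gap: the count, as you set it up, is wrong in several concrete places, so $\hht I_2(\Theta)=3$ is not established.

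(i) For a \emph{fixed} ratio, $B_m$ imposes $m$ independent linear conditions, at every ratio; in particular $B_1$ imposes one ($z_0=tz_1$), not zero. What you describe, the zero locus of the block's own $2$-minors, is a rational normal cone of codimension $m-1$ (not $m$). It is the wrong quantity anyway, since the ratio must be common to all blocks.

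(ii) Letting $t$ vary can \emph{lower} the codimension by at most one (you wrote ``adding $1$''), and only on the family of generic ratios; at a fixed special ratio nothing is subtracted. In your own example $J_2(\lambda)\mid J_2(\mu)$, the fibre at $t=\lambda$ is $\{x_1=x_3=x_4=0\}$, of codimension $1+2=3$. Your ``$1+2-1=2$, then subtract $1$'' would contradict the statement.

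(iii) $D_m$ is not a Jordan block at one eigenvalue. For every ratio it forces all $m$ of its variables to vanish, so it gives only $m$ conditions on $m+1$ columns at \emph{every} ratio. Hence your ``real content'' claim is false as stated: $D_3$ contains a nilpotent block, yet it is on the list with height $3$, as you yourself note later.

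(iv) The fallback of three $2$-minors forming a regular sequence fails for $J_1(\lambda_1)\mid\cdots\mid J_1(\lambda_4)$. The minors are $(\lambda_j-\lambda_i)x_ix_j$, and any three of these monomials lie in an ideal generated by two variables, since three edges of $K_4$ always admit a vertex cover of size two. You would need general linear combinations of minors, or the primary decomposition.

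The idea is sound and easy to repair. Write $V(I_2(\Theta))=\bigcup_{[\alpha:\beta]\in\PP^1}L_{[\alpha:\beta]}$, where $L_{[\alpha:\beta]}$ is the linear space $\alpha\,\mathrm{row}_1+\beta\,\mathrm{row}_2=0$. Its codimension is additive over blocks, with these contributions:
\begin{itemize}
\item $B_m$ contributes $m$ at every ratio;
\item $J_m(\lambda)$ contributes $m$, except $m-1$ at the single ratio where $\mathrm{row}_2=\lambda\,\mathrm{row}_1$;
\item $D_m$ contributes $m$ at every ratio.
\end{itemize}
For every listed form other than $D_3$, there is no nilpotent block and each eigenvalue carries at most one Jordan block. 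So $\codim L_{[\alpha:\beta]}=4$ off finitely many ratios and is $\ge 3$ at those. The union of the generic fibres is the image of a $1$-parameter family, so it has codimension $\ge 4-1=3$; hence $\codim V(I_2(\Theta))\ge 3$. For $D_3$, every fibre equals $V(x_1,x_2,x_3)$, so the same conclusion holds. Combined with $\hht I_2(\Theta)\le 3$, this yields height $3$ without the citation. The same count shows exactly which forms fall below height $3$: those with a repeated eigenvalue, or with a nilpotent block other than $D_3$. Done this way, your route is a genuinely self-contained alternative to the paper's appeal to \cite{Abbas-AluffiTFIdeals}.
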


\begin{proof}
The claim about the height of the ideal $I_2(\Theta)$ follows from the general formula in \cite[Proposition 2.2]{Abbas-AluffiTFIdeals}, which describes this height in terms of the types of each block involved in the Kronecker-Weierstrass decomposition. Then, the minimal free resolution is of Buchsbaum--Rim type since $\dim \Q \leq n-3$, from \Cref{dimQ-n-3-BR-resolution}, and moreover
$$
\Bour(\Theta) = \fq_{\Theta} = d_1^2 + d_2^2 + d_1 d_2 =  3
$$
follows from \Cref{BourM-q}.
\end{proof}

\begin{Proposition}\label[Proposition]{KWform-free}
Let the Kroenecker-Weierstrass form of a $2 \times 4$ linear matrix $\Theta$ be one of the following:
\begin{align*}
D_1 \mid J_2(\lambda), D_1 \mid B_2, J_2(\lambda) \mid J_2(\lambda), D_1 \mid D_1, D_1 \mid B_1 \mid B_1, D_1 \mid B_1 \mid J_1(\lambda),
\end{align*}
or
\begin{align*}
D_1 \mid J_1(\mu) \mid J_1(\lambda), \ \  & \text{with }\lambda \neq \mu, \lambda \neq 0 \text{ or } \mu \neq 0, \\ 
J_1(\lambda) \mid J_1(\lambda) \mid J_1(\mu) \mid J_1(\mu), \ \  & \text{with }\lambda \neq \mu.
\end{align*}
Then $\Theta$ is free, so that $\syz(\Theta) \simeq R^2(-1)$.
\end{Proposition}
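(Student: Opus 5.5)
The approach is to exhibit, for each listed normal form, two explicit linear syzygies $\nu_1,\nu_2\in\syz(\Theta)_1$ and show that they generate $\syz(\Theta)$. To certify this, we use the Hilbert--Burch criterion. If the $2\times 2$ minors of the $4\times 2$ matrix $[\nu_1\ \nu_2]$ are, up to a common nonzero scalar, the complementary minors $\pm\Delta_{kl}$ of $\Theta$, and if $I_2(\Theta)$ has height exactly $2$, then the complex
$$0\to R(-1)^2\xrightarrow{[\nu_1\,\nu_2]}R^4\xrightarrow{\Theta}R\oplus R\ (\text{up to twist})$$
is exact by the Buchsbaum--Eisenbud acyclicity criterion. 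Indeed, the ranks add up correctly ($2+2=4$), and $I_2(\nu)=I_2(\Theta)$ has grade $\geq 2$, while $I_4$ of the outer map is trivially fine. Hence $\syz(\Theta)$ is the image of $[\nu_1\ \nu_2]$, so $\syz(\Theta)\simeq R(-1)^2$, and in particular $e=1$.

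The steps are as follows.

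First, using \cite[Proposition 2.2]{Abbas-AluffiTFIdeals}, as in the proof of \Cref{KWform-BR-resolution}, record that $\hht I_2(\Theta)=2$ for each form in the list. Alternatively, compute it directly: for $D_1\mid D_1$ one gets $I_2=(x_1,x_2)^2$; for $J_1(\lambda)\mid J_1(\lambda)\mid J_1(\mu)\mid J_1(\mu)$ one gets $I_2=(x_1,x_2)(x_3,x_4)$ (using $\lambda\ne\mu$), which has height $2$. The height cannot be $3$, since otherwise \Cref{dimQ-n-3-BR-resolution} would forbid freeness.

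Second, write down the syzygies block by block. A $D_1$ block $\begin{bmatrix}x_1&0\\0&x_1\end{bmatrix}$ combined with any other column $(a,b)^T$ gives the syzygy $(-a,-b,x_1,\dots)$ placed appropriately. For instance, for $D_1\mid B_2$ the two syzygies are $(x_3,x_2,-x_1,0)^T$ and $(x_4,x_3,0,-x_1)^T$. For $D_1\mid D_1$ they are $(x_2,0,-x_1,0)^T$ and $(0,x_2,0,-x_1)^T$. For $D_1\mid J_1(\mu)\mid J_1(\lambda)$ they are $(x_2,\mu x_2,-x_1,0)^T$ and $(x_3,\lambda x_3,0,-x_1)^T$. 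For $J_1(\lambda)^2\mid J_1(\mu)^2$, the columns come in proportional pairs, giving $(x_2,-x_1,0,0)^T$ and $(0,0,x_4,-x_3)^T$. For $J_2(\lambda)\mid J_2(\lambda)$, row-reducing by $\lambda$ reduces to $\begin{bmatrix}x_1&x_2&x_3&x_4\\0&x_1&0&x_3\end{bmatrix}$, which yields $(x_3,0,-x_1,0)^T$ and $(x_4,x_3,-x_2,-x_1)^T$.

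Third, check that these two syzygies are independent and that their $2$-minors reproduce $I_2(\Theta)$, so that the acyclicity criterion applies. Since they have degree $1$ and $\Theta$ has no zero column, Proposition~\ref{prop:compressibility} gives $e=1$.

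The main obstacle is bookkeeping the caveats in the second list: the conditions on $\lambda,\mu$ ensure that the height is $2$ rather than $3$, and they also govern the degenerate subcases, for example $\lambda=\mu$ or $\lambda=\mu=0$ in $D_1\mid J_1\mid J_1$. In the degenerate subcases I would first apply a $\GL_2$ row operation to normalize the eigenvalues, e.g.\ to send an eigenvalue to $0$ or to swap the rows. I would then verify the minors identity case by case by direct expansion, which is routine.
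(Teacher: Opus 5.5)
Your proposal follows the paper's route of exhibiting two explicit linear syzygies and showing they form a basis of $\syz(\Theta)$. It also supplies the generation argument that the paper leaves to ``clearly'' and to Macaulay2, via the Buchsbaum--Eisenbud criterion: since $\Theta\nu=0$ and $\rank\Theta=2$, it suffices that $\grade I_2([\nu_1\ \nu_2])\geq 2$, and your syzygies satisfy this in every listed case.

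One correction to your last paragraph: the hypothesis $\lambda\neq\mu$ does not keep $I_2(\Theta)$ from rising to height $3$ (that never happens for these forms). It keeps $I_2(\Theta)$ from dropping to height $1$, as in $D_1\mid J_1(\lambda)\mid J_1(\lambda)$ where $I_2(\Theta)=x_1(x_1,x_2,x_3)$, or keeps the rows of $J_1(\lambda)\mid J_1(\lambda)\mid J_1(\mu)\mid J_1(\mu)$ from becoming proportional; the ``degenerate subcases'' you plan to normalize are excluded by hypothesis.
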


\begin{proof}
To show $\syz(\Theta) \simeq R^2(-1)$, it suffices to exhibit two linearly independent syzygies that generate the module and have no relations. For example, if 
$$
\Theta = D_1 \mid J_2(\lambda) = \begin{bmatrix}
x_1 & 0 & x_2 & x_3\\
0   & x_1 & \lambda x_2 & x_2+\lambda x_3
\end{bmatrix},
$$
then the matrix
$$
\begin{bmatrix}
-x_2 & -x_3\\
-\lambda x_2 & -x_2 - \lambda x_3\\
x_1 & 0\\
0 & x_1
\end{bmatrix}
$$
clearly defines an isomorphism $\syz(\Theta) \simeq R^2(-1)$. For the other cases, one can build such explicit syzygies using an algebra computer software, for example, Macaulay2 (\cite{M2}).
\end{proof}

\begin{Proposition}\label[Proposition]{KWform-nearlyfree-codimQ-2}
Let the Kroenecker-Weierstrass form of a $2 \times 4$ linear matrix $\Theta$ be one of the following:
\begin{align*}
D_2 \mid J_1(\lambda), J_3(\lambda) \mid J_1(\lambda), B_2 \mid J_1(0) \mid J_1(0), J_2(\lambda) \mid B_1 \mid J_1(\lambda),
\end{align*}
or
\begin{align*}
J_2(\lambda) \mid J_1(\mu) \mid J_1(\mu), J_2(\lambda) \mid J_1(\lambda) \mid J_1(\lambda), J_1(\lambda) \mid J_1(\lambda) \mid J_1(\mu) \mid J_1(\rho), 
\end{align*}
with $\lambda \neq \mu, \rho \neq \lambda, \mu$.
Then $\Theta$ is nearly free, with $\codim(\Q) = 2$, so that $e_0(\Q) = 0$, $e_1(\Q) = 1$, the degree of a minimal syzygy is $e = 1$ and $\Bour(\Theta) = 1$. The minimal free resolution for $\Theta$ is of the form
$$
0 \to R(-3) \to R^2(-2) \oplus R(-1) \to R^4 \xrightarrow{\Theta} R^2(1).
$$
\end{Proposition}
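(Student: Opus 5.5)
Since every normal form in the list is a block concatenation in disjoint variables, I will reduce to a uniform template. In each case there is one syzygy of degree $1$ that comes from a coincidence of eigenvalues, or from a zero eigenvalue against a scroll/nilpotent block. For example, in $J_3(\lambda)\mid J_1(\lambda)$, the last column $(x_4,\lambda x_4)$ is a scalar multiple of $(1,\lambda)^T$, and so is the first column $(x_1,\lambda x_1)$. This gives the linear syzygy $\nu=(x_4,0,0,-x_1)$. In $D_2\mid J_1(\lambda)$ the first row is $(x_1,x_2,0,x_3)$; a change of basis on the target turns the second row into a combination involving $x_3$, and one again finds a linear syzygy. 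The first step is to write down such a $\nu$ of degree $1$ explicitly in each case. Then $e\le 1$. Since no column of a normal form can be killed by a change of coordinates (the columns are linearly independent over $k$), \Cref{prop:compressibility} gives $e\ne 0$, so $e=1$.

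The second step computes $I_2(\Theta)$ and its height-two component. Using \cite[Proposition 2.2]{Abbas-AluffiTFIdeals}, exactly as in \Cref{KWform-BR-resolution}, the height of $I_2(\Theta)$ in these cases is $2$, so $e_0(\Q)=0$ and $\codim(\Q)=2$. To get $e_1(\Q)=1$ I will use the associativity formula. The coincident eigenvalue forces the $2$-minors of two proportional columns to vanish. For example, in $J_3(\lambda)\mid J_1(\lambda)$, after replacing the second row by (second row $-\lambda\cdot$ first row) the matrix becomes $\begin{bmatrix} x_1&x_2&x_3&x_4\\ 0&x_1&x_2&0\end{bmatrix}$. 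Its minors all lie in $(x_1,x_2)$, and $(x_1,x_2)$ is the unique height-two minimal prime, with $\Q$ of length $1$ there. Concretely, localizing at $\fp=(x_1,x_2)$, the entry $x_3$ becomes a unit, so $\Q_\fp$ is cyclic and equal to $R_\fp/\fp R_\fp$. The same pattern should hold in each case: one linear prime of height two, with length one. Then \eqref{MainBourDegFormual} with $e_0(\Q)=0$, $e=1$, $d=2$, $\fq_\Theta=3$ gives
\[
\Bour(\Theta)=1\cdot(1-2)+3-e_1(\Q)=2-e_1(\Q).
\]
This equals $1$ exactly when $e_1(\Q)=1$. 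Note that the sign convention of \Cref{Bourideal}(b) must be respected here.

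For the resolution, once $\Bour(\Theta)=1$ and $\Theta$ is not free, \Cref{prop:nearly-free-3-syzygy}(a) with $s=e-d+e_0(\Q)=-1$ gives
\[
0\to R(-3)\to R(-2)^2\oplus R(-1)\to\syz(\Theta)\to 0.
\]
Splicing with $R^4\to R^2(1)$, after the twist normalizing the target, gives the stated resolution of $\Q$. Non-freeness also follows directly: $e_1(\Q)=1$ is incompatible with the free identity in the Remark after \Cref{lem:line-by-line-comparison}, or alternatively with \Cref{Bourideal}(a), which would force $\Bour=0$.

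The main obstacle is verifying $e_1(\Q)=1$ uniformly. This requires checking that there is exactly one height-two minimal prime, that it is linear, and that the length is $1$. The delicate cases are the ones with several coinciding eigenvalues, such as $J_1(\lambda)\mid J_1(\lambda)\mid J_1(\mu)\mid J_1(\rho)$ and $J_2(\lambda)\mid J_1(\lambda)\mid J_1(\lambda)$. For these I would first apply the row operation that makes the $\lambda$-eigencolumns lie in the first row only, and then read off the minors directly. If the length computation becomes unwieldy, I would instead find generators of $\syz(\Theta)$ directly (by hand, or with Macaulay2 as in \Cref{KWform-free}) and apply \Cref{Thm:Bourbaki-resolutions}(b) to identify $I_\nu$ as generated by two linear forms.
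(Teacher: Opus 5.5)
Your route is the paper's. You exhibit a linear syzygy, find the unique height-two minimal prime of $I_2(\Theta)$, show that $\Q$ has length one there, and insert $e=1$, $e_0(\Q)=0$, $e_1(\Q)=1$ into the Bourbaki formula. You then read off the resolution from \Cref{prop:nearly-free-3-syzygy}(a) with $s=-1$. Your side remarks fill in points the paper leaves implicit: $e\neq 0$ because the columns are $k$-linearly independent, and $\Theta$ is not free because a free $\Theta$ would force $e_1(\Q)=2$.

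The gap is your blanket claim that $\hht I_2(\Theta)=2$ ``in these cases''. It is false for the literally listed form $J_2(\lambda)\mid J_1(\lambda)\mid J_1(\lambda)$, which is one of the cases you yourself flag as delicate. Performing your own row operation (second row minus $\lambda$ times the first) gives
\[
\Theta\sim\begin{bmatrix} x_1&x_2&x_3&x_4\\ 0&x_1&0&0\end{bmatrix},
\qquad I_2(\Theta)=x_1\,(x_1,x_3,x_4),
\]
so $\codim(\Q)=1$. Localizing at $(x_1)$, where $x_3$ is a unit, gives $e_0(\Q)=1$. Moreover $\syz(\Theta)\cong\syz(x_1,x_3,x_4)$ is the Koszul module, with resolution $0\to R(-2)\to R(-1)^3\to\syz(\Theta)\to 0$. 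So this form is still nearly free, but it is of the codimension-one type of \Cref{KWform-nearlyfree-codimQ-1}. The assertions $e_0(\Q)=0$, $e_1(\Q)=1$ and the stated resolution all fail for it, and no citation of a height formula can rescue them.

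The paper's proof actually treats $J_2(\lambda)\mid J_1(\lambda)\mid J_1(\mu)$ with $\lambda\neq\mu$ in this slot, so the list entry is a misprint. You should make that substitution rather than certify height two by citation. For the corrected form your template goes through:
\begin{itemize}
\item the row operation yields second row $(0,x_1,0,(\mu-\lambda)x_4)$;
\item the only height-two minimal prime is $\fp=(x_1,x_4)$;
\item $x_3$ is a unit at $\fp$, so $\Q_\fp\cong R_\fp/\fp R_\fp$ has length one;
\item $\nu=(-x_3,0,x_1,0)^T$ is a linear syzygy.
\end{itemize}
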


\begin{proof}
For each matrix listed above, we will show that $\codim(\Q) = 2$ (so $e_0(\Q) = 0$), the existence of a syzygy of degree $1$ (so $e=1$) and that $e_1(\Q) = 1$, using the associativity formula. Then, the claim follows from the Bourbaki degree formula:
$$
\Bour(\Theta) = (e-d)(e) + \fq_{\Theta} - e_1(\Q) = 1. 
$$
In particular, if $\nu \in \syz(\Theta)_1$ is a syzygy of minimum degree, $\deg(R/I_\nu) = 1$ and we obtain the claimed resolution using \Cref{prop:nearly-free-3-syzygy}, since $s = e-d+e_0(\Q) = -1$. 

Let $\Theta = D_2 \mid J_1(\lambda) = \begin{bmatrix}
x_1 & x_2 & 0 & x_3\\
0   & x_1 & x_2 & \lambda x_3
\end{bmatrix}$. In this case,
$$
I_2(\Theta) = (x_1^2, x_2^2, x_1 x_2, \lambda x_1 x_3, x_3(\lambda x_2 - x_1), - x_2 x_3),
$$
and the only prime ideals in its primary decomposition are $\fp = (x_1, x_2)$ and $(x_1, x_2, x_3)$, so $\codim(\Q) = 2$. The vector
$\nu = (-x_3, -\lambda x_3,-\lambda^2 x_3, x_1 + \lambda x_2)^T$ is a syzygy of degree $1$ for $\Theta$. 

From the associativity formula, we may compute
$$
e_1(\Q) = \length(\Q_{\fp}) \cdot \deg(R/\fp).
$$
Since $R/\fp \simeq k[x_3, \ldots, x_n]$, $\deg(R/\fp) = 1$. Localizing the sequence defining $\Q_{\fp}$ we obtain
$$
R_{\fp}^4 \xrightarrow{\Theta_{\fp}} R_{\fp}^2 \to \Q_{\fp} \to 0,
$$
where we may write
$$
\Theta_{\fp} = \begin{bmatrix}
x_1 & x_2 & 0 & x_3\\
0 & x_1 - \lambda x_2 & 0 & 0
\end{bmatrix}.
$$
The scaled fourth column vector $ue_4$ in $R_{\fp}^4$ goes to $(1,0)$, where $u = 1/x_3 \in R_{\fp}$. Writing $R_{\fp}^2 = (1,0)\cdot R_{\fp} \oplus (0,1) \cdot R_{\fp}$, we may compute the cokernel using the second row of the matrix, by
\begin{align*}
\cok(\Theta_{\fp}) \simeq R_{\fp}/( x_1, x_2) \simeq R_{\fp}/\fp_{\fp} \simeq k
\end{align*}
and thus $\length(\Q_{\fp}) = 1$, concluding $e_1(\Q) = 1$.

For $\Theta = J_3(\lambda) \mid J_1(\lambda) = \begin{bmatrix}
x_1 & x_2 & x_3 & x_4\\
\lambda x_1 & x_1+\lambda x_2 & x_2+\lambda x_3 & \lambda x_4
\end{bmatrix}$, the ideal of minors is given by
$$
I_2(\Theta) = (x_1^2, x_1 x_2, x_2^2 - x_1 x_3, - x_1 x_4, - x_2 x_4),
$$
and the prime ideals in its primary decomposition are $\fp = (x_1, x_2)$ and $(x_1, x_2, x_4)$, so $\codim(\Q) = 2$. The vector $\nu = (-x_4,0,0,x_1)^T$ is a syzygy of degree $1$ for $\Theta$. From the associativity formula, we may compute
$$
e_1(\Q) = \length(\Q_{\fp}) \cdot \deg(R/\fp).
$$
Since $R/\fp \simeq k[x_3, \ldots, x_n]$, we get $\deg(R/\fp) = 1$. On the other hand, localizing the sequence defining $\Q$ at $\fp$, we get
$$
R_{\fp}^4 \xrightarrow{\Theta_{\fp}} R_{\fp}^2 \to \Q_{\fp} \to 0,
$$
where we may rewrite, if $\lambda \neq 0$,
$$
\Theta_{\fp} = \begin{bmatrix}
x_1 & x_2 & x_3 & x_4\\
x_1 & \lambda^{-1}x_1 + x_2 & \lambda^{-1}x_2 + x_3 & x_4
\end{bmatrix} \sim \begin{bmatrix}
x_1 & x_2 & x_3 & x_4\\
0 & \lambda^{-1}x_1 & \lambda^{-1}x_2 & 0
\end{bmatrix}.
$$
Since $x_4 \in R_{\fp}$ is invertible, the matrix $\Theta_{\fp}$ sends the scaled fourth basis vector $ue_4$ from $R_{\fp}^4$ to $(1,0)$ in $R_{\fp}^2$, where $u = 1/x_4$, so that the remaining part is $(0,1) \cdot R_{\fp}$, and thus
$$
\cok(\Theta_{\fp}) \simeq R_{\fp}/( \lambda^{-1} x_1, \lambda^{-1}x_2 ) \simeq R_{\fp}/\fp_{\fp} \simeq k,
$$
concluding that $\length(\Q_{\fp}) = 1$ and $e_1(\Q) = 1$. On the other hand, if $\lambda = 0$, then 
$$
\Theta_{\fp} = \begin{bmatrix}
x_1 & x_2 & x_3 & x_4\\
0   & x_1 & x_2 & 0
\end{bmatrix}
$$
and using the same reasoning, we obtain
$$
\cok(\Theta_{\fp}) \simeq R_{\fp}/(x_1, x_2) \simeq R_{\fp}/\fp_{\fp} \simeq k,
$$
and therefore $e_1(\Q) = 1$.

Let $\Theta = B_2 \mid J_1(0) \mid J_1(0) = \begin{bmatrix}
x_2 & x_3 & x_4 & x_5\\
x_1 & x_2 & 0   & 0
\end{bmatrix}$. In this case, the ideal of minors is
$$
I_2(\Theta) = (x_2^2 - x_1 x_3, -x_1 x_4, - x_2 x_4, - x_1 x_5, - x_2 x_5),
$$
and the primes in its primary decomposition are $\fp = (x_1, x_2)$ and $(x_4, x_5, x_1^2 - x_1 x_3)$, so $\codim(\Q) = 2$. The vector $\nu = (0, 0, -x_5, x_4)^T$ is a syzygy of degree $1$ for $\Theta$.

From the associativity formula, we may compute 
$$
e_1(\Q) = \length(\Q_{\fp}) \cdot \deg(R/\fp).
$$
Since $R/\fp \simeq k[x_3, \ldots, x_n]$, $\deg(R/\fp)=1$. Now, localizing the sequence defining $\Q$ at $\fp$, we obtain
$$
R_{\fp}^4 \xrightarrow{\Theta_{\fp}} R_{\fp}^2 \to \Q_{\fp} \to 0,
$$
with
$$
\Theta_{\fp} = \begin{bmatrix}
x_2 & x_3 & x_4 & x_5\\
x_1 & x_2 & 0   & 0
\end{bmatrix}.
$$
Thus, the matrix $\Theta_{\fp}$ sends the scaled fourth basis vector $ue_4$ by $u = 1/x_5$ to $(1,0)$ inside $R_{\fp}^2$ and, since $R_{\fp}^2 = (1,0) \cdot R_{\fp} \oplus (0,1) \cdot R_{\fp}$, the cokernel of $\Theta_{\fp}$ can be computed by the quotient of the second row in the remaining matrix:
$$
\cok(\Theta_{\fp}) \simeq R_{\fp}/(x_1, x_2) = R_{\fp}/\fp_{\fp} \simeq k,
$$
and we conclude $\length(\Q_{\fp}) = 1$ and $e_1(\Q) = 1$.

Let $\Theta = J_2(\lambda) \mid B_1 \mid J_1(\lambda)= \begin{bmatrix}
x_1 & x_2 & x_4 & x_5\\
\lambda x_1 & x_1+\lambda x_2 & x_3 & \lambda x_5
\end{bmatrix}$. In this case, the ideal of minors is
$$
I_2(\Theta) = (x_1^2, x_1 (x_3 - \lambda x_4), x_5(\lambda x_4 - x_3), x_2 x_3 - x_4(x_1 + \lambda x_2), -x_1 x_2),
$$
and the primes in its primary decomposition are $\fp = (x_3 - \lambda x_4)$, $(x_1, x_2, x_5)$ and $(x_1, x_3 - \lambda x_4, x_5)$, so $\codim(\Q) = 2$. The vector $\nu = (0, 0, -x_5, x_4)^T$ is a syzygy of degree $1$ for $\Theta$.

From the associativity formula, we may compute $
e_1(\Q) = \length(\Q_{\fp}) \cdot \deg(R/\fp)$. Since $R/\fp \simeq k[x_2, x_4, x_5, \ldots, x_n]$, we have $\deg(R/\fp) = 1$. Localizing the sequence defining $\Q$ at $\fp$, we may write
$$
\Theta_{\fp} = \begin{bmatrix}
x_1 & x_2 & x_4               & x_5\\
0   & x_1 & x_3 - \lambda x_4 & 0 
\end{bmatrix}.
$$
The matrix $\Theta_{\fp}$ sends the scaled fourth basis vector $u e_4$ to $(1,0)$ in $R_{\fp}^2$, where $u = 1/x_5 \in R_{\fp}$. Since $R_{\fp}^2 = (1,0) \cdot R_{\fp} \oplus (0,1) \cdot R_{\fp}$, we may compute the cokernel of $\Theta_{\fp}$ by considering the second row of the remaining matrix, which gives
$$
\cok(\Theta_{\fp}) \simeq R_{\fp}/(x_1, x_3 - \lambda x_4) = R_{\fp}/\fp_{\fp} \simeq k,
$$
and thus $\length(\Q_{\fp}) = 1$ and $e_1(\Q) = 1$.

Let $\Theta = J_2(\lambda) \mid J_1(\mu) \mid J_1(\mu) = \begin{bmatrix}
x_1 & x_2 & x_3 & x_4\\
\lambda x_1 & x_1+\lambda x_2 & \mu x_3 & \mu x_4
\end{bmatrix}$, where $\lambda \neq \mu$. Here, the ideal of minors is given by
$$
I_2(\Theta) = (x_1^2, (\mu - \lambda)x_1 x_3, (\mu - \lambda)x_1 x_4, (\mu - \lambda)x_2 x_3 - x_1 x_3, (\mu - \lambda)x_2 x_4 - x_1 x_4),
$$
and the prime ideals in its primary decomposition are $\fp = (x_1, x_2)$ and $(x_1, x_2, x_4)$, hence $\codim(\Q) = 2$. The vector $\nu = (0, 0, -x_4, x_3)^T$ is a syzygy of degree $1$ for $\Theta$.

From the associativity formula, we may compute 
$$
e_1(\Q) = \length(\Q_{\fp}) \cdot \deg(R/\fp).
$$
Since $R/\fp \simeq k[x_3, \ldots, x_n]$, $\deg(R/\fp) = 1$. Localizing the sequence defining $\Q$ at the prime ideal $\fp$, we obtain
$$
R_{\fp}^4 \xrightarrow{\Theta_{\fp}} R_{\fp}^2 \to \Q_{\fp} \to 0,
$$
where we may write
$$
\Theta_{\fp} = \begin{bmatrix}
x_1 & x_2 & x_3 & x_4\\
(\lambda - \mu)x_1 & x_1 + (\lambda - \mu)x_2 & 0 & 0
\end{bmatrix}.
$$
We note that $\Theta_{\fp}$ sends the scaled fourth basis vector $u e_4$ to $(1,0)$ in $R_{\fp}^2 = (1,0) \cdot R_{\fp} \oplus (0,1) \cdot R_{\fp}$, where $u = 1/x_4 \in R_{\fp}$. Thus, we may compute the cokernel of $\Theta_{\fp}$ by considering the second row of the remaining matrix:
$$
\cok(\Theta_{\fp}) \simeq R_{\fp}/((\lambda- \mu)x_1, x_1 + (\lambda - \mu)x_2) \simeq R_{\fp}/(x_1, x_2) = R_{\fp}/\fp_{\fp} \simeq k,
$$
since $\lambda \neq \mu$. Therefore, $\length(\Q_{\fp}) = 1$ and $e_1(\Q) = 1$.

Let $\Theta = J_2(\lambda) \mid J_1(\lambda) \mid J_1(\mu) = \begin{bmatrix}
x_1 & x_2 & x_3 & x_4\\
\lambda x_1 & x_1+\lambda x_2 & \lambda x_3 & \mu x_4
\end{bmatrix}$, where $\lambda \neq \mu$. Here, the ideal of minors is given by
$$
I_2(\Theta) = (x_1^2, (\mu - \lambda)x_1 x_4, - x_1 x_3, (\mu - \lambda)x_2 x_4 - x_1 x_4, (\mu - \lambda)x_3 x_4),
$$
where the primes in the primary decomposition are $\fp = (x_1, x_4)$, $(x_1, x_2, x_3)$ and $(x_1, x_3, x_4)$, hence $\codim(\Q) = 2$. The vector $\nu = (-x_3, 0, x_1, 0)^T$ is a syzygy of degree $1$ for $\Theta$.

From the associativity formula, we may compute 
$$
e_1(\Q) = \length(\Q_{\fp}) \cdot \deg(R/\fp).
$$
Note that $\deg(R/\fp) = 1$ since $R/\fp \simeq k[x_2, x_3, x_5, \ldots, x_n]$. On the other hand, localizing the sequence of $\Q$ at $\fp$
$$
R_{\fp}^4 \xrightarrow{\Theta_{\fp}} R_{\fp}^2 \to \Q_{\fp} \to 0
$$
we may write the matrix as
$$
\Theta_{\fp} = \begin{bmatrix}
x_1 & x_2 & x_3 & x_4\\
0   & x_1 & 0   & (\mu - \lambda)x_4
\end{bmatrix},
$$
and seeing that $\Theta_{\fp}$ sends the scaled third basis vector $u e_3$ to $(1,0) \in R_{\fp}^2$, with $u = 1/x_3 \in R_{\fp}$, we may compute the cokernel from the second row in the remaining matrix:
$$
\cok(\Theta_{\fp}) \simeq R_{\fp}/(x_1, (\mu-\lambda)x_4) \simeq R_{\fp}/\fp_{\fp} \simeq k,
$$
since $\mu \neq \lambda$, and thus $\length(\Q_{\fp}) = 1$ and $e_1(\Q) = 1$.

Let $\Theta = J_1(\lambda) \mid J_1(\lambda) \mid J_1(\mu) \mid J_1(\rho) = \begin{bmatrix}
x_1 & x_2 & x_3 & x_4\\
\lambda x_1 & \lambda x_2 & \mu x_3 & \rho x_4
\end{bmatrix}$, where $\lambda \neq \mu, \rho \neq \lambda, \mu$. In this case, the ideal of minors is given by
$$
I_2(\Theta) = ((\mu-\lambda)x_1 x_3, (\mu-\lambda)x_2 x_3, (\rho - \lambda)x_1 x_4, (\rho - \lambda)x_2 x_4, (\rho-\mu)x_2 x_4),
$$
with prime ideals in its primary decomposition $\fp = (x_3, x_4)$, $(x_1, x_2, x_4)$ and $(x_1, x_2, x_3)$. The vector $\nu = (-x_2, x_1, 0, 0)^T$ is a syzygy of degree $1$ for $\Theta$.

From the associativity formula, we may compute 
$$
e_1(\Q) = \length(\Q_{\fp}) \cdot \deg(R/\fp).
$$
Since $R/\fp \simeq k[x_1,x_2, x_5, \ldots, x_n]$, we obtain $\deg(R/\fp) = 1$. Localizing the sequence of $\Q$ at $\fp$, we get
$$
R_{\fp}^4 \xrightarrow{\Theta_{\fp}} R_{\fp}^2 \to \Q_{\fp} \to 0
$$
where we may write
$$
\Theta_{\fp} = \begin{bmatrix}
x_1 & x_2 & x_3 & x_4\\
0   & 0   & (\mu - \lambda)x_3 & (\rho-\lambda)x_4
\end{bmatrix}.
$$
Since the matrix $\Theta_{\fp}$ sends the scaled first basis vector $u e_4$ to $(1,0)$ in $R_{\fp}^2$, with $u = 1/x_1 \in R_{\fp}$, we may compute the cokernel by looking at the second row of the remaining matrix:
$$
\cok(\Theta_{\fp}) \simeq R_{\fp}/((\mu-\lambda)x_3, (\rho - \lambda)x_4) = R_{\fp}/(x_3, x_4) =R_{\fp}/\fp_{\fp} \simeq k,
$$
since $\lambda \neq \mu, \rho$, and thus $\length(\Q_{\fp}) = 1$ and $e_1(\Q) = 1$.
\end{proof}

\begin{Proposition}\label[Proposition]{KWform-nearlyfree-codimQ-1}
Let the Kroenecker-Weierstrass form of a $2 \times 4$ linear matrix $\Theta$ be of the form
$
D_1 \mid J_1(\lambda) \mid J_1(\lambda)$ for $\lambda \neq 0$ or of the form $ J_1(\lambda) \mid J_1(\lambda) \mid J_1(\lambda) \mid J_1(\mu)$, where $\mu \neq \lambda$. Then $\Theta$ is nearly free, with $\codim(\Q) = 1$, initial degree $e = 1$ and Hilbert coefficients $e_0(\Q) = 1$, $e_1(\Q) = -1$. Moreover, the minimal free resolution for $\Theta$ is of the form
$$
0 \to R(-2) \to R^3(-1) \to R^4 \xrightarrow{\Theta} R^2(1).
$$
\end{Proposition}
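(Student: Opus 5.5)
After a preliminary row operation, I would treat both normal forms the same way as in \Cref{KWform-nearlyfree-codimQ-2}. The steps are: compute $I_2(\Theta)$ and its primary decomposition, find a degree-one syzygy, compute $e_0(\Q)$ and $e_1(\Q)$, and then apply the Bourbaki degree formula \eqref{MainBourDegFormual} with $d_1=d_2=1$, $d=2$, $\fq_\Theta=3$. The row operation is to replace the second row by (second row) $-\lambda\cdot$(first row). This is an element of $\GL_2(k)$, so it changes neither $\syz(\Theta)$ nor $\Q$ up to isomorphism.

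For $D_1\mid J_1(\lambda)\mid J_1(\lambda)$ this gives
\[
\Theta\sim\begin{bmatrix} x_1&0&x_2&x_3\\ -\lambda x_1& x_1&0&0\end{bmatrix},
\qquad I_2(\Theta)=x_1\,(x_1,x_2,x_3).
\]
For $J_1(\lambda)^{3}\mid J_1(\mu)$ the second row becomes $(0,0,0,(\mu-\lambda)x_4)$, and $I_2(\Theta)=x_4\,(x_1,x_2,x_3)$. In both cases $I_2(\Theta)$ has a unique height one minimal prime, $\fp=(x_1)$ respectively $(x_4)$, together with an embedded linear prime of height $3$. Hence $\codim(\Q)=1$.

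To get $e_0(\Q)$ I would use the associativity formula with $\deg(R/\fp)=1$. After localizing at $\fp$, some entry of the first row becomes a unit ($x_2$, respectively $x_1$). So $\Q_\fp$ is the cokernel of the remaining part of the second row, namely $R_\fp/(x_1)$ (this uses $\lambda\neq 0$ only through the normal form) or $R_\fp/((\mu-\lambda)x_4)=R_\fp/\fp R_\fp$. Each of these has length $1$, so $e_0(\Q)=1$, consistent with \Cref{Maximale0}.

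The degree-one syzygies are easy to exhibit. In the first case $\nu=(0,0,x_3,-x_2)^T$ works, and in the second $\nu=(x_2,-x_1,0,0)^T$. Since $e\neq 0$ (no constant syzygy, by \Cref{prop:compressibility}, as no column vanishes after any coordinate change), we get $e=1$.

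The step I expect to be delicate is $e_1(\Q)$, because of the sign convention and because it cannot be read off from a single localization. I would compute it from the Hilbert series. I would filter $\Q$ via the image of the second summand: this gives an exact sequence of $\Q$ with a cyclic module $R/(x_1)(1)$, or its analogue, and a module supported on the embedded linear prime of dimension $n-3$. The latter does not affect $e_0$ or $e_1$. From this I expect the numerator $h(t)=t^{-1}(2-2t+t^2)$ over $(1-t)^{n-1}$. That gives $h(1)=1$ and $h'(1)=-1$, so $e_1(\Q)=-1$. As a cross-check, the claimed resolution $R(-2)\to R^3(-1)\to R^4\to R^2(1)$ produces the numerator $t^{-1}(1-t)(2-2t+t^2)$, matching this. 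Then
\[
\Bour(\Theta)=(1-2)(1+1)+3+1+(-1)=1,
\]
so $\Theta$ is nearly free.

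Finally, for the resolution I would apply \Cref{prop:nearly-free-3-syzygy}(a) with $s=e-d+e_0(\Q)=0$. This gives $0\to R(-2)\to R(-1)^2\oplus R(-1)\to\syz(\Theta)\to 0$. Splicing it with $R^4\xrightarrow{\Theta}R^2(1)$ yields the stated minimal free resolution; minimality holds since all maps have positive-degree entries.
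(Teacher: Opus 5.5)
Your proposal is correct and has the same skeleton as the paper's proof: the minors and their primary decomposition, $e_0(\Q)=1$ by localizing at the height-one prime, an explicit linear syzygy, the formula \eqref{MainBourDegFormual}, and \Cref{prop:nearly-free-3-syzygy}(a) with $s=0$ for the resolution. The real difference is how you obtain $e_1(\Q)=-1$. The paper computes it in Macaulay2 for $n=4$ and transfers it to larger $n$ with \Cref{Lem:ext-of-var}. You instead read it off a short exact sequence. This makes the argument computer-free and uniform in $n$, and it shows that the value $-1$ comes only from the twist on the codimension-one piece, since everything else has dimension at most $n-3$.

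Your numbers check out. In the first case the image of $R(1)e_2$ in $\Q$ is $(R/(x_1))(1)$, with quotient $(R/(x_1,x_2,x_3))(1)$. Hence $h(t)=t^{-1}+t^{-1}(1-t)^2=t^{-1}(2-2t+t^2)$, and $e_1(\Q)=h'(1)=-1$ in the paper's convention. In the second case, however, the image of $R(1)e_2$ is $(R/x_4(x_1,x_2,x_3))(1)$, not a copy of $(R/(x_4))(1)$, and the quotient is $(R/(x_1,\dots,x_4))(1)$. It is cleaner to use $R(1)e_1$ there, which gives $0\to (R/(x_1,x_2,x_3))(1)\to \Q\to (R/(x_4))(1)\to 0$ and the same $h(t)$.

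Finally, citing \Cref{prop:compressibility} for $e\neq 0$ just restates what has to be shown. Check it directly instead: no nonzero $v\in k^4$ satisfies $\Theta v=0$. In the second case the first-row entries are linearly independent. In the first case the first row forces $v_1=v_3=v_4=0$, and the second row then forces $v_2=0$.
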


\begin{proof}
If we assume that $\Theta$ is a linear matrix so that $e = 1$, $e_0(\Q) = 1$ and $e_1(\Q) = -1$, it follows from the Bourbaki degree formula that
\begin{align*}
\Bour(\Theta) &= (e-d)(e+e_0(\Q)) + \ell_{\Q} + \fq_{\Theta} + e_1(\Q)\\
&= (1-2)(1+1) + 1 + 3 - 1\\
&= -2+3 = 1.
\end{align*}
Moreover, the minimal free resolution is of the form of the claim, coming from the minimal free resolution of two hyperplanes in $\mathbb{P}^{n-1}$, namely
$$
0 \to R(-2) \to R^2(-1) \to I_\nu \to 0,
$$
for a choice of syzygy $\nu$ of degree $1$, since $s = e-d+e_0(\Q) = 0$. Thus, it suffices to show that $e = 1$, $e_0(\Q) = 1$ and $e_1(\Q) = -1$ for each case.

Let $\Theta = D_1 \mid J_1(\lambda) \mid J_1(\lambda) = \begin{bmatrix}
x_1 & 0 & x_2 & x_3\\
0   & x_1 & \lambda x_2 & \lambda x_3
\end{bmatrix},$ with $\lambda \neq 0$. The vector $\nu = (0, 0, -x_3, x_2)^T$ is a linear syzygy for $\Theta$, hence $e = 1$. The ideal of minors is
$$
I_2(\Theta) = (x_1^2, \lambda x_1 x_2, \lambda x_1 x_3, - x_1 x_2, - x_1 x_3),
$$
and the prime ideals in its primary decomposition are $\fp = (x_1)$ and $(x_1, x_2, x_3)$. Hence, $\codim(\Q) = 1$, and we may use the associativity formula to obtain
$$
e_0(\Q) = \length(\Q_{\fp}) \cdot \deg(R/\fp).
$$
Since $R/\fp \simeq k[x_2, \ldots, x_n]$, $\deg(R/\fp) = 1$. Localizing the sequence of $\Q$ at $\fp$, we obtain
$$
R_{\fp}^4 \xrightarrow{\Theta_{\fp}} R_{\fp}^2 \to \Q_{\fp} \to 0,
$$
where we may write
$$
\Theta_{\fp} = \begin{bmatrix}
x_1            & 0 & x_2 & x_3\\
-\lambda x_1   & x_1 & 0 & 0
\end{bmatrix}.
$$
Note that $\Theta_{\fp}$ sends the scaled fourth basis vector $ue_4$ to $(1,0)$ in $R_{\fp}^2 = (1,0) \cdot R_{\fp} \oplus (0,1) \cdot R_{\fp}$, with $u = 1/x_3$, and thus we may compute the cokernel of $\Theta_{\fp}$ from the second row of the remaining matrix:
$$
\cok(\Theta_{\fp}) \simeq R_{\fp}/(x_1, -\lambda x_1) \simeq R_{\fp}/\fp_{\fp} \simeq k,
$$
thus concluding that $\length(\Q) = 1$, and therefore $e_0(\Q) = 1$. For $n = 4$, we obtain $e_1(\Q) = -1$ using \cite{M2}, and we are able to extend this for $n \geq 4$ using \Cref{Lem:ext-of-var}.

Let $\Theta = J_1(\lambda) \mid J_1(\lambda) \mid J_1(\lambda) \mid J_1(\mu) = \begin{bmatrix}
x_1 & x_2 & x_3 & x_4\\
\lambda x_1   & \lambda x_2 & \lambda x_3 & \mu x_4
\end{bmatrix},$ with $\lambda \neq 0$. The vector $\nu = (-x_2, x_1, 0, 0)^T$ is a linear syzygy for $\Theta$, hence $e = 1$. The ideal of minors is
$$
I_2(\Theta) = ((\mu- \lambda)x_1x_4,(\mu-\lambda)x_2x_4,  (\mu - \lambda)x_4 x_3),
$$
and the prime ideals in its primary decomposition are $\fp = (x_4)$ and $(x_1, x_2, x_3)$. Hence, $\codim(\Q) = 1$, and from the associativity formula
$$
e_0(\Q) = \length(\Q_{\fp}) \cdot \deg(R/\fp).
$$
Since $R/\fp \simeq k[x_1, x_2, x_5, \ldots, x_n]$, $\deg(R/\fp) = 1$. Localizing the sequence of $\Q$ at $\fp$, we obtain
$$
R_{\fp}^4 \xrightarrow{\Theta_{\fp}} R_{\fp}^2 \to \Q_{\fp} \to 0
$$
where we may write
$$
\Theta_{\fp} = \begin{bmatrix}
x_1          & x_2 & x_3 & x_4\\
0            & 0   & 0   & (\mu - \lambda) x_4
\end{bmatrix}.
$$
Note that the matrix $\Theta_{\fp}$ sends the scaled third basis vector $u e_3$ from $R_{\fp}^4$ to $(1,0)$ in $R_{\fp}^2 = (1,0) \cdot R_{\fp} \oplus (0,1) \cdot R_{\fp}$, with $u = 1/x_3$. Thus, we may compute the cokernel of $\Theta_{\fp}$ by considering the second row of the remaining matrix, giving
$$
\cok(\Theta_{\fp}) \simeq R_{\fp}/((\mu - \lambda)x_4) \simeq R_{\fp}/\fp_{\fp} \simeq k,
$$
since $\lambda \neq \mu$, and thus $\length(\Q_{\fp}) = 1$ and we obtain $e_0(\Q) = 1$. For $n = 4$, we obtain from \cite{M2} the data $e_1(\Q) = -1$, which we may extend for $n \geq 4$ using \Cref{Lem:ext-of-var}.
\end{proof}

The last case remaining of the Kronecker--Weierstrass classification is the following one. It is particularly interesting because it fills the gap of achievable Bourbaki degrees for Jacobian matrices of pencils of quadrics $\Bour(\sigma) = 2$, while also inducing a non-integrable distribution when $n=4$, as we will comment in \Cref{ex:KW-form-Bour2}. For completeness, we rewrite the classification in \cite[Theorem 6.1]{Faenzi2025} from our point of view:

\begin{Theorem}\label[Theorem]{thm:pencils-quadrics-FJV}
Let $\Theta$ be a Jacobian matrix of a pair of quadrics $(f,g)$ with $n = 4$. Then, either:
\begin{itemize}
    \item[(a)] $\syz(\Theta) \simeq R(-1)^2$;
    \item[(b)] $\syz(\Theta) \simeq R \oplus R(s)$, when the initial degree $e = 0$;
    \item[(c)] When $\Theta$ is not free, there are two possibilities for minimal free resolutions, namely
    $$
    0 \to R(-3)^2 \to R(-2)^4 \to \syz(\Theta) \to 0,
    $$
    which is of Buchsbaum--Rim form with $\Bour(\Theta) = 3$, or
    $$
    0 \to R(-3) \to R(-2)^2 \oplus R(-1) \to \syz(\Theta) \to 0,
    $$
    of nearly free form, with $\Bour(\Theta) = 1$.
\end{itemize}
Moreover, $\Theta$ is locally free if and only if $\Theta$ is free.
\end{Theorem}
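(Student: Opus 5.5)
The plan is to reduce everything to the Kronecker--Weierstrass analysis of \Cref{sec:linear-matrices}. A Jacobian matrix of two quadrics is linear ($d_1=d_2=1$) and has the form $\Theta=\begin{bmatrix} (Ax)^T\\ (Bx)^T\end{bmatrix}$, where $A,B$ are symmetric $4\times 4$ matrices and $x=(x_1,\dots,x_4)^T$. So $\Theta$ is equivalent to one of the normal forms of \Cref{KWform}, and each of these has already been sorted:
\begin{itemize}
\item Buchsbaum--Rim type with $\Bour(\Theta)=3$ (\Cref{KWform-BR-resolution});
\item free with $\syz(\Theta)\simeq R(-1)^2$ (\Cref{KWform-free});
\item nearly free with $\Bour(\Theta)=1$ (\Cref{KWform-nearlyfree-codimQ-2}, \Cref{KWform-nearlyfree-codimQ-1});
\item the single exceptional form $D_2\mid B_1$.
\end{itemize}
If $e=0$, \Cref{prop:compressibility} gives case (b) directly. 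For the non-free cases, the claimed resolutions of $\syz(\Theta)$ come from truncating the resolutions of $\Q$ listed in those propositions.

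\textbf{Step 1: $D_2\mid B_1$ is never Jacobian.} This is the main obstacle. I would first try a structural argument. Equivalence by $\GL_2(k)\times\GL_4(k)$ of $\Theta$ corresponds to strict equivalence of the pencil $sA+tB$, and the blocks $D_m$ and $B_m$ record the minimal indices of that pencil. A pencil of \emph{symmetric} matrices has equal left and right minimal indices, so its singular blocks come in matched pairs. $D_2\mid B_1$ has one $D$-block and one $B$-block of different sizes, so it cannot arise from a symmetric pencil. The delicate part is the bookkeeping between the $2\times 4$ matrix in the variables $x_i$ and the $4\times4$ pencil; I would settle it on one explicit example before trusting the general principle.

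If that bookkeeping proves awkward, the fallback is to use the classification of Segre symbols in \cite[Theorem 6.1]{Faenzi2025}. For each symbol I would write the normal form of $(f,g)$, take the Jacobian matrix, and identify its Kronecker--Weierstrass type; $D_2\mid B_1$ should not appear in that list. In the same pass I would check that every Jacobian type lands in the lists of the propositions above, including the eigenvalue coincidence conditions.

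\textbf{Step 2: locally free if and only if free.} One direction is trivial, so it remains to show that no non-free Jacobian case is locally free; I would use \Cref{lem:loc-free-associated-primes}(a).
\begin{itemize}
\item In the Buchsbaum--Rim case $\hht I_2(\Theta)=3<4=n$, so $\Q$ has a minimal prime of codimension $3$ different from $\fm$, and $\Theta$ is not locally free.
\item In the nearly free cases, the primary decompositions computed in \Cref{KWform-nearlyfree-codimQ-2} and \Cref{KWform-nearlyfree-codimQ-1} each exhibit an associated prime of height $3$, such as $(x_1,x_2,x_4)$ or $(x_1,x_2,x_3)$. Since $n=4$, these differ from $\fm$, so again $\Theta$ is not locally free.
\end{itemize}
The only non-free linear type that is locally free is $D_2\mid B_1$, which Step 1 excludes, so the final claim follows.
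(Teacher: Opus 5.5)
\textbf{The nearly free step fails.} The paper gives no proof of its own here; it restates \cite[Theorem 6.1]{Faenzi2025}. Your Kronecker--Weierstrass reduction is therefore an independent route, and your Step 1 is sound. A symmetric pencil equals its own transpose, so its left and right minimal indices agree, whereas $D_2\mid B_1$ has one $L_1$ block and one $L_2^{T}$ block. This is stronger than the check in the paper's example, which only shows that one representative of the class is not a gradient. The problem is what happens when you sort the Jacobian types.

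The types of \Cref{KWform-nearlyfree-codimQ-1} have $e_0(\Q)=1$, so $s=e-d+e_0(\Q)=0$. Truncating their resolution gives $0\to R(-2)\to R(-1)^3\to\syz(\Theta)\to 0$, which is neither shape listed in (c).

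These types are Jacobian. The Segre symbol $[(111)1]$ gives $J_1(\lambda)^{3}\mid J_1(\mu)$. For example, take $f=x_1^2+x_2^2+x_3^2+x_4^2$ and $g=f+x_4^2$:
\begin{itemize}
\item $(f,g)$ is a regular sequence, both rows of $\nabla(f,g)$ have gcd $1$, and $e=1$;
\item subtracting the rows gives $\syz(\Theta)=\{(a,b,c,0) : x_1a+x_2b+x_3c=0\}$, the Koszul syzygies of $(x_1,x_2,x_3)$;
\item so $\syz(\Theta)$ has three linear generators and one relation in degree $2$.
\end{itemize}
The symbol $[(211)]$, e.g.\ $f=2x_1x_2+x_3^2+x_4^2$ and $g=f+x_1^2$, gives $J_2(\lambda)\mid J_1(\lambda)\mid J_1(\lambda)$ with the same resolution. \Cref{KWform-nearlyfree-codimQ-2} lists this form, but its proof actually treats $J_2(\lambda)\mid J_1(\lambda)\mid J_1(\mu)$. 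For $J_2(\lambda)\mid J_1(\lambda)\mid J_1(\lambda)$ one computes $I_2(\Theta)=x_1(x_1,x_3,x_4)$, so $e_0(\Q)=1$ and the shape $R(-3)\to R(-2)^2\oplus R(-1)$ does not hold.

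So the check in your fallback will not place every Jacobian type in (a), (b) or (c). As literally stated, (c) fails for these two Segre symbols. To finish, you must do one of two things:
\begin{itemize}
\item add a hypothesis excluding these pencils, for instance $e_0(\Q)=0$, i.e.\ $\hht I_2(\Theta)\ge 2$ (normality); or
\item enlarge (c) by the shape $0\to R(-2)\to R(-1)^3\to\syz(\Theta)\to 0$, which still has $\Bour(\Theta)=1$.
\end{itemize}

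\textbf{A smaller gap in Step 2.} The height-three primes you read off are associated primes of $R/I_2(\Theta)$, and several are embedded. For $D_2\mid J_1(\lambda)$ and $J_3(\lambda)\mid J_1(\lambda)$ the radical of $I_2(\Theta)$ is $(x_1,x_2)$. \Cref{lem:loc-free-associated-primes}(a) needs these primes to lie in $\Ass(\Q)$, and that does not follow.

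Argue from the resolution instead. Given $0\to R(-a)\xrightarrow{\phi}F\to\syz(\Theta)\to 0$, the localization $\syz(\Theta)_{\fp}$ is free if and only if $\phi_{\fp}$ splits. That happens exactly when some entry of $\phi$ lies outside $\fp$. Two linear forms and a quadric, or three linear forms, always have a common zero in $\mathbb{P}^3$. So no non-free Jacobian case is locally free, and this argument also covers the extra shape above.
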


\begin{Theorem}\label[Theorem]{thm:KWform-Bour2}
Let the Kroenecker-Weierstrass form of a $2\times 4$ linear matrix $\Theta$ be of the form
$$
\Theta = D_2 \mid B_1 = \begin{bmatrix}
x_1 & x_2 & 0   & x_3\\
0   & x_1 & x_2 & x_4
\end{bmatrix}.
$$
Then $e_0(\Q) = 0$, $e_1(\Q) = 1$ and $e = 2$, which gives $\Bour(\Theta) = 2$. Moreover, the minimal free resolution for $\Theta$ is of the following form:
$$
0 \to R(-4) \to R^4(-3) \to R^5(-2) \to R^4 \xrightarrow{\Theta} R^2(1)\to \Q\to 0.
$$
\end{Theorem}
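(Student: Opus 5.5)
The plan is to compute the three numerical ingredients $e$, $e_0(\Q)$, $e_1(\Q)$ directly and then feed them into the formula of \Cref{Bourideal}. The initial degree is already settled: \Cref{D2B1} shows $\syz(\Theta)_0=\syz(\Theta)_1=0$, and by \eqref{UPindeg} the columns of the Buchsbaum--Rim matrix \eqref{BRsyzygy} give syzygies in degree $d=2$. These are nonzero because the minor $\Delta_{12}=x_1^2\neq 0$, so $e=2$.

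Next I compute $I_2(\Theta)$. The minors are
$$\Delta_{12}=x_1^2,\quad \Delta_{13}=x_1x_2,\quad \Delta_{14}=x_1x_4,\quad \Delta_{23}=x_2^2,\quad \Delta_{24}=x_2x_4-x_1x_3,\quad \Delta_{34}=-x_2x_3.$$
Every minor lies in $\fp=(x_1,x_2)$, and no linear form divides all of them, so $\codim\Q=2$ and $e_0(\Q)=0$. Moreover $V(I_2(\Theta))=V(x_1,x_2)$, so $\fp$ is the only minimal prime. By the associativity formula, $e_1(\Q)=\length(\Q_\fp)\deg(R/\fp)=\length(\Q_\fp)$. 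To compute this length I localize at $\fp$, where $x_3$ and $x_4$ are units. Column $4$ lets me split off one generator, which leaves $\Q_\fp\simeq R_\fp/(\text{entries of row 2 minus } (x_4/x_3)\cdot\text{row 1})$. The reduced row is $(-x_1x_4/x_3,\ x_1-x_2x_4/x_3,\ x_2,\ 0)$, which generates $\fp R_\fp$, so $\length(\Q_\fp)=1$ and $e_1(\Q)=1$. Substituting into the $e_0(\Q)=0$ case of \Cref{Bourideal} gives $\Bour(\Theta)=e(e-d)+\fq_\Theta-e_1(\Q)=2\cdot 0+3-1=2$.

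For the resolution, the Betti numbers could be obtained with Macaulay2 for $n=4$ and extended to all $n$ by \Cref{Lem:ext-of-var}. I would rather argue conceptually through \Cref{Thm:Bourbaki-resolutions}. Take a degree-$2$ syzygy $\nu$; it is a minimal generator since $e=2$. The shift is $s=e-d+e_0(\Q)=0$, so the Bourbaki sequence reads $0\to R(-2)\to\syz(\Theta)\to I_\nu\to 0$ with $\deg(R/I_\nu)=2$. I would determine $I_\nu$ explicitly, for instance with $\nu$ the first Buchsbaum--Rim column $(0,-x_1^2,-x_1x_2,-x_1x_4)^T$ or a better chosen quadratic syzygy. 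The expectation, by analogy with \Cref{ex:quadrics-Bour2}, is that $I_\nu$ is the intersection of two skew lines, say $(x_1,x_2)\cap(x_3,x_4)$ after coordinates. That ideal has resolution $0\to R(-4)\to R(-3)^4\to R(-2)^4\to I_\nu\to 0$. Lifting it via \Cref{Thm:Bourbaki-resolutions}(a) gives $0\to R(-4)\to R(-3)^4\to R(-2)^5\to\syz(\Theta)\to 0$, and splicing with $R^4\xrightarrow{\Theta}R^2(1)$ produces the claimed resolution.

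For minimality, the lifted resolution has no cancellations because consecutive shifts differ by one degree and the maps are in positive degree. The leftmost map is also minimal, since $\syz(\Theta)$ has no elements of degree $\le 1$.

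The main obstacle is identifying $I_\nu$ concretely, which requires computing generators of $\syz(\Theta)_2$ (expected to have dimension $5$) and the quotient by $\nu$. If the explicit identification is messy, I would fall back on a Macaulay2 computation of the minimal resolution for $n=4$, justified for general $n$ by \Cref{Lem:ext-of-var}. Depth then confirms the length-$4$ shape: $\fm$ is associated to $\Q$ precisely because $\Theta$ is locally free yet not free, consistent with \eqref{depth_zero}.
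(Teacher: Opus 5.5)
Your computation of $e$, $e_0(\Q)$, $e_1(\Q)$ and $\Bour(\Theta)=2$ follows the paper's argument step for step and is correct. There is one slip. The vector $(0,-x_1^2,-x_1x_2,-x_1x_4)^T$ is not a syzygy: the first row of $\Theta$ sends it to $-x_1^2x_2-x_1x_3x_4$, because the columns of \eqref{BRsyzygy} as printed have the minors in the wrong positions. The genuine Buchsbaum--Rim syzygies are $\Delta_{jk}e_i-\Delta_{ik}e_j+\Delta_{ij}e_k$ for $i<j<k$, e.g.\ $(x_2^2,-x_1x_2,x_1^2,0)^T$. These are nonzero of degree $2$, so $e=2$ stands.

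The gap is in the resolution. That part rests entirely on the Betti numbers of $I_\nu$, which you only expect ``by analogy''. Moreover, the shape of $I_\nu$ actually depends on $\nu$. Compute $I_\nu$ as the ideal of the coefficients $h_\mu$ in $\nu\wedge\mu=h_\mu w$, where $w$ is the primitive vector of signed complementary $2$-minors in $\wedge^2R^4$. Then $\nu=(x_2^2,-x_1x_2,x_1^2,0)^T$ gives $I_\nu=(x_1^2,x_1x_2,x_2^2,x_1x_3+x_2x_4)$, a double line of arithmetic genus $-1$, not two skew lines. By contrast, $\nu=(x_2x_3,0,x_1x_4,-x_1x_2)^T$ gives $(x_1,x_3)\cap(x_2,x_4)$.

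The missing idea is to fix the Betti numbers of $I_\nu$ without guessing the curve, which the paper does numerically. For $n=4$, $\widetilde{\Q}\simeq\mathcal{O}_L(2)$ on the line $L=V(x_1,x_2)$, so $\mathrm{HP}_\Q(t)=t+3$. Additivity along $0\to\syz(\Theta)\to R^4\to R(1)^2\to\Q\to0$ and $0\to R(-2)\to\syz(\Theta)\to I_\nu\to0$ then gives $\mathrm{HP}_{R/I_\nu}(t)=2t+2$. So $I_\nu$ defines a curve of degree $2$ and genus $-1$, which is locally Cohen--Macaulay because $\Theta$ is locally free when $n=4$. Such a curve is either two skew lines or a genus $-1$ double line; the paper names only the former. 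Both have the resolution $0\to R(-4)\to R(-3)^4\to R(-2)^4\to I_\nu\to0$.

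Alternatively, your explicit route closes with the second choice of $\nu$ above. The remaining basis elements of $\syz(\Theta)_2$ are three further Buchsbaum--Rim syzygies and the extra generator $(x_3^2,x_3x_4,x_4^2,-x_1x_3-x_2x_4)^T$. Up to sign they map to $x_1x_2,x_2x_3,x_1x_4,x_3x_4$, so $(x_1,x_3)\cap(x_2,x_4)\subseteq I_\nu$. Equality holds because both ideals have degree $2$ and the smaller one is unmixed.

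Once the resolution of $I_\nu$ is established, the rest of your plan finishes the proof as in the paper. That is the lift via Theorem~\ref{Thm:Bourbaki-resolutions}(a), your minimality check, and Lemma~\ref{Lem:ext-of-var} for $n\ge5$.
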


\begin{proof}
We have shown that the minimal degree for a syzygy is $e = 2$ in \Cref{D2B1}.

The ideal of minors of $\Theta$ is
$$
I_2(\Theta) = (x_1^2, x_1 x_2, x_2^2, x_1 x_4, x_2 x_4 - x_1 x_3, - x_2 x_3),
$$
with primes in the primary decomposition $\fp = (x_1, x_2)$ and $(x_1, x_2, x_3, x_4)$, so $\codim(\Q) = 2$ and $e_0(\Q) = 0$. To compute $e_1(\Q)$, we use the associativity formula with respect to $\fp$. Since $R/\fp \simeq k[x_3, \ldots, x_n]$, $\deg(R/\fp) = 1$. At the localization at $\fp$, we use the element $u = x_4/x_3 \in R_{\fp}$ to rewrite $\Theta_{\fp}$ as
$$
\Theta_{\fp} = \begin{bmatrix}
x_1 & x_2 & 0 & 1\\
-u x_1 & x_1 - u x_2 & x_2 & 0
\end{bmatrix}.
$$
Thus, the map $\Theta_{\fp}: R_{\fp}^4 \to R_{\fp}^2$ sends the fourth basis vector $e_4$ to $(1,0) \in R_{\fp}^2$. The image of $\Theta_{\fp}$ contains the direct summand $R_{\fp} \cdot (1,0) \subset R_{\fp}^2$, and we may compute the cokernel using the second row of the remaining matrix, as follows:
\begin{align*}
\cok(\Theta_{\fp}) \simeq R_{\fp}/( -u x_1, x_2, x_1 - u x_2 ) \simeq R_{\fp}/(x_1, x_2) = R_{\fp}/\fp_{\fp} \simeq k,
\end{align*}
so $\length(\Q_{\fp}) = 1$, and $e_1(\Q)=1$. Since $e = 2$, $e_0(\Q) = 0$ and $e_1(\Q) = 1$, it follows from the Bourbaki degree formula that $\Bour(\Theta) = 2$.

For $n = 4$, the Hilbert polynomial of $\Q$ is of the form $t + 3$, indicating that the genus of the projective curve associated to the ideal $I_\nu$ is a curve with degree $2$ and genus $g = -1$, from the exact sequences:
$$
0 \to \syz(\Theta) \to R^4 \xrightarrow{\Theta} R^2(1) \to \Q \to 0 \ \ , \ \ 0 \to R(-2) \to \syz(\Theta) \to I_\nu \to 0,
$$
and therefore it is a union of two skew lines. Using \Cref{Thm:Bourbaki-resolutions}, we conclude that the minimal free resolution for $\Theta$ in this case is of the form:
$$
0 \to R(-4) \to R^4(-3) \to R^5(-2) \to R^4 \xrightarrow{\Theta} R^2(1),
$$
coming from a lift of $I_\nu$. For $n \geq 5$, we have the same minimal free resolution, using \Cref{Lem:ext-of-var}.
\end{proof}


\section{The geometric point of view}\label{sec:geometrical-point-of-view}

In this section, we use the theory of sheaves and distributions on projective spaces to obtain some results for $n = 4$ and for the general case.

For $n=4$, we are generalizing the setting considered in \cite{Faenzi2025}, \cite{monteiro2025}, for logarithmic sheaves on $\mathbb{P}^3$, where authors assume the matrices $\Theta$ are \emph{Jacobian matrices}, that is, there are some homogeneous polynomials $f,g$ such that
$$
\Theta = \nabla (f,g) = \begin{bmatrix}
\nabla f\\
\nabla g
\end{bmatrix}
$$
In \cite{Faenzi2025}, it is shown that every logarithmic sheaf induces a codimension one foliation in $\mathbb{P}^3$ or, in other terms, the syzygy module becomes a submodule $\syz(\Theta)(1) \hookrightarrow T$, where $T$ is the graded module associated to the tangent sheaf of $\mathbb{P}^3$. Moreover, for it to define a \emph{distribution}, we assume the cokernel of this submodule is torsion-free, and for it to be a \emph{foliation}, we need an extra integrability condition.

\subsection{Syzygy modules and distributions}\label{subsec:syz-dist}

Without the assumption of being a Jacobian matrix, we may lose the condition of being a distribution. The main result of this section displays a sufficient condition for this to occur.

\begin{Theorem}\label[Theorem]{lem:existence-dist}
Let $R=k[x_1,\ldots,x_4]$ be a polynomial ring over $k$ and $\Theta$ be a $2\times 4$ matrix of rank $2$ in $R$ whose first and second rows consist of homogeneous polynomials of degrees $d_1$ and $d_2$, respectively. Let
$$
\varepsilon \doteq \begin{bmatrix}
x_1\\
x_2\\
x_3\\
x_4
\end{bmatrix}: R(-1) \to R^4
$$
denote the \emph{Euler vector}. The composition $\Theta \circ \varepsilon: R(-1) \to R(d_1) \oplus R(d_2)$ can be written in terms of two homogeneous polynomials, say
$$
\Theta \circ \varepsilon = \begin{bmatrix}
h_1\\
h_2
\end{bmatrix}.
$$
If $(h_1, h_2)$ is a regular sequence, then $\syz(\Theta)(1)$ is the graded module associated to the tangent sheaf of a codimension one distribution on $\mathbb{P}^3$. In particular, this always holds if $\Theta$ is a Jacobian matrix of a regular sequence of homogeneous polynomials $(f,g)$.
\end{Theorem}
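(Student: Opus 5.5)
Let $T$ be the graded module of the tangent sheaf of $\mathbb{P}^3$. The plan is to show that the composite $\syz(\Theta)(1)\hookrightarrow R^4(1)\twoheadrightarrow T=\cok(\varepsilon)(1)$ is injective and has torsion-free cokernel. Here $T$ comes from the Euler sequence
\[
0\to R\xrightarrow{\varepsilon} R^4(1)\to T\to 0.
\]
Once this holds, the subsheaf $\widetilde{\syz(\Theta)}(1)\subset T_{\mathbb{P}^3}$ is saturated of rank $2$, which is exactly a codimension one distribution.

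\textbf{Injectivity.} We need $\syz(\Theta)\cap \varepsilon R=0$. If $a\varepsilon\in\syz(\Theta)$, then $\Theta(a\varepsilon)=a(h_1,h_2)=0$. Since $(h_1,h_2)$ is a regular sequence, $h_1\neq 0$, so $a=0$.

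\textbf{The cokernel.} Set $C=R^4/(\syz(\Theta)+\varepsilon R)$. Then $\Theta$ induces an isomorphism
\[
C\simeq \N/\Theta(\varepsilon R)=\N/(h_1,h_2)R\subset R(d_1)\oplus R(d_2),
\]
where $(h_1,h_2)R$ denotes the cyclic submodule generated by the column vector $(h_1,h_2)^T$. This $C$ is the cokernel of the inclusion above, up to twist. It has rank $4-2-1=1$.

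\textbf{Torsion-freeness.} It suffices to show that $F:=(R(d_1)\oplus R(d_2))/(h_1,h_2)R$ is torsion-free, since $C$ is a submodule of $F$. The module $F$ is the cokernel of the column $(h_1,h_2)^T$. Because $(h_1,h_2)$ is a regular sequence, the Koszul complex
\[
0\to R\xrightarrow{(h_1,h_2)^T} R^2\xrightarrow{(-h_2,\;h_1)} R\to R/(h_1,h_2)\to 0
\]
is exact. Hence $F\simeq (h_1,h_2)$ as an ideal up to twist, via $(a,b)\mapsto -ah_2+bh_1$. An ideal is torsion-free, so $C$ is torsion-free. It is also worth checking that the rank-two subsheaf really has codimension-one nature. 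The quotient $T/\syz(\Theta)(1)\simeq C$ is torsion-free of rank one, so it is the ideal sheaf of the singular scheme twisted by some line bundle. This matches the standard definition of a distribution, $0\to\mathcal F\to T_{\mathbb{P}^3}\to \mathcal I_Z(c)\to 0$.

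\textbf{Jacobian case.} If $\Theta=\nabla(f,g)$, Euler's formula gives $h_1=\deg(f)\,f$ and $h_2=\deg(g)\,g$. In characteristic zero these differ from $f,g$ by nonzero scalars, so $(h_1,h_2)$ is a regular sequence whenever $(f,g)$ is.

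\textbf{Expected obstacle.} The main point to be careful about is the identification of $C$ with a submodule of $F$. Concretely, one must check that $\ker\Theta+\varepsilon R=\Theta^{-1}(\Theta(\varepsilon R))$, which is immediate, and that the torsion-freeness passes from the graded module to the sheaf. Torsion-freeness of a graded module does imply torsion-freeness of the associated sheaf, so this should be routine. The remaining bookkeeping concerns twists, which determine the degree of the distribution but not its existence.
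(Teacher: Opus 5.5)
Your proposal is correct and follows the paper's proof. The paper also embeds $\syz(\Theta)$ into $T(-1)$ through the Euler sequence, identifies the quotient with a submodule of $E=\cok(\Theta\circ\varepsilon)$, uses that $E$ is torsion-free because $(h_1,h_2)$ is a regular sequence, and settles the Jacobian case by Euler's formula. Your explicit check that $\syz(\Theta)\cap\varepsilon R=0$ and your Koszul identification of $E$ with the ideal $(h_1,h_2)$ up to twist simply spell out steps the paper states briefly.
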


\begin{proof}
From this assumption, we conclude that the cokernel module $E = \cok(\Theta \circ \varepsilon)$ is torsion-free, since it fails to be locally free precisely over the ideal $(h_1, h_2)$. Now, consider the commutative diagram of modules formed using the Euler exact sequence, which defines the tangent module $T$:
\[\begin{tikzcd}
	&& {R(-1)} & {R(-1)} \\
	0 & {\syz(\Theta)} & {R^4} & {\text{im } \Theta} & 0 \\
	0 & {\syz(\Theta)} & {T(-1)} & F & 0
	\arrow[equals, from=1-3, to=1-4]
	\arrow[from=1-3, to=2-3]
	\arrow[from=1-4, to=2-4]
	\arrow[from=2-1, to=2-2]
	\arrow[from=2-2, to=2-3]
	\arrow[equals, from=2-2, to=3-2]
	\arrow[from=2-3, to=2-4]
	\arrow[two heads, from=2-3, to=3-3]
	\arrow[from=2-4, to=2-5]
	\arrow[two heads, from=2-4, to=3-4]
	\arrow[from=3-1, to=3-2]
	\arrow[from=3-2, to=3-3]
	\arrow[from=3-3, to=3-4]
	\arrow[from=3-4, to=3-5]
\end{tikzcd}\]
We note that $F \doteq \cok(\syz(\Theta) \to T(-1)) \hookrightarrow E$ is a submodule, and therefore it is also torsion-free. But this condition is enough to say that the short exact sequence of sheaves associated to the bottom row defines a codimension one distribution on $\mathbb{P}^3$. 

If the matrix is Jacobian, note that, from the Euler relation, if $f$, $g$ are homogeneous of degrees $d_1+1, d_2+2$, we have
$$
\sum_{i=1}^4 x_i \partial_{i} f = (d_1+1) f, \ \sum_{i=1}^4 x_i \partial_{i} g = (d_2+1) g, 
$$
and hence $h_1 = (d_1+1)f$, $h_2 = (d_2 + 1)g$, so the claim follows.
\end{proof}

If a matrix $\Theta$ satisfies the conditions of the Lemma above, we say that $\syz(\Theta)$ \emph{induces a distribution}, and the associated distribution will be said to have a \emph{matrix presentation} if it arises from a diagram as above. All linear matrices, studied in \Cref{sec:linear-matrices}, satisfy the condition above. For an example of a distribution arising this way which is non-integrable, we turn to the case described in \Cref{thm:KWform-Bour2}.

\begin{Example}\label[Example]{ex:KW-form-Bour2}
Assume that the matrix $\Theta$ is written in the form below
$$
\Theta = \begin{bmatrix}
x_1 & x_2 & 0   & x_3\\
0   & x_1 & x_2 & x_4
\end{bmatrix}.
$$
Then, the polynomials $h_1, h_2$ considered in the claim of the Lemma are
\begin{align*}
h_1 &= x_1^2 + x_2^2 + x_3 x_4\\
h_2 &= x_1 x_2 + x_2 x_3 + x_4^2
\end{align*}
which form a complete intersection, so it does induce a codimension one distribution on $\mathbb{P}^3$, of degree two.

We may see that $\Theta$ is not a Jacobian matrix for this choice of coordinates, from the first line: if $f \in R$ such that $\nabla f = (x_1, x_2, 0, x_3)$, then
$$
1 = \partial_3 (x_3) = \partial_3 \partial_4 f \neq \partial_4 \partial_3 f = \partial_4(0) = 0,
$$
a contradiction. 

In \cite[Theorem $6.1$]{Faenzi2025}, the authors classify all possible syzygy modules $\syz(\Theta)$ arising from Jacobian matrices associated to pairs of polynomials $(f,g)$ with $d_1 = d_2 = 1$, where $(f,g)$ is called a \emph{pencil of quadrics}. If we look at their classification, there are cases with $\Bour(\Theta) \in \{0, 1, 3\}$. Moreover, they are locally free on the punctured spectrum if and only if they are free. The example considered above is locally free but not free, and it satisfies $\Bour(\Theta) = 2$, a value of Bourbaki degree missing in this previous classification.

The sheaf associated to $\syz(\Theta)$ is a \emph{null-correlation bundle}, and distributions with this tangent bundle were studied extensively in \cite{calvo2020codimension}, where the authors show that they are not integrable. This suggests that integrability could be related to the matrix being Jacobian. We also remark that this is the first matrix presentation of this kind for these distributions.
\end{Example}

For an example of a matrix that does not induce a distribution in this way, consider
$$
\Theta = \begin{bmatrix}
x_3    & x_4     & 0 & 0\\
x_1x_3 & x_1x_4  & -x_1x_2 & -x_2^2
\end{bmatrix}.
$$
Here, the syzygy module is free, with $\syz(\Theta) \simeq R(-1)^2$. It fails the condition of the Lemma, as the elements
\begin{align*}
h_1 &= x_1 x_3 + x_2 x_4 \\
h_2 &= x_1^2 x_3 + x_1 x_2 x_4 - x_1 x_2 x_3 - x_2^2 x_4\\
&= (x_1 - x_2)(x_1 x_3 + x_2 x_4)
\end{align*}
do not form a regular sequence. However, there is a codimension one distribution with tangent sheaf $\mathcal{O}_{\mathbb{P}^3}^{\oplus 2}(-1)$. Our condition does not classify whether a syzygy module is a tangent sheaf for a codimension one distribution or not, but rather it states a sufficient condition for a matrix to induce it via the Euler derivation.

\begin{Remark}\label[Remark]{rmk:distributions-normal-matrices}
Let us use the same notation from \Cref{lem:existence-dist}. If a matrix $\Theta$ is such that $\Theta \circ \varepsilon = (h_1, h_2)$ is not a regular sequence, we obtain $e_0(\Theta) \neq 0$. Indeed, let $f \in R$ be a homogeneous polynomial with $\deg(f) = t > 0$ such that 
$$
h_1 = f g_1, h_2 = f g_2,
$$
for some $g_1, g_2$ homogeneous. Now, let $M \doteq \text{coker} \Theta$. Then $\Fitt_0(M) = I_2(\Theta)$, and Fitting ideals satisfy a base-change formula, given by
$$
\Fitt_0(M \otimes_R S) = \Fitt_0(M) \cdot S
$$
whenever $R \to S$ is a ring map. Consider the particular case where $S = R/I$ and $I = (f)$, and consider the base change of the whole composition
$$
S(-1) \xrightarrow{\overline{\varepsilon}} S^4 \xrightarrow{\overline{\Theta}} S(d_1) \oplus S(d_2).
$$
Then the image of $\overline{\varepsilon}$ lies in $\ker(\overline{\Theta})$, since the image $(h_1, h_2) = 0 \mod I$. Thus the rank of $\overline{\Theta}=\Theta \otimes_R S$ is at most one, and therefore $\Fitt_0(M \otimes S) = (0)$. Coming back to the base-change formula, we get
$$
\Fitt_0(M) \cdot S/I = (0) \subset \frac{S}{I},
$$
and thus $\Fitt_0(M) \subset I$. But this gives $V(f) = V(I) \subset V(\Fitt_0(M)) = V(I_2(\Theta))$, hence $e_0(M) \geq t > 0$. In particular, if the ideal $I_2(\Theta)$ has codimension at least two, then $\Theta$ always induces a codimension one distribution.
\end{Remark}

\subsection{The case $n = 4$ with initial degree $e = 0, 1$.}

If we consider a minimal generating syzygy $\nu: R(-e) \to \syz(\Theta)$, we obtain another sequence
\[\begin{tikzcd}
	& {R(-e)} & {R(-e)} \\
	0 & {\syz(\Theta)} & {T(-1)} & {I_Z(t)} & 0 \\
	0 & {I_\nu(s)} & F & {I_Z(t)} & 0
	\arrow[equals, from=1-2, to=1-3]
	\arrow[from=1-2, to=2-2]
	\arrow[from=1-3, to=2-3]
	\arrow[from=2-1, to=2-2]
	\arrow[from=2-2, to=2-3]
	\arrow[from=2-2, to=3-2]
	\arrow[from=2-3, to=2-4]
	\arrow[from=2-3, to=3-3]
	\arrow[from=2-4, to=2-5]
	\arrow[from=2-4, to=3-4]
	\arrow[from=3-1, to=3-2]
	\arrow[from=3-2, to=3-3]
	\arrow[from=3-3, to=3-4]
	\arrow[from=3-4, to=3-5]
\end{tikzcd}\]
which relates the conormal module $F$ of a foliation by curves on $\mathbb{P}^3$ of degree $e$ at the middle column to the Bourbaki ideal $I_\nu(s)$ (see, for example, \cite{correa2023classification}). The number $e\geq 0$ is called the \emph{degree} of the foliation by curves. For degrees $e \in \{0, 1\}$ these are classified in \cite[Theorem 4]{GALEANO2022106840}. Using this classification, we follow the strategy described in \cite[Section 3]{monteiro2025} to obtain the results of this section. 

We change to sheaf notation, denoting by $\mathcal{F}=\Tilde{F}$ the associated sheaf to $F$ on $\mathbb{P}^3$ and by $\mathcal{I}_\nu$ the ideal sheaf associated to $I_\nu$.
We start by dualizing the associated short exact sequence at the bottom of the diagram above, namely
$$ 0 \to \mathcal{I}_\nu(s) \to \mathcal{F} \to \mathcal{I}_Z(t) \to 0 $$
to obtain a long exact sequence of sheaves, which ends at
\[\begin{tikzcd}
	\ldots & {\mathcal{E}xt^1(\mathcal{F}, \mathcal{O}_{\mathbb{P}^3})} & {\mathcal{E}xt^1(\mathcal{I}_\nu(s), \mathcal{O}_{\mathbb{P}^3})} & {\mathcal{E}xt^2(\mathcal{I}_Z(t), \mathcal{O}_{\mathbb{P}^3})} \\
	\ldots & {\mathcal{O}_W} & {\omega_\nu(4-s)} & {\mathcal{E}xt^3(\mathcal{O}_Z(t), \mathcal{O}_{\mathbb{P}^3})}
	\arrow[from=1-1, to=1-2]
	\arrow[from=1-2, to=1-3]
	\arrow[equals, from=1-2, to=2-2]
	\arrow[two heads, from=1-3, to=1-4]
	\arrow[equals, from=1-3, to=2-3]
	\arrow[equals, from=1-4, to=2-4]
	\arrow[from=2-1, to=2-2]
	\arrow[from=2-2, to=2-3]
	\arrow[two heads, from=2-3, to=2-4]
\end{tikzcd}\]
where $W$ is the singular scheme of the rank one distribution of degree one defined by $\mathcal{F}$ and $\omega_\nu \simeq \mathcal{E}xt^2(\mathcal{I}_\nu, \omega_{\mathbb{P}^3})$ is the dualizing sheaf of the curve defined by $I_\nu$. Then, since $\mathcal{E}xt^3(\mathcal{O}_Z(t), \mathcal{O}_{\mathbb{P}^3})$ is zero-dimensional, it follows that $\deg(R/I_\nu) \leq \deg(W)$, where all the contribution of codimension two of $\omega_\nu$ comes from the sheaf $\mathcal{O}_W$. 

The following application gives another proof of \Cref{prop:compressibility} in the particular case when $\syz(\Theta)$ induces a distribution.

\begin{Proposition}\label[Proposition]{prop:e-0-foliations}
Let $R=k[x_1,\ldots,x_4]$ be a polynomial ring over $k$ and $\Theta$ be a $2\times 4$ matrix of rank $2$ in $R$ whose first and second rows consist of homogeneous polynomials of degrees $d_1$ and $d_2$, respectively, such that $\syz(\Theta)$ induces a distribution. If the minimal degree of a syzygy of $\Theta$ is $e = 0$, then $\syz(\Theta)$ is free with $\syz(\Theta) \simeq R\oplus R(s)$, with $s = e_0(\Q)-d$.
\end{Proposition}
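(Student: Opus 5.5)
The plan follows the foliation-by-curves strategy set up just before the statement, as in \cite[Section 3]{monteiro2025}. Take a minimal syzygy $\nu\colon R(-e)=R\to\syz(\Theta)$ of degree $e=0$. Since $\syz(\Theta)$ induces a distribution, we have the diagram above: $\syz(\Theta)\hookrightarrow T(-1)$ with torsion-free cokernel $I_Z(t)$, and the middle column $R\to T(-1)\to F$ is a foliation by curves of degree $e=0$ on $\mathbb{P}^3$. By the classification \cite[Theorem 4]{GALEANO2022106840}, a degree zero foliation by curves is the radial foliation given by projection from a point $p$. Hence its singular scheme $W$ is zero-dimensional (the reduced point $p$), and $\mathcal F$ has no codimension-two singular locus.

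Next, dualize the bottom row $0\to\mathcal I_\nu(s)\to\mathcal F\to\mathcal I_Z(t)\to0$. As explained above, this yields $\mathcal O_W\to\omega_\nu(4-s)\to\mathcal Ext^3(\mathcal O_Z(t),\mathcal O_{\mathbb P^3})\to0$. The right-hand term is zero-dimensional, and $W$ is zero-dimensional, so $\omega_\nu$ has no one-dimensional support. This forces $\deg(R/I_\nu)\le\deg(W)=0$ in the codimension-two count. So the curve defined by $I_\nu$ is empty and $\Bour(\Theta)=0$. This means $M_\nu\simeq I_\nu(s)$ is free up to the saturation issue: $I_\nu$ is unmixed of height two or the unit ideal (Theorem~\ref{Bourideal}), so degree zero gives $I_\nu=R$. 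By Theorem~\ref{Bourideal}(a), $\Theta$ is free, with $\syz(\Theta)\simeq R\oplus R(s)$.

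To identify $s$, use the vanishing of the $t^{n-2}$ coefficient in the proof of Theorem~\ref{Bourideal}, which gives $s=e-d+e_0(\Q)=e_0(\Q)-d$ when $e=0$. That computation only used the Hilbert polynomial additivity on $0\to R(-e)\to\syz(\Theta)\to M_\nu\to0$, so it applies verbatim with $M_\nu=R(s)$. This is consistent with Proposition~\ref{prop:compressibility}, since $e_0(\Q)\le d$ by Theorem~\ref{Maximale0}.

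The main obstacle is the middle step: justifying that the codimension-two part of $\omega_\nu$ is controlled entirely by $\mathcal O_W$, and that degree zero foliations have zero-dimensional singular scheme. For the latter I would cite the classification directly. For the former I would check that $\mathcal Ext^1(\mathcal F,\mathcal O)$ is supported on $W$, because $\mathcal F$ is locally free away from its singular scheme.
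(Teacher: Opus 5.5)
Your proposal is correct and follows essentially the same route as the paper, which also treats the middle column as a degree-zero foliation by curves. The paper uses that this foliation's singular scheme is a single point (citing \cite[Remark 4.4]{correa2023classification} rather than \cite[Theorem 4]{GALEANO2022106840}), and then applies $\deg(R/I_\nu)\le\deg(W)$ to get $\Bour(\Theta)=0$ and split $0\to R\to\syz(\Theta)\to R(s)\to 0$. Your recovery of $s=e_0(\Q)-d$ from the $t^{n-2}$ coefficient only fills in a detail the paper leaves implicit; like the splitting step, it does not need the hypothesis $e\ge 1$ of Theorem~\ref{Bourideal}.
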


\begin{proof}
The foliation by curves on $\mathbb{P}^3$ described previously will be of degree zero, and \cite[Remark $4.4$]{correa2023classification} shows that their singular scheme consists of a single point. Therefore, it follows that $\deg(R/I_\nu) = 0$, and therefore $\Bour(\Theta) = 0$ with $\Theta$ being free. Moreover, from the short exact sequence
$$
0 \to R \to \syz(\Theta) \to R(s) \to 0
$$
we obtain the claimed splitting of $\syz(\Theta)$.
\end{proof}

\begin{Proposition}\label[Proposition]{prop:initial-degree-1-n-4}
Let $R=k[x_1,\ldots,x_4]$ be a polynomial ring over $k$ and $\Theta$ be a $2\times 4$ matrix of rank $2$ in $R$ whose first and second rows consist of homogeneous polynomials of degrees $d_1$ and $d_2$, respectively, such that $\syz(\Theta)$ induces a distribution. If the minimal degree of a syzygy of $\Theta$ is $e = 1$, then $\Bour(\Theta) \leq 2$.
\end{Proposition}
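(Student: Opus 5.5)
Following the proof of \Cref{prop:e-0-foliations}, the plan is to use the comparison $\deg(R/I_\nu)\le \deg(W)$ set up just before that proposition. Here $W$ is the singular scheme of the foliation by curves $\mathcal{F}$ of degree $e$ induced by a minimal syzygy $\nu\colon R(-1)\to\syz(\Theta)$. First we reduce to the case where $\Theta$ is not free; otherwise $\Bour(\Theta)=0$ and there is nothing to prove. Then $I_\nu$ is a proper codimension two ideal by \Cref{Bourideal}(b). We then invoke the classification of degree one foliations by curves on $\mathbb{P}^3$ in \cite[Theorem 4]{GALEANO2022106840}. The middle column of the diagram
\[
R(-1)\to T(-1)\to F
\]
is the tangent sheaf sequence of a degree one foliation by curves, since $\nu$ composed with the Euler quotient is a nonvanishing vector field of degree $e=1$. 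Its singular scheme $W$ is therefore one of the finitely many types listed there.

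The key step is to bound the one-dimensional part of $W$. The classification tells us that the singular scheme of a degree one foliation by curves on $\mathbb{P}^3$ either is zero-dimensional or has one-dimensional part of degree at most $2$. The latter happens, for instance, when the singular locus contains a line, a conic, or a pair of lines. In the exact sequence
\[
\mathcal{O}_W \to \omega_\nu(4-s) \twoheadrightarrow \mathcal{E}xt^3(\mathcal{O}_Z(t),\mathcal{O}_{\mathbb{P}^3}),
\]
the last term is zero-dimensional. So the codimension two part of $\omega_\nu$, whose multiplicity equals $\deg(R/I_\nu)$ because the curve defined by $I_\nu$ is unmixed, is dominated by the one-dimensional part of $\mathcal{O}_W$. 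We conclude that $\Bour(\Theta)=\deg(R/I_\nu)\le \deg(W_1)\le 2$, where $W_1$ denotes the purely one-dimensional component.

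The main obstacle is the case analysis of \cite[Theorem 4]{GALEANO2022106840}. We must check, for each normal form, that the degree of the one-dimensional component of the singular scheme is at most $2$, including any embedded or nonreduced structure. The first thing to try is to compute the singular scheme of each normal form explicitly as the $2\times 2$ minors of the matrix
\[
\begin{bmatrix} x_1 & x_2 & x_3 & x_4\\ v_1 & v_2 & v_3 & v_4\end{bmatrix},
\]
with $v_i$ linear, and read off its top-dimensional degree. For a generic linear vector field the degree drops to zero, since the singular scheme consists of finitely many points. The worst case should be a vector field vanishing along a line with multiplicity two, or along two lines. If some normal form produced a one-dimensional part of degree $3$, we would instead use that $\nu$ is a syzygy of $\Theta$ and not merely of the Euler quotient. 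Concretely, we would check the surjectivity of $\mathcal{O}_W\to\omega_\nu$ on that component to rule it out.
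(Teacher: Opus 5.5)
Your proposal is correct and follows the paper's proof: the paper combines the comparison $\deg(R/I_\nu)\le \deg(W)$, obtained from the dualized sequence $\mathcal{O}_W\to\omega_\nu(4-s)\twoheadrightarrow\mathcal{E}xt^3(\mathcal{O}_Z(t),\mathcal{O}_{\mathbb{P}^3})$, with the bound $\deg(W)\le 2$ read off from \cite[Theorem 4]{GALEANO2022106840}. Your suggested case check only re-verifies that classification. One small correction to your list of examples: the singular locus of a degree one foliation by curves is a union of projectivized eigenspaces of a $4\times 4$ matrix, so its one-dimensional part is a line, a double line, or two skew lines, and a smooth conic cannot occur.
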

\begin{proof}
 From the classification at \cite[Theorem $4$]{GALEANO2022106840}, $\deg(W) \in \{0, 1, 2\}$, and thus $\deg(R/I_\nu) \leq 2$ from the previous considerations.
\end{proof}

We were not able to find examples of matrices $\Theta$ showing that the bound above is sharp, in the sense that $e = 1$ and $\Bour(\Theta) = 2$. As we have mentioned before, this relates to \Cref{quest:Bour-e^2-bound}.
        
\subsection{Semistability of sheaves and bounds for Hilbert coefficient}\label{subsec:semistability-bounds}

In this section, we use simple results from the $\mu$-semistability of sheaves (see \cite{huybrechts2010geometry}) on projective spaces to get a different proof of \Cref{Maximale0}.

Consider the sequence of sheaves on $\mathbb{P}^{n-1}$ induced by a $2\times4$ matrix $\Theta$:
$$
0 \to S \to \mathcal{O}_{\mathbb{P}^{n-1}}^{\oplus 4} \xrightarrow{\Theta} \mathcal{O}_{\mathbb{P}^{n-1}}(d_1) \oplus \mathcal{O}_{\mathbb{P}^{n-1}}(d_2) \to \mathcal{Q} \to 0
$$
where $S = \widetilde{\syz(\Theta)}$ is the sheaf associated to the graded $R$-module $\syz(\Theta)$. In this language, we note that $c_1(\Q) = e_0(\Q)$.

\begin{proof}[Alternative proof for \Cref{Maximale0}]
First, by the additivity of the first Chern class on the exact sequence above, $
c_1(S) = -d + e_0(\Q)$.
Since $\mathcal{O}_{\mathbb{P}^{n-1}}^{\oplus 4}$ is a $\mu$-semistable sheaf with slope $\mu=0$, we obtain an inequality:
$$
\mu(S) \leq \mu(\mathcal{O}_{\mathbb{P}^{n-1}}^{\oplus 4}) \Rightarrow -d+e_0(\Q) \leq 0 \Rightarrow e_0(\Q) \leq d.
$$
If, moreover, $e_0(\Q) = d$, then $\mu(S) = \mu(\mathcal{O}_{\mathbb{P}^{n-1}}^{\oplus 4})$, meaning that $S$ is also a $\mu$-semistable sheaf, and the Jordan-Holder blocks of $S$ must appear as Jordan-Holder blocks from the sheaf $\mathcal{O}_{\mathbb{P}^{n-1}}^{\oplus 4}$. This forces an isomorphism of sheaves $S \simeq \mathcal{O}_{\mathbb{P}^{n-1}}^{\oplus 2}$, showing the second claim.
\end{proof}

\subsection{Nearly free matrices and locally free sheaves}

The following proposition may be thought of as a generalization of the analogous result for Jacobian matrices of normal sequences obtained in \cite[Proposition 19]{monteiro2025}.
\begin{Proposition}
Let $R=k[x_1,\ldots,x_4]$ be a polynomial ring over $k$ and $\Theta$ be a $2\times 4$ matrix of rank $2$ in $R$ whose first and second rows consist of homogeneous polynomials of degrees $d_1$ and $d_2$, respectively. If $\Theta$ is nearly free, then $\syz(\Theta)$ is not locally free at the punctured spectrum.
\end{Proposition}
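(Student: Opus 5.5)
Suppose $\Theta$ is nearly free, so $\Bour(\Theta)=1$. By \Cref{prop:nearly-free-3-syzygy}(a) the Bourbaki ideal $I_\nu$ is generated by two independent linear forms, and the Bourbaki sequence reads
\[
0\to R(-e)\xrightarrow{\nu} \syz(\Theta)\to I_\nu(s)\to 0,\qquad s=e-d+e_0(\Q).
\]
The resulting minimal resolution of $\syz(\Theta)$ is
\[
0\to R(s-2)\to R(s-1)^2\oplus R(-e)\to \syz(\Theta)\to 0.
\]
I will argue by contradiction. Assume $\syz(\Theta)$ is locally free on the punctured spectrum, so $S=\widetilde{\syz(\Theta)}$ is a rank two vector bundle on $\mathbb{P}^3$.

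The key step is to use the Hartshorne--Serre correspondence, or simply the local structure of the Bourbaki sequence. Sheafifying gives
\[
0\to\mathcal{O}(-e)\to S\to \mathcal{I}_L(s)\to 0,
\]
where $L$ is the line defined by $I_\nu$. Since $S$ is locally free, $L$ is the zero scheme of a section of the bundle $S(e)$. Dualizing and taking $\mathcal{E}xt$, exactly as in the proof of \Cref{thm:equigenerated-Bour-bound}, yields a surjection
\[
\mathcal{O}_{\mathbb{P}^3}(e-s)\twoheadrightarrow \mathcal{E}xt^1(\mathcal{I}_L(s),\mathcal{O})\simeq\omega_L(4-s),
\]
because $\mathcal{E}xt^1(S,\mathcal{O})=0$. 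Hence $\omega_L\simeq\mathcal{O}_L(e-4)$, i.e. $L$ is subcanonical. For a line, $\omega_L=\mathcal{O}_L(-2)$, which forces $e=2$ and $c_1(S(e))=2e+s-e... $, i.e. the standard relation $\det S(e)\simeq\mathcal{O}(e+s)$ with $\omega_L=\mathcal{O}_L(e+s-4)$. Either bookkeeping pins down $s+e=2$.

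Next I will compare with the formula $s=e-d+e_0(\Q)$ and with the constraints from local freeness. By \Cref{lem:loc-free-associated-primes}, local freeness implies that the associated primes of $\Q$ other than $\fm$ have height at most $2$. Moreover $c_1(S)=e_0(\Q)-d=s-e$, so $s+e=2$ gives $2e=d-e_0(\Q)+2$. I then plug this into the Bourbaki formula \eqref{MainBourDegFormual} with $\Bour(\Theta)=1$. The resulting identity for $e_1(\Q)$ is checked against the Chern class computation $c_2(S(e))=\deg L=1$. On the other hand, $c_2(S)$ can be computed from the resolution of $\Q$; equivalently, $e_1(\Q)$ and $e_0(\Q)$ determine $c_2(S)$ through the exact sequence $0\to S\to\mathcal{O}^4\to\mathcal{O}(d_1)\oplus\mathcal{O}(d_2)\to\Q\to0$. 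The aim is to show these two computations are incompatible.

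The alternative, which I would try first, is cleaner. A rank two bundle on $\mathbb{P}^3$ given by a line section with $\mathcal{O}(-e)\to S$ must have $S(e)$ with $c_1=2$ and $c_2=1$. This is the null-correlation type or a split bundle $\mathcal{O}\oplus\mathcal{O}(2)$? No: a line is a complete intersection of two planes, so Hartshorne--Serre gives $S(e)\simeq\mathcal{O}(1)^2$ with $c_1=2$, which is split. That contradicts $\Theta$ not being free, since a split $S$ means $\syz(\Theta)$ is free by Horrocks' criterion, as $\syz(\Theta)$ is reflexive and hence equal to $\Gamma_*(S)$.

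The main obstacle is the identification of $\det S$ with $\omega_L$. I must ensure that the $\mathcal{E}xt$ surjection is an isomorphism on the curve, so that the subcanonical constant is forced. This is where the vanishing $\mathcal{E}xt^1(S,\mathcal{O})=0$ from local freeness enters decisively. Once that holds, the uniqueness of the Serre extension of a complete intersection line gives the splitting, and minimality of the resolution in \Cref{prop:nearly-free-3-syzygy}(a) yields the contradiction.
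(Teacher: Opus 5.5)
Your proof works once the bookkeeping is cleaned up, and its final step differs from the paper's.

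\textbf{Corrections.} There are two slips to fix.

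First, dualizing $0\to\mathcal{O}(-e)\to S\to\mathcal{I}_L(s)\to 0$ and using $\mathcal{E}xt^1(S,\mathcal{O})=0$ gives a surjection $\mathcal{O}(e)\twoheadrightarrow\mathcal{E}xt^1(\mathcal{I}_L(s),\mathcal{O})\cong\omega_L(4-s)$. It comes from $\mathcal{O}(e)$, not $\mathcal{O}(e-s)$. This surjection factors through $\mathcal{O}_L(e)$, and a surjection of invertible sheaves on $L\cong\mathbb{P}^1$ is an isomorphism. Hence $\mathcal{O}_L(-2)=\omega_L\cong\mathcal{O}_L(e+s-4)$, so $e+s=2$ directly. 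This settles your ``main obstacle.'' Your intermediate claim that $e=2$ is wrong and should be deleted.

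Second, drop the paragraph comparing $e_1(\mathcal{Q})$ with $c_2(S)$. Those are the same Hilbert-polynomial relations that produce the Bourbaki formula, so they cannot conflict. For instance, they all hold for the free matrix $D_1\mid D_1$, where $\operatorname{Syz}(\Theta)\cong R(-1)^2$, taking $\nu$ of degree $2$ whose cokernel is the ideal of a line; there $e+s=2$.

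\textbf{The actual contradiction.} It is structural, as in your last step. One has $\operatorname{Ext}^1(\mathcal{I}_L(2),\mathcal{O})\cong H^0(\mathcal{O}_L)=k$, using $H^1(\mathcal{O}(-2))=H^2(\mathcal{O}(-2))=0$. The class of $S(e)$ is nonzero because $S(e)$ is locally free, so $S(e)$ is the Koszul extension $\mathcal{O}(1)^2$. Since $\operatorname{Syz}(\Theta)$ has depth $\ge 2$, $\operatorname{Syz}(\Theta)=\Gamma_*(S)\cong R(1-e)^2$. This module is free, and it even has initial degree $e-1<e$, which contradicts nearly freeness. Horrocks' criterion is not needed.

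\textbf{Comparison with the paper.} Both proofs rest on the Hartshorne--Serre correspondence for the line $L$, approached from opposite ends.

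The paper identifies $\operatorname{Ext}^1(\mathcal{I}_L(s),\mathcal{O}(-e))$ with (the dual of) $H^0(\mathcal{O}_L(2-e-s))$. It notes that locally free extensions correspond to nowhere-vanishing sections, and excludes them by asserting $2-e-s=2+d-e_0(\mathcal{Q})\ge 2$. However, substituting $s=e-d+e_0(\mathcal{Q})$ gives $2-e-s=2+d-2e-e_0(\mathcal{Q})$. This can vanish, e.g.\ for $d=e=2$ and $e_0(\mathcal{Q})=0$, and degree zero is exactly the case where nowhere-vanishing sections exist.

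You start from the bundle instead, show that local freeness forces precisely this degree-zero case, and then rule it out via the splitting $S(e)\cong\mathcal{O}(1)^2$. The paper's route is shorter but needs this extra case to be complete. Yours, after the cleanup above, handles it explicitly.
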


\begin{proof}
If $\Theta$ is nearly free, then $\deg(R/I_\nu) = 1$ and $B = L \subset \mathbb{P}^3$ is a projective line. Then, there is a short exact sequence
$$
0 \to \mathcal{O}_{\mathbb{P}^3}(-e) \to S \to \mathcal{I}_L(s) \to 0,
$$
where $s=e-d+e_0(\Q)$. This sequence corresponds to an extension class inside the group $\Ext^1(\mathcal{I}_L(s), \mathcal{O}_{\mathbb{P}^3}(-e))$. Using \cite[Proposition 6.7]{hartshorne2013algebraic} and Serre dualities on $\mathbb{P}^3$ and on $L \simeq \mathbb{P}^1$, we note there are isomorphisms
\begin{align*}
\Ext^1(\mathcal{I}_L(s), \mathcal{O}_{\mathbb{P}^3}(-e)) &\simeq \Ext^1(\mathcal{I}_L, \mathcal{O}_{\mathbb{P}^3}(-e-s))\\
&\simeq \Ext^2(\mathcal{O}_L, \mathcal{O}_{\mathbb{P}^3}(-e-s))\\
&\simeq \Ext^1(\mathcal{O}_{\mathbb{P}^3}(-e-s), \mathcal{O}_L(-4))^{*}\\
&\simeq H^1(\mathcal{O}_L(-4+e+s))^{*}\\
&\simeq H^0(\mathcal{O}_L(2-e-s))^{*} = H^0(\mathcal{O}_L(2+d-e_0(\Q)))^{*}.\\
\end{align*}
The locally free extensions correspond to nowhere vanishing sections on $H^0(\mathcal{O}_L(2+d-e_0(\Q)))$. But since $e_0(\Q) \leq d$, $2+d-e_0(\Q) \geq 2 > 0$, and thus there are no such sections inside this vector space. Thus, we conclude that the extensions are never locally free.
\end{proof}

\end{document}